\newtheorem{lem}{Lemma}[section]
\newtheorem{thm}[lem]{Theorem}
\newtheorem{pro}[lem]{Proposition}
\newtheorem{exa}[lem]{Example}
\newcommand{\slrw}[1]{\stackrel{#1}{\longrightarrow}}
\newcommand{\ttv}{\tau_{\vv}}
\newcommand{\lrw}{\longrightarrow}
\newcommand{\LL}{\Lambda}
\newcommand{\xa}{\alpha}
\newcommand{\xb}{\beta}
\newcommand{\xc}{\gamma}
\newcommand{\xd}{\delta}
\newcommand{\frD}{\mathfrak D}
\newcommand{\caM}{\mathcal M}
\newcommand{\RHom}{\mathbf{R}\strut\kern-.2em\operatorname{Hom}\nolimits}
\newcommand{\zZ}{{\mathbb Z}}
\newcommand{\qv}[2]{Q_{#1}[{#2}]}
\newcommand{\zzv}{{\mathbb Z_{\vv}}}
\newcommand{\zzs}[1]{{\mathbb Z\mid_{#1} }}
\newcommand{\tL}{\widetilde{\LL}}
\newcommand{\ttL}{\widetilde{\widetilde{\LL}}}
\newcommand{\dtL}{\Delta{\LL}}
\newcommand{\vs}{\mathfrak{s}}
\newcommand{\vt}{\mathfrak{t}}
\newcommand{\vu}{\mathfrak{u}}
\newcommand{\bbi}{\bar{i}}
\newcommand{\bbj}{\bar{j}}
\newcommand{\bxb}{\bar{\beta}}
\newcommand{\olL}{\overline{\LL}}
\newcommand{\GG}{\Gamma}
\newcommand{\tG}{\widetilde{\Gamma}}
\newcommand{\trho}{\tilde{\rho}}
\newcommand{\htM}{\wht{M}}
\newcommand{\htL}{\wht{\LL}}
\newcommand{\htQ}{\widehat{Q}}
\newcommand{\htG}{\widehat{\GG}}
\newcommand{\htU}{\bar{U}}
\newcommand{\bw}{\bar{w}}
\newcommand{\bp}{\bar{p}}
\newcommand{\wht}[1]{\widehat{#1}}
\newcommand{\wtt}[1]{\widetilde{#1}}
\newcommand{\wtb}[1]{\widetilde{\check{#1}}}
\newcommand{\tQ}{\widetilde{Q}}
\newcommand{\ttQ}{\widetilde{\widetilde{Q}}}
\newcommand{\ttG}{\widetilde{\widetilde{\GG}}}
\newcommand{\olQ}{\overline{Q}}
\newcommand{\olrho}{\overline{\rho}}
\newcommand{\om}[1]{\Omega^{#1}}
\newcommand{\sk}[1]{^{(#1)}}
\newcommand{\Ext}{\mathrm{Ext}}
\newcommand{\rMod}{\mathrm{Mod}\,}
\newcommand{\rmod}{\mathrm{mod}\,}
\newcommand{\dbA}[2]{\mathfrak D(\widetilde{\mathbf A}_{#1},{#2})}
\newcommand{\dbfA}[2]{\mathfrak D_f(\widetilde{\mathbf A}_{#1},{#2})}
\newcommand{\vv}{\diamond}
\newcommand{\id}{\mathrm{id}\,}
\newcommand{\Ob}{\mathrm{Ob}\,}
\newcommand{\Mor}{\mathrm{Mor}\,}
\newcommand{\caC}{\mathcal{C}}
\newcommand{\dmn}{\mathrm{dim}\,}
\newcommand{\arr}[2]{\begin{array}{#1}#2\end{array}}
\newcommand{\mat}[2]{\left(\begin{array}{#1}#2\end{array}\right)}
\newcommand{\eqqc}[2]{\begin{equation}\label{#1}#2\end{equation}}
\newcommand{\eqqcn}[2]{\[ #2 \]}
\newcommand{\eqqcnn}[2]{$ #2 $}
\newcommand{\Sl}{\mathrm{SL}}
\newcommand{\Gl}{\mathrm{GL}}
\begin{document}

\title{Multi-layer quivers and higher slice algebras}
\author[Guo, Hu and Luo]{Jin Yun Guo, Yanping Hu and Deren Luo}
\address{Jin Yun Guo, Yanping Hu\\ MOE-LCSM, School of Mathematics and Statistics\\ Hunan Normal University\\ Changsha, Hunan 410081, P. R. China\\ Deren Luo \\School of Mathematics \\ Hunan Institute of Science and Technology\\Yueyang, Hunan 414000, P. R. China}

\email{gjy@hunnu.edu.cn(Guo), 1124320790@qq.com(Hu), luoderen@126.com(Luo)}
\thanks{This work is partly supported by National Natural Science Foundation of China \#12071120, \#11901191, %luoderen
\#11771135, %liliping
\#11671126, \#11271119, %too old
and the Construct Program of the Key Discipline in Hunan Province.
Guo appreciates Zhenxing Di for bringing his attention to the diagrams of  categories and for the references.}

\begin{abstract}
In this paper, we introduce multi-layer quiver and show how to construct an $(n+1)$-slice algebras of infinite type from an $n$-slice algebra of infinite type using the bound quivers.
This leads to  constructing $(n+1)$-slice algebras of infinite type as matrix algebra and as tensor algebra of an $n$-slice algebra and equivalences of their module categories as the module categories of diagram of some quiver of type $\widetilde{A}_{n+1}$.
\end{abstract}

%\keywords{$n$-properly-graded quiver, multi-layer quiver, nicely-graded quiver, $n$-representation-infinite algebra}

%\subjclass{Primary {16G20}; Secondary{16G70, 16S37}}

\maketitle

\section{Introduction\label{int}}

In \cite{hio12}, Hershend, Iyama and Oppermann introduce $n$-hereditary algebras which include $n$-representation-finite algebras and $n$-representation-infinite algebras.
The $n$-representation-finite algebras are studied extensively \cite{i11,io11,hi11,hz11,c16,p17,jk19,di20}.
In \cite{i11}, Iyama introduces cone construction for certain $n$-complete algebras, in this way, he gets  a tower of higher representation-finite algebras and higher complete algebras starting with a given one.
Such construction is also obtained for a class of higher representation-finite algebras in \cite{gl16} using $n$-translation algebras.

It is natural to study the relationship between the higher representation-infinite algebras of different dimensions, and it is known that tensor product produces certain higher hereditary algebra of the same type \cite{hio12,hi11}.
The $n$-slice algebras are exactly the $n$-hereditary algebras whose  $(n+1)$-preprojective algebras are $(q,n+1)$-Koszul algebras, and $n$-slice algebras of infinite type are $n$-representation-infinite algebras\cite{gh21b}.
In this paper, we introduce the multi-layer quiver and apply this construction to produce $(n+1)$-slice algebra of infinite type from an $n$-slice algebra of infinite type.

For an $n$-slice algebra $\GG$, we relate it with three other algebras, its $(n+1)$-preprojective algebra $\Pi(\GG)$, its quadratic dual $\LL$ with bound quiver $Q$ and (twisted) trivial extension $\tL = \dtL$ of $\LL$, these four algebras are related by Koszul duality, trivial extension and higher preprojective constructions (See Theorem 6.6 of \cite{g20}).
The bound quiver $Q$ of $\LL$ is an  $n$-properly-graded quiver, which is a bound quiver with quadratic relation such that maximal bound paths have the same length $n$, and the bound quiver $Q^{\perp}$ of $\GG$ is an  $n$-slice(see Section \ref{pre} for the details).

Quivers have played very important role in the representation theory, especially the Gabriel quivers for the algebras and Auslander-Reiten quivers for the module categories \cite{ars95,ass06,r84}.
Recently, Iyama introduce higher Auslander-Reiten theory to study higher dimensional representation theory of algebras  \cite{i007,i07}, quivers are also important in higher dimensional representation theory in describing algebras and certain module categories \cite{i11,io11,gl16,gll19b}.
Quivers are used extensively in studying  $n$-slice algebras, especially for generalizing results of hereditary algebras \cite{gll19b,gx21}.

The ideal of the multi-layer quiver construction comes from the phenomenon of returning arrows.
Such phenomenon were observed in \cite{g11} for the McKay quivers when  embedding the finite subgroup $\Gl(n,\mathbb C) $ to $\Sl(n+1,\mathbb C) $, in \cite{g16} when constructing trivial extension for an $n$-translation algebra and in \cite{gyz14} for constructing one dimensional higher Koszul AS-regular algebra from a given one using trivial extension of self-injective algebra and Koszul duality.
It is observed that returning arrow quiver has the feature of 'increasing the dimension' by one in certain cases \cite{g11,gyz14}.

For an $n$-properly-graded quiver $Q$, let $\LL$ be the $n$-properly-graded algebra defined by $Q$,  we first construct the trivial extension $\tL=\dtL$ of $\LL$ with the returning arrow quiver $\tQ$ of $Q$ as its bound quiver.
Then construct the trivial extension $\ttL = \Delta \tL$ of $\tL$ and get a returning arrow quiver $\ttQ$ of $\tQ$.
Now construct an smash product $\ttL\#' k \mathbb Z^*$ with the universal covering quiver $\zZ_{\vv}\ttQ$, which is a stable $(n+1)$-translation quiver with $(n+1)$-translation $\tau_{\vv}$,  as its bound quiver.
The complete $\tau_{\vv}$-slices in $\zZ_{\vv}\ttQ$ are an $(n+1)$-properly-graded algebra whose quadratic dual is an $(n+1)$-slice algebra of infinite type.
We find one, denoted by $\htQ$ and called  multi-layer quiver of $Q$, which can be read out directly from $Q$ when  $Q$ is nicely-graded (See Section \ref{pre} for definition).
The algebra $\htL$ defined by the bound quiver $\htQ$ is an $(n+1)$-properly-graded algebra and its quadratic dual $\htG$ is a nicely-graded $(n+1)$-slice algebra.

With the multi-layer quivers, we see how the algebras $\GG$ and $\htG$ (respectively, $\LL$ and $\htL$) are related, as triangulate matrix algebras and as tensor algebras.
We show that their representation theory are related using the representation theory of diagram of quivers.
The diagrams of a small categories are used recently in representation theory  and related researches \cite{dlly22b,dllx21b,es17,gk22,m20}.
Using  the multi-layer quiver, we can describe the $(n+1)$-slice algebra $\htG$ as a triangulate matrix algebra with diagonal entries in $\GG$ (Theorem \ref{qdual}),  and as a tensor algebra of certain bimodules over $\GG^{n+1}$ (direct sum of $n+1$ copies of $\GG$) (Theorem \ref{multidualalgtensor}).
We also find the module category of $\htG$ is equivalent the module category of certain diagram of a quiver of type $\tilde{A}_{n+1}$ with only one arrow in the opposite direction, assigning to each vertex the category of $\GG$-modules.
We leave further study of the representation theory of the multi-layer algebras in the future.

For higher slice algebra $\GG$ of infinite type, using the multi-layer quiver construction, with $\GG(0) = \GG$ and $\GG(t+1) = \widehat{\GG(t)}$, we get an iterated way of constructing a tower of higher slice algebras of infinite type.
It is interesting to see that such construction is entirely different from the tower of higher representation-finite  algebras constructed in \cite{i11,gl16}.
We also show by examples that multi-layer construction is not close for $n$-slice algebra of finite type, and that tensor product construction in \cite{hio12} is not close for higher slice algebra of infinite type.

Though the idea is simple, we need tedious work dealing with bound quivers, which occupies a large part of the paper.

The paper is organized as follow.
In Section \ref{pre}, we recall $n$-properly-graded quiver and the returning arrow quiver for an $n$-properly-graded quiver.
Nicely-graded quiver and $n$-nicely-graded quiver are introduced.
In Subection \ref{stbq}, the $n$-slice algebra is introduced.
The first and the second $\zZ Q$ type construction of an $n$-properly-graded quiver and related algebras are recalled.
Concepts and constructions related to stable $n$-translation quivers needed in this paper are recalled.
Multi-layer quiver for an $n$-nicely-graded quiver is defined in Section \ref{mulq}, with easy consequences listed and a simple example presented.
In Section \ref{nicelygraded}, we show that nicely-grading is invariant under the higher $\zZ Q$-construction, and for a nicely-graded quiver $Q$, the first $\zZ Q$ type construction $\zzs{n-1}Q$ is  a connected component in the second $\zZ Q$ type construction $\zzv\tQ$.
In Section \ref{mqtslice},  the complete $\tau_{\vv}$-slice of $\zZ_{\vv}\ttQ$ for an $n$-nicely-graded quiver $Q$ is discussed, and the multi-layer quiver $\htQ$ is shown to be $(n+1)$-nicely-graded quiver by showing that it is a complete $\ttv$-slice in $\zZ_{\vv}\ttQ$.
In Section \ref{algebras}, we describe the multi-layer algebras as matrix algebras and as tensor algebras.
We also point out that the modules of the multi-layer algebra of an algebra can be regarded as the modules of the diagram of a quiver of type $\tilde{A}_{n+1}$ with only one arrow in the opposite direction.
In Section \ref{inftype}, we apply the multi-layer construction to discuss $(n+1)$-slice algebra constructed from an $n$-slice algebra of infinite type using multi-layer quiver.

This paper is an extended and generalized version of the corresponding results in '$\tau$-slice algebras of $n$-translation algebras and quasi $n$-Fano algebras,' arXiv:1707.01393, which is discontinued.

\section{preliminary\label{pre}}

\subsection{Algebras and bound quivers}
Throughout this paper, $k$ is a fixed field.
A bound quiver $Q= (Q_0,Q_1, \rho)$ is a triple of the vertex set $Q_0$, the arrow set $Q_1$ and a relation set $\rho$ which is a set of $k$-linear combinations of paths in $Q$.
In this paper the vertex set $Q_0$ may be infinite and the arrow set $Q_1$ is assumed to be locally finite.
The arrow set $Q_1$ is usually defined with two maps $\vs, \vt$ from $Q_1$ to $Q_0$ to assign an arrow $\alpha$ its starting vertex $\vs(\alpha)$ and its terminating vertex $\vt(\alpha)$.
We also write $\vs(p)$ for the starting vertex and $\vt(p)$ for its terminating vertex of a path $p$ in $Q$.

A bound quiver is related to an algebra over $Q$, that is, the quotient algebra $k Q/ (\rho)$ of the path algebra $kQ$ modulo the ideal $(\rho)$ generated by the relation set $\rho$.
A path $p$ in $Q$ is called a {\em bound path} if its image in $kQ/(\rho)$ is non-zero.

Let $\LL = \LL_0 + \LL_1+\cdots$ be a graded algebra over $k$ with $\LL_0$ direct sum of copies of $k$ such that $\LL$ is generated by $\LL_0$ and $\LL_1$.
Such algebra is determined by a bound quiver $Q= (Q_0,Q_1, \rho)$ \cite{g16}.

Let $S=\LL_0 = \bigoplus\limits_{i\in Q_0} k_i$, with $k_i \simeq k$ as algebras, and let $e_i$ be the image of the identity of $k$ under the canonical embedding of the $k_i$ into $S$.
Then $\{e_i\mid i \in Q_0\}$ is a complete set of orthogonal primitive idempotents in $S$ and $V= \LL_1 = \bigoplus\limits_{i,j \in Q_0 }e_j \LL_1 e_i$ as $S $-$ S $-bimodules.
Fix a basis $Q_1^{ij}$ of $e_j \LL_1 e_i$ for any pair $i, j\in Q_0$, take the elements of $Q_1^{ij}$ as arrows from $i$ to $j$, and let $Q_1= \cup_{(i,j)\in Q_0\times Q_0} Q_1^{ ij}.$
Let $Q_t$ be the set of the paths of length $t$ in $Q$ and let $kQ_t$ be the $k$-space spanned by $Q_t$, then $\LL_t = kQ_t/(\rho)_t$, here $(\rho)_t$ is the subspace of $(\rho)$ spanned by the linear combination of the paths of length $t$.
The relation set $\rho$ is a set of linear combinations of paths of length $\ge 2$.
We assume that it is normalized such that each element in $\rho$ is a linear combination of paths starting at the same vertex and ending at the same vertex, that is $\rho = \cup_{i,j\in Q_0} \rho(i,j)$,  where $\rho(i,j)$ is the set of relations which are a linearly combination of paths from $i$ to $j$.
We also assume that these paths are of the same length since we deal with graded algebra.

\medskip

Let $Q= (Q_0, Q_1, \rho)$ be an bound quiver and $\LL$ be the algebra given by the bound quiver $Q$.
The quiver $Q$ is called {\em acyclic} if $Q$ contains no oriented cycle, the algebra $\LL\simeq kQ/(\rho)$ is called {\em acyclic} if its quiver $Q$ is acyclic.
$Q$ is called {\em quadratic bound quiver} if relations are  quadratics, that is $\rho \subset kQ_2$.
If $Q= (Q_0, Q_1, \rho)$ is  a quadratic bound quiver, the quotient algebra $\LL = kQ/(\rho)$ is called a {\em quadratic algebra.}
The quadratic dual quiver $Q^{\perp} = (Q_0, Q_1, \rho^\perp)$ has the same vertex set $Q_0=Q_0$ as $Q$, has the dual basis  $Q_1 $ of $Q_1$ as arrow set, and has a basis $\rho^{\perp}$ in the orthogonal subspace $(k\rho)^{\perp} \subseteq k Q_2$ of $k \rho$ as relations when the dual space of $kQ_2$ is identified with $kQ_2$ itself (see \cite{g20}).
The algebra $\LL^{!,op} = k Q^{\perp} /(\rho^{\perp})$ is called the {\em quadratic dual} of $\LL$.

\subsection{The $n$-properly-graded quiver and nicely-graded quiver}
A bound quiver $Q$ is called {\em $n$-properly-graded} if all the maximal bound paths in $Q$ have the same length $n$.

A sequence  $w=(p_{0},\ldots,p_r) $ of paths in $Q$ with $l(p_t)>0$ for $t=1, \ldots, r-1$, such that $\vt(p_{2h+1})=\vt(p_{2h})$, $\vs(p_{2h+1}) = \vs(p_{2h+2})$ for $h = 0, \ldots, [\frac{r}{2}]$ is called a {\em walk} in $Q$ from $i=\vs(p_0)$ to $j=\vt(p_r)$, conventionally  $r$ is assumed to be even and $p_r$ is allowed to be a trivial path if $r$ is even, and write $i=\vs(w)$ and $j=\vt(w)$.
A walk $w$ is {\em cyclic} if $\vs(w)=\vt(w)$, in this case, we assume that $r=2r'+1$ is odd, and $l(p_t)>0$ for $t=0, \ldots, r$.
Define $\vu_{i,w}(j) = \sum_{h=0}^r (-1)^hl(p_h)$, and call it {\em the $(i,w)$-grade} of $j$ in $Q$. This is the difference of the number of arrows in the opposite directions passed when one goes from $i$ to $j$ along the walk $w$.

We have the following obvious result.
\begin{pro}\label{cyclezero}For any two vertices $i,j$ in $Q$, we have
\begin{enumerate}

\item\label{grdcyc} The $(i,w)$-grade of $j$ are independent of the walk $w$ if and only if for any cyclic walk $w'$ with $\vs(w')$ on $w$,  $\vu_{\vs(w'),w'}(\vt(w'))=0$.

\item\label{degcyc} If $Q$ is connected and $i$ is a vertex of $Q$, then for any vertex $j$ of $Q$, $\vu_{i,w}(j)$ is independent of $w$ if and only if for any cyclic walk with $\vs(w')=\vt(w')=i$, $\vu_{i,w'}(i)=0$.
\end{enumerate}
\end{pro}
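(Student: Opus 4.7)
The plan is to reduce both parts to two elementary properties of the signed length $\vu$: additivity under concatenation of walks and antisymmetry under reversal. I would first verify that given walks $w_1$ from $i$ to $k$ and $w_2$ from $k$ to $j$, one can form a concatenated walk $w_1 \cdot w_2$ from $i$ to $j$ satisfying
\[
\vu_{i, w_1 \cdot w_2}(j) \;=\; \vu_{i, w_1}(k) + \vu_{k, w_2}(j),
\]
and that the reversed walk $w^{-1}$ from $j$ to $i$ satisfies $\vu_{j, w^{-1}}(i) = -\vu_{i, w}(j)$. Both facts follow by direct inspection of the alternating signs in the defining sum $\sum_h (-1)^h l(p_h)$, once one checks that the walk obtained by gluing (respectively by reading backwards) still respects the required alternation of forward and backward paths.

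For part (\ref{grdcyc}), the ``only if'' direction inserts a given cyclic walk $w'$ at its base point $k = \vs(w')$, which by hypothesis lies on a walk $w$ from $i$ to $j$. Splitting $w = \alpha \cdot \beta$ at $k$, the new walk $\alpha \cdot w' \cdot \beta$ is again a walk from $i$ to $j$, and additivity gives $\vu_{i,\alpha\cdot w'\cdot\beta}(j) = \vu_{i,w}(j) + \vu_{k,w'}(k)$; independence of $w$ then forces $\vu_{k,w'}(k) = 0$. For the ``if'' direction, given two walks $w_1, w_2$ from $i$ to $j$, the composite $w' = w_1 \cdot w_2^{-1}$ is a cyclic walk with $\vs(w') = i$ lying on $w_1$, so the hypothesis applied with $w = w_1$ gives $\vu_{i,w'}(i) = 0$, and combining additivity with antisymmetry immediately yields $\vu_{i,w_1}(j) = \vu_{i,w_2}(j)$.

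Part (\ref{degcyc}) is then handled analogously and is a little simpler. The forward direction compares an arbitrary cyclic walk at $i$ with the trivial walk of grade $0$; the converse pairs any two walks $w_1, w_2$ from $i$ to an arbitrary vertex $j$ (whose existence is guaranteed by connectedness of $Q$) into the cyclic walk $w_1 \cdot w_2^{-1}$ at $i$, whose vanishing grade yields equality of $\vu_{i,w_1}(j)$ and $\vu_{i,w_2}(j)$. The main technical obstacle throughout is the parity bookkeeping needed to make concatenation precise: the walk convention forces the index $r$ in $(p_0, \ldots, p_r)$ to be even for ordinary walks (with $p_r$ allowed trivial) and odd for cyclic walks, so at the joining vertex one must sometimes absorb or insert trivial paths, and when reversing a walk one must decide how to reindex so that the alternation of forward and backward traversals is maintained. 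Once this bookkeeping is in place, the sign computations behind additivity and antisymmetry are entirely routine.
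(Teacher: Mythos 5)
The paper states Proposition \ref{cyclezero} without proof (it is introduced as ``the following obvious result''), and your argument --- additivity of $\vu$ under concatenation of walks, antisymmetry under reversal, then inserting a cyclic walk at an interior vertex for the ``only if'' direction and forming $w_1\cdot w_2^{-1}$ for the ``if'' direction --- is exactly the standard argument the authors leave implicit. It is correct, and your explicit acknowledgement of the parity and trivial-path normalizations needed at the junction vertices (merging adjacent same-direction paths, absorbing trivial ones, which all leave the signed sum $\sum_h(-1)^h l(p_h)$ unchanged) covers the only point where any care is required.
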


A quiver with $\vu_{i,w}(i) =0$ for any vertex $i$ and any cyclic walk starting at $i$ is called a {\em nicely-graded quiver} in this paper.
In this case, if $Q$ is connected, for any $i,j \in Q_0$, write $\vu_{i}(j)= \vu_{i,w} (j) $ for any walk from $i$ to $j$, since $\vu_{i,w}(j)$ is independent of the choice of $w$.
Clearly, we have that if $\vu_{i_0,w}(i_0) =0$ for a vertex $i_0$ in the walk $w$, then $\vu_{i,w}(i) =0$ for any vertex $i$ in the walk $w$.

The following result follows from Proposition \ref{cyclezero}.
\begin{pro}\label{nicegrd}
Let $Q$ be a connected quiver.
Then $Q$ is nicely-graded if and only if there is a map $\vu$ from $Q_0$ to $\zZ$ such that $\vu(j)=\vu(i)+1$ if there is an arrow $\xa:i\to j$.
\end{pro}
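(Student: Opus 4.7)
The plan is to prove both implications separately, with the converse direction being a short telescoping calculation and the forward direction a base-point construction that invokes Proposition~\ref{cyclezero}(\ref{degcyc}).

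For ``$\Leftarrow$'', I would suppose that such a $\vu$ exists and examine an arbitrary walk $w=(p_0,\ldots,p_r)$ from $i$ to $j$. Rewriting each summand $(-1)^h l(p_h)$ of $\vu_{i,w}(j)$ arrow-by-arrow, each arrow $\xa$ occurring in $p_{2h}$ contributes $+1=\vu(\vt(\xa))-\vu(\vs(\xa))$ and each arrow in $p_{2h+1}$ contributes $-1=\vu(\vs(\xa))-\vu(\vt(\xa))$; in either case the contribution is the signed $\vu$-difference across $\xa$ in the direction of traversal. The matching conditions $\vt(p_{2h+1})=\vt(p_{2h})$ and $\vs(p_{2h+1})=\vs(p_{2h+2})$ then force the intermediate $\vu$-values to cancel, so the whole sum telescopes to $\vu_{i,w}(j)=\vu(j)-\vu(i)$. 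Specializing to cyclic walks at $i$ gives $\vu_{i,w}(i)=0$, so $Q$ is nicely-graded.

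For ``$\Rightarrow$'', connectedness of $Q$ lets me fix a base vertex $i_0\in Q_0$ reachable from every other vertex by a walk. For each $j\in Q_0$ I choose some walk $w_j$ from $i_0$ to $j$ and set $\vu(j):=\vu_{i_0,w_j}(j)$. The crux of the argument is well-definedness, and this follows directly from Proposition~\ref{cyclezero}(\ref{degcyc}): the nicely-graded hypothesis says precisely that every cyclic walk based at $i_0$ has grade zero, which is exactly the condition guaranteeing that $\vu_{i_0,w}(j)$ is independent of the chosen walk $w$. The arrow condition then follows by appending a forward traversal of $\xa\colon i\to j$ to a walk from $i_0$ to $i$ (inserting a trivial path if the alternating parity demands it), which increases the signed sum by exactly $1$; hence $\vu(j)=\vu(i)+1$.

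The main obstacle I anticipate is purely notational: formally reversing, concatenating, or extending walks in the alternating $(p_0,\ldots,p_r)$ format requires some bookkeeping to stay consistent with the convention that $p_t$ for $t$ odd is traversed against its orientation and that $p_r$ may be trivial when $r$ is even. None of this is mathematically deep, but it must be done carefully to match the paper's definitions.
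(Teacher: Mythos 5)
Your proof is correct and follows exactly the route the paper intends: the paper gives no written proof but derives the proposition from Proposition~\ref{cyclezero}, and your forward direction is precisely that deduction (base vertex, well-definedness via Proposition~\ref{cyclezero}(\ref{degcyc}), then append an arrow), while your converse is the standard telescoping computation. No gaps.
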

So our definition of nicely-graded quiver coincide with the one given in \cite{g20}.
Clearly, a nicely-graded quiver is acyclic.
A bound quiver $Q= (Q_0,Q_1, \rho)$ is called an {\em $n$-nicely-graded} if  it is both nicely-graded and $n$-properly-graded.

\subsection{Stable $n$-translation quivers and related notions\label{stbq}}

Let $\caM$ be a maximal linearly independent set of maximal bound paths in $Q$, and let $\caM^*$ be a dual basis in the dual space of the space spanned by $\caM$.
Let $\tQ$ be the returning arrow quiver of $Q$, that is, the quiver obtained from $Q$ by adding an returning arrow $\beta_p: \vt(p) \to \vs(p)$ for each path $p$ in $\caM$.
Write $Q_{1,\caM} =\{\beta_p: \vt(p) \to \vs(p)\mid  p\in \caM\}$.
Let $\rho_{\caM} = \cup_{v,v'\in Q_0}\, \rho_{\caM}(v,v')$, where $\rho_{\caM}(v,v')$ the set of  linear combinations $\sum_{f\in \caM} c_f x_f \beta_f y_f$ of paths $x_f \beta_f y_f$ such that $c_f \in k$, $x_f, y_f$ are  paths with $\vt(x_f)=v'$, $\vs(y_f)=v$ satisfying $\sum_{f\in \caM} f^*( c_f y_f p x_f) =0$ for all paths $p$ from $\vt(x_f)$ to $\vs(y_f)$.

Clearly, we have that $\sum_{f\in \caM} c_f x_f f^* y_f=0$ if and only if $\sum_{f\in \caM} c_f x_f \beta_f y_f\in\rho_{\caM}$.
So $D\LL$ is a $\LL$-bimodule isomorphic to the $\LL$-bimodule with generator set $Q_{1,\caM}$ and relation set $\rho_{\caM}$.
And the trivial extension $\dtL$ of $\LL$ is the algebra with bound quiver $\tQ = (\tQ_0, \tQ_1,\trho)$, where $\tQ_0=Q_0, \tQ_1 = Q_1\cup Q_{1,\caM}$ and $\trho =\rho\cup \rho_{\caM}\cup \{\xb_p\xb_{p'}\mid p,p'\in \caM\}$, called a {\em returning arrow quiver} of $Q$ (see \cite{g20}).

The relations in $\rho_{\caM}$ have nice forms when the trivial extension is quadratic.

\begin{lem}\label{relzero} If $\dtL$ is quadratic, then the elements in $\rho_{\caM}$ are of the form $$\sum_{t} a_{\xa,t} \beta_{p_t} \xa +\sum_{t} b_{t,\xa'} \xa'\beta_{p'_t},$$ for some $a_{\xa,t}, b_{t,\xa'}\in k$.
\end{lem}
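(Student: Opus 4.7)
The plan is to exploit the natural $\zZ$-grading on $\dtL$ in which every arrow of $\tQ$ (both the original arrows in $Q_1$ and the returning arrows in $Q_{1,\caM}$) has degree $1$, together with the fact that quadraticness of $\dtL$ forces the essential relations to lie in degree $2$.

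First I would note that, by the normalization convention for $\rho$ stated in the preliminaries, every element of $\rho_{\caM}$ may be taken to be a homogeneous linear combination of paths of a fixed length in $\tQ$, all starting at a common vertex $v$ and ending at a common vertex $v'$. Hence $\rho_{\caM}$ decomposes into homogeneous pieces, and an arbitrary element splits into its homogeneous components, each of which still satisfies the defining identity $\sum_{f}f^{*}(c_f y_f p x_f)=0$ for every path $p$ from $\vt(x_f)$ to $\vs(y_f)$. It therefore suffices to analyse a single homogeneous element.

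Next, since $\dtL$ is assumed quadratic, any relation in $\trho$ of total length strictly greater than $2$ is a consequence of the degree-$2$ elements of $\trho$, so modulo redundancy we may restrict to homogeneous elements of $\rho_{\caM}$ of length $2$. A typical path $x_f\beta_f y_f$ has length $l(x_f)+1+l(y_f)$ in $\tQ$, so the quadratic condition forces $l(x_f)+l(y_f)=1$. Exactly one of two cases then occurs: either $l(x_f)=1$ and $l(y_f)=0$, in which case $x_f=\xa'$ is an arrow of $Q_1$ and $y_f=e_{\vs(f)}$ is trivial, contributing a summand of the form $\xa'\beta_f$; or $l(x_f)=0$ and $l(y_f)=1$, in which case $x_f=e_{\vt(f)}$ is trivial and $y_f=\xa$ is an arrow of $Q_1$, contributing $\beta_f\xa$. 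Regrouping the two types, letting $\xa$ and $\xa'$ range over the appropriate arrows and indexing the associated maximal bound paths by $t$, yields the claimed expression $\sum_{t}a_{\xa,t}\beta_{p_t}\xa+\sum_{t}b_{t,\xa'}\xa'\beta_{p'_t}$.

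The main obstacle is to make the reduction step of the second paragraph rigorous within the bimodule framework used to define $\rho_{\caM}$: one must check that a homogeneous relation $\sum_f c_f x_f\beta_f y_f$ with $l(x_f)+l(y_f)\ge 2$ is genuinely a consequence of $\rho$, of the quadratic part of $\rho_{\caM}$, and of the nilpotency relations $\xb_p\xb_{p'}$. The key point is that the defining condition $\sum_f f^{*}(c_f y_f p x_f)=0$ is stable under factoring off a single arrow on either side of $\beta_f$; this lets one peel off an arrow to reduce $l(x_f)+l(y_f)$ by one while preserving membership in $\rho_{\caM}$, and iteration brings every element down to a degree-$2$ representative. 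Once this peeling argument is in place, the elementary case analysis above completes the proof.
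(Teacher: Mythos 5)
Your proof is correct and follows the same route the paper intends: the paper states this lemma without any proof, treating it as immediate from the definition of quadratic, and your argument simply unpacks that — a relation set for a quadratic algebra may be taken in degree two, and a degree-two path $x_f\beta_f y_f$ forces $l(x_f)+l(y_f)=1$, giving exactly the two shapes $\beta_{p}\xa$ and $\xa'\beta_{p'}$. The ``peeling argument'' you flag as the main obstacle is not actually needed, since the hypothesis that $\dtL$ is quadratic already asserts that its relation ideal is generated by its degree-two part, so the higher-degree elements of $\rho_{\caM}$ are consequences by definition rather than by a separate reduction.
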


We also get a returning arrow quiver when take the relation set of a twisted trivial extension of $\LL$ for the bound quiver.
The following proposition follows directly from the definition of  returning arrow quiver.

\begin{pro}\label{boundpath_rq} Assume that $Q$ is $n$-properly-graded quiver.
Then each maximal bound path in a returning arrow quiver $\tQ$ is of length $n+1$ and contains at most one returning arrow.
\end{pro}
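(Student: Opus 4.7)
The plan is to identify $\dtL$ with the trivial extension $\LL \ltimes D\LL$ of $\LL = kQ/(\rho)$ (as recorded in the paragraph preceding Lemma~\ref{relzero}) and then read both claims off the decomposition $\dtL = \LL \oplus D\LL$ with $(D\LL)^2 = 0$, together with the hypothesis that $Q$ is $n$-properly graded. The ``at most one returning arrow'' half is then immediate: any bound path in $\tQ$ represents a nonzero element of $\dtL$, while a path traversing two returning arrows $\beta_p, \beta_{p'}$ (whether adjacent or separated by arrows of $Q_1$) lies in $\LL \cdot D\LL \cdot \LL \cdot D\LL \cdot \LL \subseteq (D\LL)^2 = 0$, a contradiction.

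For the length upper bound, I would observe that a bound path $p_1 \beta_f p_2$ with exactly one returning arrow corresponds under $\dtL = \LL \oplus D\LL$ to the element $p_1 \cdot f^* \cdot p_2$, which, via the bimodule action $(a\phi b)(c) = \phi(bca)$ on the graded dual, lies in $D\LL_{n - l(p_1) - l(p_2)} = (\LL_{n - l(p_1) - l(p_2)})^*$. For nonvanishing we need $n - l(p_1) - l(p_2) \ge 0$, so the total length $l(p_1) + 1 + l(p_2)$ is at most $n+1$. A bound path in $\tQ$ with no returning arrow is simply a bound path in $Q$, hence of length $\le n$. Either way, every bound path in $\tQ$ has length at most $n+1$.

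For the matching lower bound I would show that every bound path of length $\le n$ extends. If the path lies entirely in $Q$, extend it inside $Q$ (possible by $n$-proper gradedness) to a maximal bound path $q$ of length $n$, pick any $f \in \caM$ appearing with nonzero coefficient in the $\caM$-basis expansion of $q$, and note that $q \beta_f$ represents a nonzero element of $D\LL_0$. If the path $p_1 \beta_f p_2$ already uses a returning arrow but has strictly positive $D\LL$-degree $\ell = n - l(p_1) - l(p_2) \ge 1$, then its class is a nonzero functional on $\LL_\ell$, hence nonzero on some basis path $h = h' \gamma$ with last arrow $\gamma \in Q_1$; appending $\gamma$ on the appropriate side yields a longer bound path. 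Iterating terminates at length $n+1$, proving the claim.

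The main obstacle will be the routine bookkeeping in this last paragraph---handling single paths that are only $k$-linear combinations of elements of $\caM$, and keeping the $\LL$-bimodule action on $D\LL$ consistent with the arrow-multiplication in $\tQ$. Both reduce to the non-degeneracy of the pairing $\LL_\ell \otimes (\LL_\ell)^* \to k$, so no quadraticity hypothesis (unlike that in Lemma~\ref{relzero}) is required.
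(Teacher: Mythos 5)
Your proof is correct and takes essentially the approach the paper intends: the paper gives no written argument, asserting only that the proposition ``follows directly from the definition of returning arrow quiver,'' and your argument is exactly that definition unwound --- identifying $k\tQ/(\trho)$ with the trivial extension $\LL\ltimes D\LL$, using $(D\LL)^2=0$ to bound the number of returning arrows, and using the grading of $D\LL$ together with non-degeneracy of the pairing $\LL_\ell\otimes(\LL_\ell)^*\to k$ to pin the maximal length at $n+1$. No gaps.
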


To study the multi-layer quiver of an $n$-nicely-graded quiver, we need some infinite stable $n$- and $(n+1)$-translation quivers.
An algebra defined by an $n$-properly-graded bound quiver will be called an {\em $n$-properly-graded
algebra} and an algebra defined by an $n$-nicely-graded bound quiver will be called an {\em $n$-nicely-graded algebra}.

The $n$-translation quivers and $n$-translation algebras are introduced in \cite{g16}.
The bound quivers of  graded self-injective algebras of Loewy length $n+2$ are exactly  the  stable $n$-translation quivers, with the $n$-translation $\tau$ induced by the Nakayama permutation.
Stable $n$-translation quiver is called stable bound quiver of Loewy length $n+2$ in \cite{g12}, each bound path of length $n+1$ goes from $\tau i$ to $i$ for some vertex $i$ in the quiver.

Given an $n$-properly-graded quiver $Q$ with a maximal linearly independent set $\caM$ of maximal bound paths in $Q$, let $\tQ$ be the returning arrow quiver of $Q$.
We have related to $Q$ some infinite bound  quivers $\zZ\mid _{n-1} Q$ (\cite{g16,g20}) and $\zZ_{\vv} \tQ$ (\cite{g12}, denoted by $\olQ$ there).

Recall that the quiver of $\zZ\mid _{n-1} Q$ is defined as the quiver  with the vertex set $$(\zZ\mid _{n-1} Q)_0 =\{(i , t)\mid  i\in Q_0, t \in \zZ\},$$ the arrow set $$\arr{lll}{(\zZ\mid _{n -1}Q)_1 & =& \zZ \times Q_1 \cup \zZ \times Q_{1,\caM} \\ & = &\{(\alpha,t): (i,t)\longrightarrow (j,t) \mid  \alpha:i\longrightarrow j \in Q_1, t \in \zZ\} \\ && \cup \{(\beta_p , t): (j, t) \longrightarrow (i, t+1) \mid  p\in \caM, \vs(p)=i,\vt(p)=j  \},}$$
and the relation set
$$\rho_{\zZ\mid _{n-1} Q} = \zZ \rho\cup \zZ \rho_{\caM} \cup \zZ\rho_0,$$ where $$\zZ \rho =  \{\sum_{*} a_* (\xa_s,t)\cdots (\xa_1,t) \mid a_*\in k, \sum_{*} a_* \xa_s \cdots \xa_1 \in \rho, t\in \zZ\},$$ $$\zZ \rho_{0} =  \{(\beta_{p'},t+1)(\beta_p ,t)\mid  \beta_{p'} \beta_{p}\in \rho_{\caM}, t\in \zZ\}$$ and $\zZ \rho_{\caM}$ is the relations for the bimodule $D \LL$ (see \cite{g20}), when $\dtL$ is quadratic, we have $$\arr{ll}{\zZ \rho_{\caM}= &\{ \sum_{s'} a_{s'} (\beta_{p'_{s'}},t) (\xa'_{s'}, t) + \sum_{s} b_s (\xa_s,t+1) (\beta_{p_s} ,t) \\ & \quad\mid  \sum_{s'} a_{s'} \beta_{p'_{s'}} \xa'_{s'} + \sum_{s} b_s \xa_s \beta_{p_s} \in \rho_{\caM} , t\in \zZ\}.}$$
The arrows $(\xa,t)$ arising from the arrows of the original quiver will be called {\em arrows of type $\xa$}, and the arrows in the set $\{(\beta_p , t): (j, t) \longrightarrow (i, t+1) \mid  p\in \caM, \vs(p)=i,\vt(p)=j  \}$ will be called {\em arrows of type $\beta$}.

We call $\zzs{n-1}Q$  {\em the first $\zZ Q$ type construction of $Q$}.
The following proposition follows directly from Proposition \ref{boundpath_rq} and the definition of $\zzs{n-1}Q$.
\begin{pro}\label{boundpath_zq} Assume that $Q$ is $n$-properly-graded quiver.
Then each bound path in $\zZ\mid _{n-1} Q$ is of length $n+1$ and contains at most one returning arrow.
\end{pro}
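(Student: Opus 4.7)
The plan is to project bound paths in $\zZ\mid_{n-1} Q$ down to bound paths in the returning arrow quiver $\tQ$, and then transfer Proposition \ref{boundpath_rq} through this projection.

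First I would define the obvious forgetful map $\pi\colon \zZ\mid_{n-1} Q \to \tQ$ by $(i,t)\mapsto i$ on vertices, $(\xa,t)\mapsto \xa$ for $\xa\in Q_1$, and $(\xb_p,t)\mapsto \xb_p$ for $p\in\caM$. By direct inspection, the three generating families $\zZ\rho$, $\zZ\rho_{\caM}$, and $\zZ\rho_0$ of the relations of $\zZ\mid_{n-1} Q$ are precisely the $\zZ$-parametrized lifts of the families $\rho$, $\rho_{\caM}$, and $\{\xb_p\xb_{p'}\mid p,p'\in\caM\}$ that generate $\trho$. It follows that a composable sequence $(\gamma_s,t_s)\cdots(\gamma_1,t_1)$ in $\zZ\mid_{n-1} Q$ is a bound path if and only if its image $\gamma_s\cdots\gamma_1$ under $\pi$ is a bound path in $\tQ$.

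Next I would verify that $\pi$ is a covering of bound quivers: for every vertex $(i,t)$, the arrows of $\zZ\mid_{n-1} Q$ starting (resp. ending) at $(i,t)$ are in bijection, under $\pi$, with the arrows of $\tQ$ starting (resp. ending) at $i$. This is immediate from the definition of $(\zZ\mid_{n-1} Q)_1$: there is exactly one arrow $(\xa,t)$ at $(i,t)$ for each $\xa\in Q_1$ incident to $i$, and exactly one arrow $(\xb_p,t)$ for each returning arrow $\xb_p$ of $\tQ$ incident to $i$. Consequently, a bound path in $\zZ\mid_{n-1} Q$ is maximal if and only if its $\pi$-image is a maximal bound path in $\tQ$, and conversely each maximal bound path in $\tQ$ lifts uniquely (once a source lift is chosen) to a maximal bound path in $\zZ\mid_{n-1} Q$.

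Finally, I would invoke Proposition \ref{boundpath_rq}, which asserts that every maximal bound path in $\tQ$ has length $n+1$ and contains at most one returning arrow. Since $\pi$ preserves length and sends $\beta$-type arrows to returning arrows, the same conclusion transfers to $\zZ\mid_{n-1} Q$. The only delicate point is the initial identification of the three relation families on each side; once this bookkeeping is carried out, the covering property is built into the construction and the result follows formally.
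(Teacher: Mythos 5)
Your proof is correct and follows essentially the same route as the paper, which offers no written argument beyond the remark that the claim ``follows directly from Proposition \ref{boundpath_rq} and the definition of $\zZ\mid_{n-1}Q$''; your covering map $\pi$ and the matching of the three relation families $\zZ\rho$, $\zZ\rho_{\caM}$, $\zZ\rho_0$ with $\rho$, $\rho_{\caM}$, $\{\xb_p\xb_{p'}\}$ is precisely the bookkeeping that remark leaves implicit (and is consistent with the smash-product description in Proposition \ref{0nap:extendible1}). No gap.
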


$\zZ\mid _{n-1} Q$ is an $n$-translation quiver with the $n$-translation defined by $\tau(i,m) =(i,m-1)$.
This quiver is realized as a bound quiver of a smash product and we have the following easy generalization of Proposition 5.5 of \cite{g16}.

\begin{pro}\label{0nap:extendible1}
Let $\LL$ be an $n$-properly-graded algebra with bound quiver $Q$.
Then  the smash product $\dtL\#  k \zZ^*$ is a self-injective algebra with bound quiver $\zZ\mid _{n-1} Q $, when $\dtL$ is graded by taking $\LL$ as degree zero component and  the elements in the dual basis of $\caM$ in $D\LL_n$ as degree $1$ generators.

$\zZ\mid _{n-1} Q $ is $(n+1)$-properly-graded if $Q$ is $n$-properly-graded.
\end{pro}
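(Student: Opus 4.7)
The plan is to adapt the argument of Proposition 5.5 of \cite{g16} to the general $n$-properly-graded setting, with the two halves of the statement (smash product description and properly-graded property) handled separately.

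First, I would fix a $\zZ$-grading on $\dtL$ by placing $\LL$ in degree $0$ and $D\LL$ in degree $1$. This is consistent with the algebra structure because $\dtL \simeq \LL \oplus D\LL$ as $\LL$-bimodules with $D\LL \cdot D\LL = 0$, and the generating set of $D\LL$ is precisely the dual basis of $\caM$, which corresponds to the returning arrows $\beta_p$, $p\in\caM$. With this grading, the standard recipe for the smash product $\dtL\# k\zZ^*$ produces a bound quiver whose vertices are indexed by pairs $(i,t)\in Q_0 \times \zZ$: each homogeneous generator $x$ of $\dtL$ of degree $d$ with $e_j x e_i = x$ yields an arrow $(x,t):(i,t)\to(j,t+d)$. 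Arrows $\alpha\in Q_1$ then contribute copies $(\alpha,t):(i,t)\to(j,t)$ within each $\zZ$-layer, while each returning arrow $\beta_p$ contributes $(\beta_p,t):(j,t)\to(i,t+1)$ linking consecutive layers. This matches the arrow set of $\zzs{n-1}Q$ exactly.

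Second, I would lift the relations. The three families $\zZ\rho$, $\zZ\rho_{\caM}$, and $\zZ\rho_0$ in $\rho_{\zzs{n-1}Q}$ correspond respectively to the original relations $\rho$ of $\LL$ (copied inside each layer), the bimodule relations $\rho_{\caM}$ for the $\LL$-bimodule $D\LL$ (splitting as in Lemma \ref{relzero} across two adjacent layers), and the socle-annihilation relations $\beta_{p'}\beta_p$ that force the degree-$2$ part of $\dtL$ to vanish. Since the smash product inherits all relations of $\dtL$ compatibly with the degree shift, this is precisely the presentation of $\dtL\# k\zZ^*$. For self-injectivity I would invoke two standard facts: the trivial extension $\dtL$ is symmetric, hence self-injective, and the smash product of a graded self-injective algebra with $k\zZ^*$ remains self-injective, with Nakayama permutation given by the Nakayama permutation of $\dtL$ shifted by the grading degree of the socle (namely by $+1$ on the second coordinate of $Q_0\times \zZ$).

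Finally, the $(n+1)$-properly-graded claim is immediate from Proposition \ref{boundpath_zq}: every maximal bound path in $\zzs{n-1}Q$ has length exactly $n+1$ and contains at most one returning arrow, so all maximal bound paths share the common length $n+1$. The main obstacle will be the careful bookkeeping in the middle step: the normalized form of $\rho_{\caM}$ given by Lemma \ref{relzero} must be checked to split cleanly into pairs of paths lying in the correct pair $(t,t+1)$ of adjacent layers, and one must confirm that no additional relations are introduced beyond $\zZ\rho$, $\zZ\rho_{\caM}$, and $\zZ\rho_0$. Because in the $n$-properly-graded case the paths inside $\LL$ may have any length between $0$ and $n$, the distribution of paths in $\rho_\caM$ across the $\zZ$-layers requires more care than in the stable ($n$-translation) case of \cite{g16}, but the argument is structurally identical.
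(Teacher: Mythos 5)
Your proposal is correct and follows exactly the route the paper intends: the paper gives no written proof of this proposition, stating only that it is an "easy generalization of Proposition 5.5 of \cite{g16}," and your argument is a faithful filling-in of that generalization (grading $\dtL$ with $\LL$ in degree $0$ and $D\LL$ in degree $1$, reading off the smash-product quiver and the three relation families, citing symmetry of the trivial extension plus preservation of self-injectivity under $\#\, k\zZ^*$, and deducing the $(n+1)$-properly-graded claim from Proposition \ref{boundpath_zq}, which the paper establishes beforehand and independently). The bookkeeping worry you raise at the end resolves itself: each element of $\rho_{\caM}$ is homogeneous of degree $1$ in this grading regardless of the lengths of the paths $x_f,y_f$, so it lifts cleanly to each adjacent pair of layers.
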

Write $\zzs{n-1}Q[t]$ for the full bound sub-quiver of  $\zzs{n-1}Q$ with vertex set $\{(j,t)\mid j\in Q_0\}$ for each integer $t$.

Starting with a stable $n$-translation quiver $\tQ=(\tQ_0,\tQ_1,\rho)$, we introduced a stable $n$-translation  quiver $\zZ_{\vv}\tQ = (\zZ_{\vv} {\tQ}_0, \zZ_{\vv}{\tQ}_1, \rho_{\vv}) $ in \cite{g12} (denoted by $\olQ$ and is called the  separated directed quiver of the stable bound quiver $\tQ$ there).
The vertex set is $$\zZ_{\vv}{\tQ}_0=\{(i,m) \mid  i \in \tQ_0, m \in \mathbb Z\}$$ and the arrow set is $$\zZ_{\vv}{\tQ}_1 = \{(\alpha, m):(i,m) \to (j,m+1)\mid \alpha: i \to j \in \tQ_1, m \in \mathbb Z \}.$$
If $p = \alpha_s \cdots \alpha_1 $ is a path in $\tQ$, write $(p,m) = (\alpha_s, m+s-1) \cdots (\alpha_1, m)$ for each $m \in \mathbb Z$.
A relation set is  $$\rho_{\vv} =\{(\zeta,m)\mid  \zeta \in \rho, m \in \mathbb Z \},$$ here $(\zeta,m) = \sum_{t} a_t (p_t,m)$ for each $\zeta = \sum_{t} a_t p_t \in \rho $.
$\zZ_{\vv}\tQ$ is a stable $n$-translation quiver with $n$-translation defined by $\tau_{\vv}(i,m) = (\tau i, m-n-1)$.
If $\tQ$ is a returning arrow quiver of $Q$,  $\zZ_{\vv} \tQ$ is also called {\em the second $\zZ Q$ type construction of $Q$}.

We have the following proposition directly from the definition.
\begin{pro}\label{0nap:niceg}
If $\tQ$ is an $(n+1)$-translation quiver, then $\mathbb Z_{\vv} \tQ $ is a nicely-graded $(n+1)$-translation quiver.
\end{pro}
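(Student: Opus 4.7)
The plan is to observe that the construction of $\zZ_{\vv}\tQ$ is explicitly stratified by the second coordinate, and that this stratification furnishes both the nicely-graded structure and also fits together with the $(n+1)$-translation structure already essentially established in the preceding paragraph.

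For the nicely-graded claim, I would define $\vu: (\zZ_{\vv}\tQ)_0 \to \zZ$ by $\vu(i,m) = m$. Every arrow in $\zZ_{\vv}\tQ$ has the form $(\alpha,m):(i,m)\to (j,m+1)$, so $\vu(j,m+1) = \vu(i,m)+1$. By Proposition \ref{nicegrd} (applied component by component, since connectedness is only required to reduce $\vu_{i,w}$ to a single function $\vu_i$), it follows immediately that $\zZ_{\vv}\tQ$ is nicely-graded. In particular $\zZ_{\vv}\tQ$ is acyclic, which is a useful side remark: it is a covering of $\tQ$ in which the oriented cycles of $\tQ$ have been unwound.

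For the $(n+1)$-translation part, I would transport the argument sketched in the paragraph preceding the proposition (which asserts the analogous statement for a stable $n$-translation quiver) to the present setting by replacing $n$ with $n+1$. Concretely, if $\tQ$ carries the $(n+1)$-translation $\tau$, then I would define
\[
\tau_{\vv}(i,m) \;=\; (\tau i,\, m-n-2)
\]
on $\zZ_{\vv}\tQ$. Since maximal bound paths in $\tQ$ have length $n+2$ and go from $\tau i$ to $i$, their lifts $(p,m)$ are paths in $\zZ_{\vv}\tQ$ from $(\tau i, m)$ to $(i, m+n+2) = (i, m - (-n-2))$, so $\tau_{\vv}$ realises the Nakayama-type permutation. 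Stability is inherited from the stability of $\tau$ on $\tQ$ together with the fact that the integer shift on the second coordinate is a bijection of $\zZ$.

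The main obstacle, and the only genuinely non-routine verification, will be checking that the relation set $\rho_{\vv}=\{(\zeta,m)\mid \zeta\in\rho, m\in\zZ\}$ is compatible with the $(n+1)$-translation structure: namely that a linear combination of paths of length $\le n+1$ lies in the ideal generated by $\rho_{\vv}$ if and only if the corresponding combination lies in the ideal generated by $\rho$, and that bound paths of length $n+2$ in $\zZ_{\vv}\tQ$ are in bijection with those in $\tQ$ under $(p,m)\leftrightarrow p$. Both statements follow since the construction only appends the second coordinate in a purely additive way and does not introduce or identify any relations beyond those in $\rho$. Once these bookkeeping items are in place, the stable $(n+1)$-translation quiver axioms for $\zZ_{\vv}\tQ$ are immediate, and combined with the grading $\vu$ the proposition follows.
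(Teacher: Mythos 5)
Your proposal is correct and follows exactly the route the paper intends: the paper offers no written proof, asserting the statement "directly from the definition," and your argument (the grading $\vu(i,m)=m$ increases by $1$ along every arrow $(\alpha,m):(i,m)\to(j,m+1)$, giving nicely-gradedness via Proposition \ref{nicegrd}, together with the translation $\tau_{\vv}(i,m)=(\tau i,m-n-2)$ obtained from the paper's general formula with $n$ replaced by $n+1$) is precisely the omitted verification. The bookkeeping about relations and bound paths of length $n+2$ lifting from $\tQ$ is the right thing to check and is handled adequately.
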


The following Proposition is proven in Theorem 4.2 of \cite{g12}.

\begin{pro}\label{0nap:extendible2}
Let $\tL$ be a graded self-injective algebra defined by the stable $n$-translation quiver $\tQ$, then under the gradation of $\tL$ induced from the length of the paths, the smash product  $\tL\# ' k \mathbb Z^*$ is a graded self-injective algebra  given by the stable $n$-translation quiver $\mathbb Z_{\vv} \tQ $.
\end{pro}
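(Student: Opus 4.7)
The plan is to identify the bound quiver of the smash product $\tL \#' k\zZ^*$ explicitly and transfer the graded self-injective structure across this identification; this is the natural analog of Proposition \ref{0nap:extendible1}, the difference being that here we smash the self-injective algebra $\tL$ itself rather than the trivial extension of a properly-graded algebra. Unpacking the smash product, since $\tL$ is $\zZ$-graded by path length, a complete set of orthogonal primitive idempotents of $\tL \#' k\zZ^*$ is given by $\{e_i \otimes p_m \mid i \in \tQ_0,\, m \in \zZ\}$, where $\{e_i\}$ are the vertex idempotents of $\tL$ and $\{p_m\}$ is the dual basis of $k\zZ^*$. This identifies the vertex set of the bound quiver with $\zZ_{\vv}\tQ_0 = \tQ_0 \times \zZ$. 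Since every arrow $\alpha: i \to j$ in $\tQ_1$ is homogeneous of degree $1$, its image in the smash product contributes, for each $m \in \zZ$, exactly one arrow $(\alpha, m): (i, m) \to (j, m+1)$, recovering the arrow set of $\zZ_{\vv}\tQ$. A homogeneous quadratic relation $\zeta \in \rho$ then splits into a family $(\zeta, m)$ for $m \in \zZ$, giving exactly $\rho_{\vv}$; one also has to check that no additional relations are introduced by the smash product construction.

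For graded self-injectivity, I would lift the Nakayama permutation $\nu$ of $\tL$ to a permutation $\nu_{\vv}(i, m) = (\nu i, m + n + 1)$ on $\zZ_{\vv}\tQ_0$, using that every maximal bound path in $\tQ$ has length $n+1$ (the stable $n$-translation quiver condition gives exactly this Loewy structure). The socle of the indecomposable graded projective at $(i, m)$ is spanned by the images of maximal bound paths starting at $i$, all of which terminate at the vertex $(\nu i, m + n + 1)$; this exhibits $\nu_{\vv}$ as a Nakayama permutation of $\tL \#' k\zZ^*$, so the smash product is graded self-injective, and the induced $n$-translation on its bound quiver is $\tau_{\vv}(i, m) = (\tau i, m - n - 1)$, matching the definition of $\zZ_{\vv}\tQ$.

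The main obstacle is the bookkeeping: one must verify that the sign/shift conventions implicit in the decorated smash product $\#'$ produce arrows in the direction $(i, m) \to (j, m+1)$ rather than the opposite, and confirm that the relation set of $\tL \#' k\zZ^*$ coincides with $\rho_{\vv}$ and contains no hidden extras from the group-graded multiplication. Once this is pinned down on degree-$1$ generators and quadratic relations, the remainder proceeds by the standard recipe identifying an algebra with its bound quiver presentation and reading off the Nakayama structure layer by layer, essentially repeating the argument of Proposition \ref{0nap:extendible1} with the Loewy length shifted by one.
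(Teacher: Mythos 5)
The paper does not actually prove this proposition; it is quoted from Theorem 4.2 of \cite{g12}, and your outline follows the same route taken there: identify the idempotents $e_i\otimes p_m$, observe that degree-one elements give arrows $(i,m)\to(j,m+1)$, check that homogeneous relations split into the family $\rho_{\vv}$ with nothing extra, and lift the Nakayama permutation by a degree shift of $n+1$. That identification of the bound quiver of $\tL\#'k\zZ^*$ with $\zZ_{\vv}\tQ$ is correct and is exactly the content of the cited argument, so on that half your proposal is sound.

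The one genuinely thin spot is the self-injectivity step. Knowing that the socle of the indecomposable projective at $(i,m)$ is simple and sits at $(\nu i, m+n+1)$ is a necessary condition but not by itself sufficient: plenty of non-self-injective algebras have projectives with simple socle. What actually closes the argument is that the smash product merely redistributes the graded pieces of $\tL$, namely $e_{(j,m')}\bigl(\tL\#'k\zZ^*\bigr)e_{(i,m)}=e_j\tL_{m'-m}e_i$, so the graded bimodule isomorphism $D\tL\cong{}_{\nu}\tL$ witnessing self-injectivity of $\tL$ (which is degree-reversing precisely because the grading is by path length, with socle concentrated in degree $n+1$) transports componentwise to an isomorphism $D\bigl(\tL\#'k\zZ^*\,e_{(i,m)}\bigr)\cong e_{(\nu i,\,m+n+1)}\tL\#'k\zZ^*$, exhibiting each indecomposable projective as an indecomposable injective. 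This is where the hypothesis ``under the gradation induced from the length of the paths'' is used, and it should be stated rather than absorbed into the bookkeeping; once it is, the rest of your sketch goes through as written.
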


The quiver $\zZ_{\vv}\tQ$ is a locally finite bound quiver if $\tQ$ is so, and it contains no oriented cyclic.
The quiver $\zZ_{\vv}\tQ$ has only finitely many $\tau$-orbit if $\tQ$ is finite.
If $\tQ$ is connected and $d$ is the maximal common divisor of the lengths of the minimal cycles in $\tQ$, then $\zZ_{\vv}\tQ$ has $d$ connected components.
In this case, the algebra $\tL$ defined by the bound quiver $\tQ$ is indecomposable, and $\tL\# ' k \mathbb Z^*$ is a direct sum of $d$ copies of isomorphic indecomposable algebras(see \cite{g12}).

Write $\zZ_{\vv}\tQ[l',l]$ for the full bound sub-quiver of $\zZ_{\vv}\tQ$ with vertex set $\{(j,r)\mid l'\le r \le l\}$ for integers $l' \le l$.

\medskip

Complete $\tau$-slice and $\tau$-slice algebras are introduced for stable $n$-translation quiver in \cite{g12}.
Let $\olQ=(\olQ_0, \olQ_1, \olrho)$ be an acyclic stable $n$-translation quiver with $n$-translation $\tau$, and assume that $\olQ$ has only finite many $\tau$-orbits.
Let $Q$ be a full sub-quiver of $\olQ$.
$Q$ is called a  {\em complete $\tau$-slice} of $\olQ$ if it has the following properties:
(a).for each vertex $v $ of $\olQ$, the intersection of the $\tau$-orbit of $v$ and the vertex set of $Q$ is a single-point set. (b). $Q$ is convex in $\olQ$.
Thus when normalizing the relations such that they are linear combinations of paths with the same starting vertex and the same ending vertex, then $$\rho = \{x = \sum_p a_p p \in \olrho\mid  \vs(p), \vt(p) \in Q_0 \} \subset \olrho.$$
We also call the bound quiver $Q = (Q_0, Q_1,\rho)$   a  {\em complete $\tau$-slice} of the bound quiver $\olQ$.

For an $n$-properly-graded quiver $Q$, $\zzs{n-1}Q[t]$ is a complete $\tau$-slice of $\zzs{n-1}Q$ which is isomorphic to $Q$, and $\zZ_{\vv}\tQ[0,n]$ is a complete $\tau$-slice of $\zZ_{\vv}\tQ$ for a finite stable $n$-translation quiver $\tQ$.

The following proposition is proved in Proposition 3.5 of \cite{gll19b}.
\begin{pro}\label{znq}
Let $\olQ$ be an acyclic stable $n$-translation quiver and  let  $Q=(Q_0,Q_1,\rho)$ be a complete $\tau$-slice of $\olQ$, then $\olQ \simeq \zZ\mid _{n-1} Q$ as bound quivers.
\end{pro}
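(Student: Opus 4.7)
The plan is to build an explicit bound-quiver isomorphism $\phi: \olQ \to \zzs{n-1}Q$ using the complete $\tau$-slice structure, then check compatibility of relations.

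First, I would define the vertex map. Because $\olQ$ is acyclic, $\tau$ acts freely on $\olQ_0$: a finite $\tau$-orbit would yield an oriented cycle, since each maximal bound path of length $n+1$ in $\olQ$ goes from $\tau k$ to $k$, and iterating $s$ times would produce a cycle of length $s(n+1)$. Combined with the complete $\tau$-slice property, each $v\in \olQ_0$ thus has a unique presentation $v=\tau^{-t_v}i_v$ with $i_v\in Q_0$ and $t_v\in\zZ$. Set $\phi(v) = (i_v,t_v)$; this is a bijection onto $(\zzs{n-1}Q)_0$.

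Next, I would extend $\phi$ to arrows. For an arrow $\gamma: v\to v'$ in $\olQ$, compare the heights $t_v$ and $t_{v'}$. The key claim is $t_{v'}-t_v\in\{0,1\}$: indeed, $\gamma$ sits inside some maximal bound path $\tilde{p}$ of length $n+1$ from some $\tau k$ to $k$; along $\tilde{p}$ the height rises by exactly $1$ overall (from $t_k-1$ to $t_k$), and the $n$-properly-graded structure of $Q$ together with acyclicity of $\olQ$ forces each individual arrow to change the height by at most $1$. When $t_{v'}=t_v$, convexity of the translated slice $\tau^{-t_v}Q$ in $\olQ$ forces $\gamma$ to lie in $\tau^{-t_v}Q$; letting $\xa\in Q_1$ be the image of $\gamma$ under $\tau^{-t_v}Q\cong Q$, set $\phi(\gamma)=(\xa,t_v)$. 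When $t_{v'}=t_v+1$, the arrow $\gamma$ is the unique level-crossing arrow of a maximal bound path $\tilde{p}$ in $\olQ$ whose "tail" $p$ lies in $\tau^{-t_{v'}}Q\cong Q$ and belongs to $\caM$; set $\phi(\gamma)=(\xb_p,t_v)$.

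Third, I would verify surjectivity on arrows and compatibility of relations. Surjectivity is immediate: each $(\xa,t)$ is realised by the $t$-th translate of $\xa\in Q_1$, and each $(\xb_p,t)$ is realised by the unique level-crossing arrow in the maximal bound path of $\olQ$ extending $p\in\caM$. The generators of $\rho_{\zzs{n-1}Q}$ split into three batches $\zZ\rho$, $\zZ\rho_{\caM}$ and $\zZ\rho_0$. Those in $\zZ\rho$ translate $\rho$ level-by-level, and by convexity they are precisely the relations of $\olQ$ contained in each $\tau^{-t}Q$. The $\zZ\rho_0$ relations $\xb_{p'}\xb_p$ correspond to paths of length $2(n+1)$, which vanish in $\olQ$ since the underlying graded self-injective algebra has Loewy length $n+2$. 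Finally, the $\zZ\rho_{\caM}$ relations encode the $\LL$-bimodule structure on $D\LL$ via returning arrows, and they coincide with the relations in $\olQ$ that pin down the maximal bound paths from $\tau k$ to $k$.

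The main obstacle I expect is the arrow analysis of the second step — specifically the height dichotomy $t_{v'}-t_v\in\{0,1\}$ and the bijection between level-crossing arrows and the maximal bound paths in $\caM$. Once the stable $n$-translation structure is exploited to confine arrows between adjacent levels and to identify where maximal bound paths live, the remainder is routine bookkeeping about dual bases and relations.
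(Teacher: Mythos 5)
The paper does not actually prove this proposition --- it is quoted from Proposition 3.5 of \cite{gll19b} --- so your proposal can only be measured against the argument that reference and the surrounding machinery of Section \ref{stbq} supply. Your skeleton is the right one and agrees with it: the translates $\tau^{-t}Q$ partition $\olQ_0$ because a complete $\tau$-slice meets each $\tau$-orbit once and $\tau$ acts freely by acyclicity; arrows respect this level structure with jump $0$ or $1$; and the three batches of relations of $\zzs{n-1}Q$ must be matched. But two of your steps do not work as written. For the $\zZ\rho_0$ batch, under your own arrow map $(\xb_{p'},t+1)(\xb_p,t)$ corresponds to a path of length $2$ in $\olQ$ (two consecutive level-crossing arrows), not a path of length $2(n+1)$, so the Loewy length bound $n+2$ kills it only when $n=0$. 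The correct reason it vanishes is that in a graded self-injective algebra of Loewy length $n+2$ every nonzero path extends to a bound path of length $n+1$, which runs from $\tau k$ to $k$ and hence crosses exactly one level, so no nonzero path can contain two level-crossing arrows. (The same observation yields the upper bound $t_{v'}\le t_v+1$ in your dichotomy; the lower bound $t_{v'}\ge t_v$ is not automatic from acyclicity alone and needs convexity of the translated slices applied to the composite of $\gamma$ with a quiver path $v'\to\tau^{-1}v'$ of length $n+1$.)

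The more serious gap is the $\zZ\rho_{\caM}$ batch, which is where the content of the proposition lives and which you dispose of in one sentence. What has to be proved is that the $\LL$-$\LL$-bimodule spanned by the paths of $\olQ$ crossing from level $t$ to level $t+1$ is isomorphic to $D\LL$ with generating set $\caM^*$ and relations exactly $\rho_{\caM}$; this is forced by the non-degenerate socle pairing $e_{\tau^{-1}i}\olL_{n+1-k}e_j\otimes e_j\olL_k e_i\to e_{\tau^{-1}i}\olL_{n+1}e_i\cong k$ coming from self-injectivity, and it is also where the correspondence between level-crossing arrows $\tau^{-t}j\to\tau^{-t-1}i$ and elements of $\caM$ from $i$ to $j$ must be set up by choosing the arrow basis of $\olQ$ dual to $\caM$ --- your ``unique level-crossing arrow of the maximal bound path extending $p$'' is not well defined when $\dim_k e_j\LL_n e_i>1$. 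Finally, having shown that the three batches map into the relations of $\olQ$, you still owe the converse inclusion, that they generate all of them; this can be obtained by comparing graded dimensions with $\dtL\#k\zZ^*$ via Proposition \ref{0nap:extendible1}. None of this defeats your strategy, but as written the proof is incomplete at precisely the points where self-injectivity has to be used.
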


So we can use $\zZ\mid _{n-1} Q$ for a stable $n$-translation quiver when a complete $\tau$-slice $Q$ is known.
If $Q$ is a nicely-graded complete $\tau$-slice, $Q$ is called {\em homogeneous} if its depth is $n$.

\medskip

The algebra $\LL$ whose bound quiver is a complete $\tau$-slice $Q$ in  $\olQ$ is called a {\em $\tau$-slice algebra} of the bound quiver $\olQ$.
If $Q$ is an $n$-properly-graded quiver, then it is a complete $\tau$-slice of $\olQ =\mathbb Z\mid _{n-1} Q$.
So we get immediately the following.

\begin{pro}\label{slice:sub}
A quiver $Q$ is an $n$-properly-graded quiver if and only if it is a complete $\tau$-slice of a stable $n$-translation quiver.
\end{pro}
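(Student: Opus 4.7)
The statement has two directions, which I would handle separately using the machinery from this section.

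For the direction ``$Q$ $n$-properly-graded $\Rightarrow$ $Q$ is a complete $\tau$-slice of a stable $n$-translation quiver,'' I would apply the first $\zZ Q$ type construction to $Q$. By Proposition \ref{0nap:extendible1}, $\zzs{n-1}Q$ is the bound quiver of the smash product $\dtL\#k\zZ^*$, which is a graded self-injective algebra, and it is $(n+1)$-properly-graded, so this algebra has Loewy length exactly $n+2$. By the characterization recalled right before Proposition \ref{0nap:extendible1}, $\zzs{n-1}Q$ is therefore a stable $n$-translation quiver with $n$-translation $\tau(i,m)=(i,m-1)$. To finish this direction, I would verify that the standard slice $\zzs{n-1}Q[0]$ is a complete $\tau$-slice isomorphic to $Q$: each $\tau$-orbit $\{(i,m)\mid m\in\zZ\}$ meets $\zzs{n-1}Q[0]$ in the single vertex $(i,0)$; convexity holds since the only arrows among the vertices at level $0$ are of type $\xa$ and correspond bijectively with $Q_1$; and $i\mapsto(i,0)$ is an isomorphism of bound quivers.

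For the reverse direction, let $Q$ be a complete $\tau$-slice of a stable $n$-translation quiver $\olQ$. The graded self-injective algebra attached to $\olQ$ has Loewy length $n+2$, so each vertex $v$ both starts and ends a bound path of length $n+1$ (going from $v$ to $\tau^{-1}v$, and from $\tau v$ to $v$), and no bound path in $\olQ$ has length exceeding $n+1$. Since $Q$ is a full convex subquiver of $\olQ$ with inherited relations, every bound path in $Q$ is also a bound path in $\olQ$; so any bound path in $Q$ has length at most $n+1$. If a bound path in $Q$ had length exactly $n+1$, it would be one of the maximal bound paths of $\olQ$ with endpoints $\tau v$ and $v$ lying in a common $\tau$-orbit, contradicting the defining property that $Q$ contains exactly one vertex per $\tau$-orbit. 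Hence bound paths in $Q$ have length at most $n$. For the lower bound, given a maximal bound path $p$ in $Q$ ending at $v$, I would extend it in $\olQ$ to a maximal bound path $\tilde p$ of length $n+1$ ending at $v$; since $p$ cannot be extended inside $Q$, any immediate predecessor of the source of $p$ along $\tilde p$ lies outside $Q$, and then convexity together with the orbit structure forces that only one vertex can be missing, pinning down the length of $p$ to be exactly $n$.

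The main obstacle is the length-exactly-$n$ claim in the reverse direction: the upper bound $\le n$ follows readily from convexity and orbit-disjointness, but showing that every maximal bound path in $Q$ actually attains length $n$ requires coordinating the self-injective structure of $\olQ$ with the convexity of the slice in a careful combinatorial argument. A cleaner alternative, once Proposition \ref{znq} has been established independently of this statement, is to transport $Q$ across the isomorphism $\olQ\simeq\zzs{n-1}Q$ and apply Proposition \ref{boundpath_zq}: each bound path of length $n+1$ in $\zzs{n-1}Q$ contains exactly one returning arrow, so bound paths inside a standard slice (which contains no returning arrows) have length at most $n$, and the length-$(n+1)$ maximal paths of $\zzs{n-1}Q$ hit every vertex of the slice as one of the two endpoints of the slice portion, forcing the maximal bound paths within the slice to have length exactly $n$.
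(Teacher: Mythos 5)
Your forward direction is correct and is exactly the paper's argument: the paper obtains the proposition from the single remark that an $n$-properly-graded $Q$ sits as the complete $\tau$-slice $\zzs{n-1}Q[0]$ inside the stable $n$-translation quiver $\zzs{n-1}Q$ supplied by Proposition \ref{0nap:extendible1}. Your upper bound in the converse is also sound: a bound path of $Q$ is bound in $\olQ$ (by convexity of $Q$ the relevant relations are inherited), a bound path of length $n+1$ in $\olQ$ joins $\tau i$ to $i$, and two vertices of one $\tau$-orbit cannot both lie in $Q$.

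The gap is in the lower bound of the converse. The step ``convexity together with the orbit structure forces that only one vertex can be missing'' is unjustified. Writing the extension as $\tilde p=pb$ with $b$ a path of length $n+1-\ell$ from $\tau v$ to $\vs(p)$, where $\ell=l(p)$, maximality of $p$ in $Q$ only tells you that the source of the last arrow of $b$ lies outside $Q$, and trivially $\tau v\notin Q_0$; nothing so far excludes $\ell<n$, i.e.\ $l(b)\ge 2$ with several vertices of $b$ outside $Q$. Convexity constrains paths whose two endpoints lie in $Q$, and $\tilde p$ begins outside $Q$, so it gives no leverage here. The proposed ``cleaner alternative'' does not repair this: Proposition \ref{boundpath_zq} assumes as hypothesis that $Q$ is $n$-properly-graded, which is exactly the conclusion sought, so invoking it is circular; and the closing claim that the portion of a length-$(n+1)$ maximal bound path of $\olQ$ lying in the slice always has length $n$ is false. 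For $n=2$ and $Q$ the linearly oriented $A_3$, the complete $\tau$-slice $\{(2,0),(3,0),(1,1)\}$ of $\zzs{1}Q$ meets the maximal bound path $(3,0)\to(1,1)\to(2,1)\to(3,1)$ in a sub-path of length $1$ only (the slice is nevertheless $2$-properly-graded, so the example refutes the argument, not the statement). A correct proof of the lower bound has to exploit maximality of $p$ inside $Q$ more seriously, for instance by reducing via $\tau$-mutations (Proposition \ref{tauslicemut}) to the case where $\vt(p)$ is a sink of $Q$ and then using the hammock statement of Lemma \ref{tau-slice-hammock}, which places all of $H_{\vt(p)}$ except $\tau\vt(p)$ inside $Q$ and so forces a bound path of length exactly $n$ ending at $\vt(p)$.
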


We have a version of Proposition \ref{slice:sub} for algebras.
\begin{pro}\label{0nap:slicealgebra1} An algebra is $n$-properly-graded if and only if it is a $\tau$-slice algebra.\end{pro}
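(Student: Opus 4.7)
The plan is to deduce this statement directly from its quiver-theoretic counterpart, Proposition \ref{slice:sub}, by translating between bound quivers and the algebras they define. Since the paper has set up that an $n$-properly-graded algebra is by definition the algebra $kQ/(\rho)$ of an $n$-properly-graded bound quiver $Q$, and that a $\tau$-slice algebra is by definition the algebra attached to a bound quiver $Q$ which is a complete $\tau$-slice of some stable $n$-translation quiver $\olQ$, the two conditions on $\LL$ correspond, respectively, to the two conditions on its bound quiver in Proposition \ref{slice:sub}.

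Concretely, I would argue as follows. For the forward implication, assume $\LL$ is $n$-properly-graded, so its bound quiver $Q$ is $n$-properly-graded. Apply Proposition \ref{slice:sub} to conclude that $Q$ is a complete $\tau$-slice of a stable $n$-translation quiver $\olQ$; explicitly, by the discussion preceding Proposition \ref{znq} one may take $\olQ = \zzs{n-1}Q$. Then $\LL$, being the algebra defined by this complete $\tau$-slice, is a $\tau$-slice algebra by definition.

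For the reverse implication, assume $\LL$ is a $\tau$-slice algebra. Then its bound quiver $Q$ is a complete $\tau$-slice of some stable $n$-translation quiver $\olQ$, and the normalization of the relation set as a sum of $\rho(i,j)$ ensures that the relations of $\LL$ are precisely those of $\olQ$ supported on $Q_0$, so the identification of $\LL$ with $kQ/(\rho)$ is unambiguous. Invoking Proposition \ref{slice:sub} again, $Q$ is $n$-properly-graded, hence so is $\LL$.

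The statement is essentially a restatement of Proposition \ref{slice:sub}, so there is no real obstacle. The only step worth checking carefully is that passing from an algebra to its bound quiver and back is compatible with both the $n$-properly-graded condition and with being a complete $\tau$-slice; this is ensured by the hypotheses in Section \ref{pre} that $\LL$ is generated in degrees $0$ and $1$ with $\LL_0$ a sum of copies of $k$, so that the bound quiver presentation is determined by $\LL$ and the relations respect the grading.
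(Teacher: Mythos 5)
Your proposal is correct and matches the paper's approach: the paper offers no separate argument for this proposition, presenting it simply as "a version of Proposition \ref{slice:sub} for algebras," i.e.\ exactly the quiver-to-algebra translation you spell out. Your added care about the bound-quiver presentation being determined by the grading hypotheses is a reasonable elaboration of what the paper leaves implicit.
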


By Proposition 3.1 of \cite{gx21}, a $\tau$-slice algebra is both a quotient algebra and a subalgebra of the graded self-injective algebra $\olL$ defined by the bound quiver $\olQ$.

\medskip

When a complete $\tau$-slice of an stable $n$-translation quiver is quadratic, its quadratic dual  is called an {\em $n$-slice}.

In case that $\dtL$ of a $\tau$-slice algebra $\LL$ is an $n$-translation algebra, its quadratic dual $\GG= {\LL}^{!, op}$ is called {\em a $n$-slice algebra} (see \cite{g20}, it was called dual $\tau$-slice algebra in \cite{gx21}).
In \cite{gxl21}, $\GG$ is classified according to its $(n+1)$-preprojective algebra $\Pi(\GG)$, and $\GG$ is  of {\em finite type} if the dimension of $\Pi(\GG)$ is of finite dimension, and is of {\em infinite type} if $\Pi(\GG)$ of infinite dimension.
If $\GG$ is of infinite type, it is called of {\em tame type} if $\tQ$ is of finite complexity and of {\em wild type} if $\tQ$ is of infinite complexity.
If $\Pi(\GG)$ is of finite dimension, it is $(q, n+1)$-Koszul for some finite $q$ and is called of {\em finite type} in \cite{gxl21}.
If $\LL$ is $n$-nicely-graded, we also call $\GG$ a {\em nice $n$-slice algebra}.

\medskip

The $\tau$-hammocks are introduced in \cite{g12} for stable $n$-translation quivers, which are generalization of meshes in a translation quivers.
Let $\olQ= (\olQ_0,\olQ_1, \overline{\rho} )$ be a stable $n$-translation quiver with $n$-translation $\tau$ and let $\olL = k\olQ/(\olrho)$.
For each vertex  $i\in \olQ_0$,  the {\em $\tau$-hammock $H_i$ ending at $i$} is defined  as the quivers with the vertex set $$H_{i,0}=\{ (j,-t) \mid  j \in \olQ_0, \exists p\in \olQ_t, \vs(p)=j, \vt(p)=i,  0\neq p\in \olL \},$$ the arrow set $$\arr{ll}{ H_{i,1} = &\{ (\alpha , t): (j,-t-1) \longrightarrow (j', -t)\mid \alpha: j\to j'\in \olQ_1, \\ &\qquad \exists p\in \olQ_t, \vs(p)=j', \vt(p)=i, 0\neq p\xa \in \olL \}, }$$ and a hammock function $\mu_i: H_{i,0} \longrightarrow \zZ$ which is integral map on the vertices defined by $$\mu_i (j,-t)=\dmn_k e_i \olL_t e_j$$ for $(j,-t)\in H_{i,0}.$

Dually, for $i\in \olQ_0$, the  {\em $\tau$-hammock $H^i$ starting at $i$} is defined as the quivers with the vertex set $$H^i_0=\{ (j,t) \mid  j \in \olQ_0, \exists p\in \olQ_t,  \vs(p)=i, \vt(p)=j, 0\neq p\in \olL \}, $$ the arrow set
$$\arr{ll}{ H^i_1= & \{ (\alpha , t): (j,t) \longrightarrow (j', t+1)\mid \alpha: j\to j'\in \olQ_1, \\ &\qquad \exists p\in \olQ_t, \vs(p)=i, \vt(p)=j, 0\neq \alpha p\in \olL  \},}$$ and a hammock functions $\mu^i: H^i_0 \longrightarrow \zZ$ which is the integral maps on the vertices  defined by $$\mu^i (j,t)=\dmn_k e_j \olL_t e_i$$ for $(j,t)\in H^i_0.$
It is easy to see that $\iota: (j,t) \to (j, n+1+t) $ defines a bijective map from $H_{i,0}$ to $H^{\tau i}_0$ preserving the arrows, with the inverse  $\iota^{-1}: (j,t) \to (j, t-n+1) $.
We also call a vertex $j$ with $(j,t) \in H_{i,0}$ (respectively, $(j,t) \in H^i_{0}$) for some $t$ a vertex in the $\tau$-hammock $H_i$(respectively, $H^i$).

When $\olQ$ is nicely-graded, the hammocks $H^i$ and $H_i$ can be regarded as a sub-quiver of $\olQ$.

We  have the following observation.

\begin{lem}\label{tau-slice-hammock}
Let $Q$ be a complete $\tau$-slice in an acyclic stable $n$-translation quiver.

If $i$ is a sink in $Q$, then all the vertices of $H_i$ are in $Q$ except for $\tau i$.

If $i$ is a source in $Q$, then all the vertices of $H^i$ are in $Q$ except for $\tau^{-1} i$.
\end{lem}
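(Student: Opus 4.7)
The plan is to identify $\olQ$ with $\zzs{n-1}Q$ via Proposition \ref{znq}, viewing $Q$ as the full sub-quiver on $\{(v,0)\mid v\in Q_0\}$ so that $\tau(v,m)=(v,m-1)$.  Then $i=(i_0,0)$, $\tau i=(i_0,-1)$, and every arrow of $\zzs{n-1}Q$ advances the time coordinate by $0$ (arrows of type $\alpha$) or by $1$ (returning arrows).  Consequently a path from $(j_0,m)$ to $(i_0,0)$ of length $t$ contains exactly $-m$ returning arrows, and by Proposition \ref{boundpath_zq} at most one; thus $m\in\{0,-1\}$.  When $m=0$ the vertex already lies in $Q$, so the work is to pin down the unique vertex of $H_i$ at time $-1$.

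Assume $(j_0,-1)$ is connected to $(i_0,0)$ by a bound path.  Its single returning arrow lets it factor as $(q_2,0)(\beta_p,-1)(q_1,-1)$ with $q_1\colon j_0\to k$ and $q_2\colon k'\to i_0$ paths in $Q$, and $\beta_p$ the returning arrow for some maximal bound path $p\colon k'\to k$ in $\caM$.  Inside $\dtL\simeq\LL\oplus D\LL$ this corresponds to the element $q_2\beta_pq_1\in e_{i_0}D\LL e_{j_0}=D(e_{j_0}\LL e_{i_0})$.  Since $i_0$ is a sink of $Q$, no arrow of $Q$ starts at $i_0$, hence $e_{j_0}\LL e_{i_0}=0$ whenever $j_0\neq i_0$, so the whole ambient space $e_{i_0}D\LL e_{j_0}$ vanishes and $q_2\beta_pq_1=0$ uniformly in the decomposition.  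Therefore the path cannot be bound unless $j_0=i_0$, i.e.\ unless $(j_0,-1)=\tau i$.

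The source case for $H^i$ is symmetric: vertices of $H^i$ sit at time $m\in\{0,1\}$, and a bound path from $(i_0,0)$ to $(j_0,1)$ gives an element of $e_{j_0}D\LL e_{i_0}=D(e_{i_0}\LL e_{j_0})$.  Since $i_0$ is a source in $Q$, no arrow of $Q$ ends at $i_0$, so $e_{i_0}\LL e_{j_0}=0$ unless $j_0=i_0$, forcing $j=\tau^{-1}i$.

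The main obstacle I anticipate is the bimodule bookkeeping at time $-1$: one must verify the vanishing of $q_2\beta_pq_1$ for every decomposition $(q_1,q_2,p)$, not merely for a single fixed one.  Phrasing the argument at the level of the ambient spaces $e_{i_0}D\LL e_{j_0}$ rather than at the level of individual elements makes this uniformity automatic, so the hard step reduces to the elementary observation that no non-trivial bound path in $\LL$ starts at a sink (or ends at a source) of $Q$.
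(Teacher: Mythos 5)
Your proof is correct. The paper states this lemma only as an ``observation'' with no proof supplied, so there is nothing to compare against; your argument fills the gap soundly: reducing to $\olQ\simeq\zzs{n-1}Q$ via Proposition \ref{znq}, bounding the time coordinate of hammock vertices to $\{0,-1\}$ by counting returning arrows with Proposition \ref{boundpath_zq}, and then killing the time $-1$ vertices other than $\tau i$ by the vanishing of the whole space $e_{i_0}D\LL e_{j_0}\cong D(e_{j_0}\LL e_{i_0})$ when $i_0$ is a sink is exactly the right chain of reasoning, and phrasing the last step at the level of ambient bimodule components (rather than individual decompositions $q_2\beta_pq_1$) correctly disposes of the uniformity issue you flag.
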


If $i$ is a source of a complete $\tau$-slice $Q$ of $\olQ$, then all the vertices of the $\tau$-hammock $H^i$ except $\tau^{-1} i$ are in $Q$, and {\em the $\tau$-mutation $s_i^+ Q$} is obtained from $Q$ by removing the vertex $i$ and the arrows from it and  adding the vertex $\tau^{-1}i$ and the arrows from the vertices in $Q$ to it.
The $\tau$-mutation $s_{j}^- Q$ is defined dually  for a sink $j$ of $ Q$.
A $\tau$-mutation of a complete $\tau$-slice in $\olQ$ is again a complete $\tau$-slice in $\olQ$.
If $i$ is a source of $Q$, then $s_i^-s_i^+ Q = Q$, and if $i$ is a sink of $Q$, then $s_i^+s_i^- Q = Q$.

If $\LL$ is the $\tau$-slice algebra of the complete $\tau$-slice $Q$ in $\olQ$ and $\GG$ is the corresponding $n$-slice algebra.
The $\tau$-slice algebra of $s^\pm_i Q$ is called {\em the $\tau$-mutation} of $\LL$ at $i$ and is denoted by $s^\pm_i \LL$, and the $n$-slice algebra of $s^\pm_i Q^{\perp}$ is called {\em the $\tau$-mutation} of the $n$-slice algebra $\GG$ at $i$ and is denoted by $s^\pm_i \GG$.

A sequence $\{i_1, \ldots, i_m\}$ of the vertices of $Q$ is called {\em admissible source sequence} if $i_1$ is a source of $Q$ and $i_t$ is a source of $s^+_{i_{t-1}}\cdots s^+_{i_1} Q$ for $t=1,\cdots,m-1$.
An {\em admissible sink sequence} is defined dually.

The complete $\tau$-slices are related via $\tau$-mutations, we have the following refinement of Lemma 6.4 of \cite{g12}.
\begin{pro}\label{tauslicemut}
Let $Q$ and $Q'$ be two complete $\tau$-slice of a stable $n$-translation quiver $\olQ$ with finite $\tau$-orbits.
Then

\begin{enumerate}
\item There is an admissible source sequence $i_1,\ldots,i_r$ such that $Q'$ is isomorphic to $s^+_{i_r} \cdots s^+_{i_1} Q$.

\item There is an admissible sink  sequence $i_1,\ldots,i_r$ such that $Q'$ is isomorphic to $s^-_{i_r} \cdots s^-_{i_1} Q$.

\item There are $\tau$-mutations $s^*_1,\ldots,s^*_r$ such that $Q' = s^*_r \cdots s^*_1 Q$, where $s_t^* = s_{i_t}^+$ for some source $i_t$ of $s^*_{t-1} \cdots s^*_1 Q$ or $s_t^* = s_{i_t}^-$ for some sink $i_t$ of $s^*_{t-1} \cdots s^*_1 Q$.
\end{enumerate}
\end{pro}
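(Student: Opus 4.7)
The plan is to prove (1) first by an induction on a potential measuring the ``distance'' between $Q$ and $Q'$, deduce (2) by the formally dual argument, and observe that (3) is an immediate consequence of (1).

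Because each $\tau$-orbit of $\olQ$ meets a complete $\tau$-slice in exactly one vertex, for each orbit $O$ I can write the unique vertex of $Q'$ in $O$ as $\tau^{-d(O)}$ applied to the unique vertex of $Q$ in $O$, producing an integer $d(O)\in\zZ$. Since $Q'$ is only required up to isomorphism as a bound sub-quiver of $\olQ$, I replace $Q'$ by $\tau^{-m} Q'$ for a suitable $m$ to arrange $d(O)\ge 0$ for every orbit with $d(O_0)=0$ for at least one orbit. Setting $D(Q,Q')=\sum_O d(O)$, which is finite since $\olQ$ has only finitely many $\tau$-orbits, I proceed by induction on $D(Q,Q')$. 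The base $D=0$ is trivial.

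The crux is the following key lemma: \emph{if $D(Q,Q')>0$, then $Q$ has a source $i$ with $d(O_i)>0$}. Granting it, the vertex $\tau^{-1}i$ lies in the $\tau$-hammock $H^i$ (none of whose vertices besides $\tau^{-1}i$ leave $Q$ by Lemma \ref{tau-slice-hammock}), so $s^+_i Q$ is again a complete $\tau$-slice by Proposition \ref{znq} and the mutation conventions of Section \ref{stbq}; moreover $D(s^+_i Q, Q')=D(Q,Q')-1$. Iterating, and using that each $i_t$ is, by construction, a source of the current slice, yields an admissible source sequence as required in (1). Part (2) follows verbatim with sinks, $s^-$, and the hammocks $H_j$ in place of $H^i$; part (3) is a special case of (1).

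To prove the key lemma, consider the sub-quiver of $Q$ induced on $V=\{i\in Q_0\mid d(O_i)>0\}$. Since $Q$ is acyclic, $V$ contains a vertex $i$ with no incoming arrow from $V$. I claim $i$ is actually a source of $Q$. If not, there is an arrow $\xa:j\to i$ in $Q$ with $j\notin V$, so $d(O_j)=0$ and $j\in Q'$. Because $i\in Q$ has $d(O_i)>0$, the vertex $\tau^{-1}i$ lies in $\olQ$ and Lemma \ref{tau-slice-hammock} together with the mesh structure of the stable $n$-translation quiver provides, from the arrow $\xa$, a bound path in $\olQ$ from $j$ through $i$ to $\tau^{-d(O_i)}i\in Q'$. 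The convexity of $Q'$ and the fact that both endpoints $j$ and $\tau^{-d(O_i)}i$ lie in $Q'$ then force every intermediate vertex to be in $Q'$, including $i$; this contradicts $i\notin Q'$, proving the claim.

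The main obstacle is the convexity-plus-hammock argument in the last paragraph: one has to identify a concrete bound path in $\olQ$ joining $j$ to $\tau^{-d(O_i)} i$ through $i$, exploiting the $\tau$-hammock description of the morphisms in $\olL$ so that convexity of $Q'$ can be invoked. Once this is set up cleanly against an infinite (though locally finite, with finitely many $\tau$-orbits) stable $n$-translation quiver, the inductive step and the dualisation for sinks are routine, and Lemma~6.4 of \cite{g12} is refined exactly as stated.
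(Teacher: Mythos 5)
The paper does not actually prove this proposition: it is stated as a refinement of Lemma 6.4 of \cite{g12} and the proof is omitted, so there is nothing to compare against line by line. Your argument is nevertheless correct and is the natural one for such a statement. The induction on $D(Q,Q')=\sum_O d(O)$ after normalising $Q'$ by a power of $\tau$, combined with the key lemma that a vertex of $V=\{i\in Q_0\mid d(O_i)>0\}$ with no incoming arrow from $V$ must already be a source of $Q$, does work. The convexity step is sound because in a stable $n$-translation quiver every vertex $i$ admits a bound path of length $n+1$ from $i$ to $\tau^{-1}i$ (all indecomposable projectives of $\olL$ have Loewy length $n+2$), so concatenating the arrow $\xa\colon j\to i$ with $d(O_i)$ such paths yields a path in $\olQ$ between two vertices of $Q'$ that passes through $i\notin Q'_0$, contradicting convexity of $Q'$.

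Two small points deserve attention. First, part (3) asserts equality $Q'=s^*_r\cdots s^*_1Q$, not isomorphism, so it is not literally an immediate consequence of (1): your normalisation only lands you on $\tau^{-m}Q'$, and you still need to observe either that $\tau^{\pm1}$ of a complete $\tau$-slice is obtained from it by a full admissible sweep of $s^-$ (respectively $s^+$) mutations, or, more directly, that your key lemma survives without normalisation (if some $d(O)>0$ there is a source $i$ of $Q$ with $d(O_i)>0$, and dually for negative values), so that one can induct on $\sum_O|d(O)|$ while mixing $s^+$ and $s^-$. Second, the replacement of $Q'$ by $\tau^{-m}Q'$ uses that $\tau$ is induced by an automorphism of the bound quiver (the Nakayama automorphism of $\olL$), which is worth stating explicitly. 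Neither point affects the correctness of the overall argument.
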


\section{Multi-layer quiver\label{mulq}}

Let $Q$ be a finite $n$-nicely-graded quiver.
Take a source $i_0$ of $Q$, such that $\vu_{i_0}(j) \ge 0$ for all $j\in Q_0$.
For the convenience of describing multi-layer quiver, reindex the vertices in $Q$ as a pair $(i,\vu(i))$ with $\vu(i)= \vu_{i_0}(i)$ for each $i\in Q_0$.
Thus $(i_0,0)$ is a vertex of $Q$ and we call the maximal index $u_Q =\max\{\vu_{i_0} (i)\mid i\in Q_0\}$  the {\em depth of $Q$}.
Write an arrow $\alpha$ from $i$ to $j$ naturally as $(\xa,\vu(i)): (i,\vu(i))\to (j,\vu(j))$ and write a path $p$ from $i$ to $j$ as $(p,\vu(i))$ from $(i,\vu(i))$ to $(j,\vu(i)+l(p))$.
We will write the second index as $u$ when no confusion appears and call it the {\em relative degree} (with respect to $i_0$).

For a path $p$ of length $l$ starting at $i$ or a linear combination $x$ of paths with the same length $l$ starting vertex $i$, write $l(p)=l(x)=l$ and $\vu_{i_0} (p)= \vu_{i_0} (x)=\vu_{i_0}(i)  $.
We will write  $\vu(i)$, $\vu(p)$, $\vu(x)$, or just $u$ when no confusion appears.

For an $n$-nicely-graded quiver $Q$, we define the {\em multi-layer quiver $\wht{Q}$} over $Q$ as the bound quiver with vertex set $$\wht{Q}_0 = \{(i,\vu(i),\vu(i)+r)\mid (i,\vu(i))\in Q_0,r = 0,1,\ldots,n+1\},$$ the arrow set $$ \arr{lll}{\wht{Q}_1 &=&
\{(\xa, \vu(i),\vu(i)+r):(i, \vu(i),\vu(i)+r)\to (j, \vu(j),\vu(j)+r)\mid  \\ && \qquad (\xa,\vu(i)):(i,\vu(i)) \to (j, \vu(j))\in Q_1,r = 0,1,\ldots,n+1\} \\ &&
\cup \{(\gamma_i, \vu(i),\vu(i)+r) : (i,\vu(i),\vu(i)+r) \to (i,\vu(i),\vu(i)+r+1) \mid  \\ && \qquad  (i, \vu(i)) \in Q_0, r=0,1,\ldots,n \} \\ && \cup  \{(\beta_p,\vu(j),\vu(j)): (j, \vu(j),\vu(j)) \to (i,\vu(i),\vu(i)+n+1)\mid p: i \rightsquigarrow j\in \caM\},}$$
and a relation set
$$\arr{lll}{\widehat{\rho} &=& \{(x,\vu(x), \vu(x)+r)\mid x\in \rho, 0\le r \le n+1\} \\ && \cup \{(\gamma_{i},\vu(i),\vu(i)+r+1)(\gamma_{i},\vu(i),\vu(i)+r)\mid i\in Q_0, 0\le r \le n-1\} \\ && \cup\{(\xa,\vu(i),\vu(i)+r+1)( \gamma_{i},\vu(i),\vu(i)+r) \\ && \quad - ( \gamma_{j},\vu(j),\vu(j)+r) (\xa,\vu(i),\vu(i)+r)\\ && \qquad \mid \xa:i\to j\in Q_1, 0\le r\le n \} \\ && \cup\{\sum_{f\in \caM}c_f (x_f, \vu({x_f}), \vu({x_f})+n+1) (\beta_{f}, \vu({x_f})+n, \vu({x_f})+n) \\ && \quad\cdot (y_f, \vu({y_f}), \vu({y_f})) \\ && \qquad \mid \sum_{f^*\in \caM^*}c_f x_f f^* y_f\in \rho_{\caM}\}.}$$

The full subquiver $Q[r]$ with the vertex set $\{(i,u,u+r)\mid (i,u)\in Q_0\}$ is called the {\em $r$th floor} of $\htQ$, for $r=0, 1,\ldots,n+1$.
All the floors are isomorphic to $Q$ as bound quivers.
For each vertex $(j,t,t+r)$ of quiver $Q[r]$, there is an arrow $(\gamma_{j},t, t+r)$ from this vertex to $(j,t,t+r+1)$ in $Q[r+1]$ for $0\le r\le n$.
For each arrow $\beta_{p}: (\vt(p),\vu(\vs(p))+n) \to (\vs(p),\vu(\vs(p)) $ in $\tQ$, there is an arrow $(\beta_{p},\vu(\vs(p))+n,\vu(\vs(p))+n): (\vt(p),\vu(\vs(p))+n,\vu(\vs(p))+n) \to (\vs(p),\vu(\vs(p)),\vu(\vs(p))+n+1) $ from $Q[0]$ to $Q[n+1]$.

We have the following proposition.

\begin{pro}\label{quiver} If $Q$ is an  $n$-nicely-graded quiver then $\htQ$ is an $(n+1)$-nicely-graded quiver.
\end{pro}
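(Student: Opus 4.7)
The plan is to verify in turn the two properties that make $\htQ$ an $(n+1)$-nicely-graded quiver: it must be nicely-graded, and it must be $(n+1)$-properly-graded.

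For the nicely-graded property, I apply Proposition \ref{nicegrd} by exhibiting an integer-valued grading function. Define $\hat{\vu}: \htQ_0 \to \zZ$ by reading off the third coordinate, $\hat{\vu}(i,u,s) = s$. It suffices to check that $\hat{\vu}$ increases by exactly $1$ along each of the three arrow types of $\htQ$: for $(\xa, u, u+r)$ of type $\xa$ the third coordinate moves $u+r \mapsto u+1+r$ since $\xa:i\to j$ in $Q$ forces $u(j)=u(i)+1$; for $(\gamma_i, u, u+r)$ of type $\gamma$ it moves $u+r \mapsto u+r+1$ tautologically; and for $(\beta_p, u(j), u(j))$ of type $\beta$, using that $p:i\rightsquigarrow j$ is a maximal bound path in $Q$ so $u(j)=u(i)+n$, the third coordinate moves $u(j)\mapsto u(i)+n+1=u(j)+1$.

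For the $(n+1)$-properly-graded property, the plan is to classify bound paths in $\htQ$ by the number of $\gamma$- and $\beta$-arrows they contain and to check that the maximal ones all have length $n+1$. The commutation relation in $\widehat{\rho}$ allows every $\gamma$-arrow within a path to be slid to either end; combined with $\gamma_i^2=0$, this forces any bound path to contain at most one $\gamma$-arrow. Since $\gamma$- and $\xa$-arrows never decrease the floor index while $\beta$-arrows jump from floor $0$ to floor $n+1$, any bound path containing a $\beta$ contains neither a $\gamma$ nor a second $\beta$. Thus bound paths fall into three disjoint families: (I) paths $\tilde{p}$ lifted from a single floor $Q[r]$; (II) paths equivalent under commutation to $\gamma_{\vs(p)}\tilde{p}$ crossing from floor $r$ to floor $r+1$; (III) paths of the form $\tilde{p}_2\cdot\beta_p\cdot\tilde{p}_1$ with $p:i\rightsquigarrow j$ a maximal bound path in $Q$, $p_1$ ending at $j$ in floor $0$ and $p_2$ starting at $i$ in floor $n+1$.

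Family (I) is never maximal, since any path in a single floor can be extended by a $\gamma$-arrow at an endpoint. A Type (II) path is maximal exactly when $p$ itself is a maximal bound path of $Q$ of length $n$, so the total length is $n+1$: $\xa$-extensions vanish by maximality of $p$ in $Q$, $\gamma$-extensions give $\gamma^2=0$, and $\beta$-extensions are blocked by floor considerations. The subtle case is family (III), the anticipated main obstacle. Here I plan to establish a length-preserving bijection between Type (III) bound paths in $\htQ$ and bound paths through the returning arrow $\beta_p$ in $\tQ$: the lifted $\rho_{\caM}$-relations in $\widehat{\rho}$ governing combinations $\xa\cdot\beta_p$ and $\beta_p\cdot\xa'$ are, by Lemma \ref{relzero}, exactly the corresponding relations in $\tQ$, so the bijection preserves maximality. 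Since $\tQ$ is a stable $n$-translation quiver, Proposition \ref{boundpath_rq} tells us that all its maximal bound paths have length $n+1$, so the same holds for Type (III) maximal bound paths in $\htQ$. Combining the three families proves that every maximal bound path in $\htQ$ has length $n+1$.
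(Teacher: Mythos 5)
Your proof is correct, and it splits into a half that mirrors the paper and a half that improves on it. For the $(n+1)$-properly-graded part your route is essentially the paper's: both arguments classify bound paths by how many arrows of type $\xc$ and $\xb$ they contain, use the commutation relations plus $\xc^2=0$ to cap the number of $\xc$-arrows at one, and observe that floor indices never decrease so a $\xb$-arrow excludes everything else; the only cosmetic difference is that for your family (III) you package the computation as a length- and maximality-preserving bijection with bound paths through $\xb_p$ in $\tQ$ and then quote Proposition \ref{boundpath_rq}, whereas the paper redoes the $q^*(p''q'p')$ computation by hand — the underlying content (Lemma \ref{relzero} and the definition of $\rho_{\caM}$) is the same. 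For the nicely-graded part, however, your argument is genuinely different and cleaner: you exhibit the explicit grading function $\hat{\vu}(i,u,s)=s$ and check that all three arrow types raise it by exactly $1$ (the $\xb$-case using $\vu(\vt(p))=\vu(\vs(p))+n$ for $p\in\caM$), then invoke Proposition \ref{nicegrd}. The paper instead argues directly on cyclic walks, projecting to the floor $Q[0]$ and claiming the $\xc$-arrows occur in cancelling pairs; that argument is more delicate (and as written does not explicitly account for $\xb$-arrows in the walk), so your version buys a shorter and more airtight verification. The only points you assert without detail are nonvanishing of the $\xc$-extensions used to rule out maximality in families (I) and (II); this is justified by the bimodule isomorphism $U\simeq\LL$ implicit in the relations (or by the paper's explicit exhibition of the extended bound path $p'\xc q p''$), and is no less detailed than the paper's own "it is easy to check."
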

\begin{proof}
Consider a path $\hat{p}$ in $\htQ$, if there are paths $p_1,\ldots,p_h$ in $Q$ satisfying $\vt(p_t)=\vs(p_{t+1})$ for $t=1, \ldots,h-1$ such that $$\arr{l}{\hat{p}= (p_h, \vu(\vs(p_h)),\vu(\vs(p_h))+h-1+r) (\gamma_{\vs(p_h)}, \vu(\vs(p_h)), \vu(\vs(p_h))+h-2+r) \\ \qquad \ldots (\gamma_{\vs(p_2)}, \vu(\vs(p_2)), \vu(\vs(p_2))+r) (p_1, \vu(\vs(p_1)), \vu(\vs(p_1))+r).}$$
Using the commutative relations concerning arrows of type $(\gamma_i, \vu(i),\vu(i)+r) $, we see that such path is a scalar of the path $$\arr{l}{(\gamma_{\vt(p_h)}, \vu(\vt(p_h)), \vu(\vt(p_h))+r+h-2) \ldots \\ \qquad (\gamma_{\vt(p_h)}, \vu(\vt(p_h)), \vu(\vt(p_h))+r)\cdot (p_h, \vu(\vs(p_h)),\vu(\vs(p_h))+r) \\ \qquad \ldots (p_1, \vu(\vs(p_1)), \vu(\vs(p_1))+r).}$$
So it is $0$ if either $h> 2$ or $\sum_{t}l(p_t) >n$.
Especially, it is zero in the quotient algebra if $l(\hat{p})> n+1$.

Otherwise, there are paths $p', p''$ in $Q$ and $q$ in $\caM$ such that $$\hat{p}= (p', \vu(\vs(p')),\vu(\vs(p'))+n+1) (\beta_q,\vu(\vt(q)),\vu(\vt(q))) (p'', \vu(\vs(p'')),\vu(\vs(p'')))$$ with $\vt(p'')=\vt(q)$ and $\vs(p')=\vs(q)$.
If  $l(\hat{p}) >n+1$, then $l(p')+l(p'') >n$, so we have that $$q^*(p''q'p') =0$$ for any path $q'$ in $Q$.
So $\hat{p}$ is in $\widehat{\rho}$ and $\hat{p}$ is zero in the quotient algebra.

This proves that bound paths has length at most $n+1$ in $\htQ$.

On the other hand, let $\hat{p}$ be a bound path of $\htQ$ with length $<n+1$.

If it is of the form $\hat{p} = (p, \vu(\vs(p)),\vu(\vs(p))+r)$ or $\hat{p} = (\gamma_{\vt(q)}, \vu(\vt(q)), \vu(\vt(q))+r)(q, \vu(\vs(q)),\vu(\vs(q))+r)$ with $q$ a path of length $l<n$.
Then $q$ is a bound path in $Q$, and we have paths $p',p''$ such that $p'q p''$ is a bound path of length $n$ in $Q$.
So $(p', \vu(\vs(p')),\vu(\vs(p'))+r+1)(\gamma_{\vt(q)}, \vu(\vt(q)), \vu(\vt(q))+r)(q, \vu(\vs(q)),\vu(\vs(q))+r)(p'', \vu(\vs(p'')),\vu(\vs(p''))+r)$
is a bound path of length $n+1$ in $\htQ$.
If for some $q\in \caM$, $\hat{p}= (p', \vu(\vs(p')),\vu(\vs(p'))+n+1) (\beta_q,\vu(\vt(q)),\vu(\vt(q))) (p'', \vu(\vs(p'')), \vu(\vs(p'')))$,
then there is a path $q'$ in $Q$, such that $l(p''q' p')= n$ and $q^*(p''q' p')\neq 0$, thus \small$(p', \vu(\vs(p')),\vu(\vs(p'))+n+1) (\beta_q,\vu(\vt(q)),\vu(\vt(q))) (p'', \vu(\vs(p'')), \vu(\vs(p''))) (q', \vu(\vs(q')), \vu(\vs(q')))$\normalsize is a bound path of length $n+1$ in $\htQ$.

This proves that $\htQ$ is an $(n+1)$-properly-graded quiver.

For a cyclic walk $\hat{w}$ with a vertex $(i,\vu(i),\vu(i)+r)$ in $\htQ$, its projection on the floor $Q[0]$ is a cyclic walk $w(0)$ in $Q[0]$ with a vertex $i$.
$\hat{w}$ is obtained from $w(0)$ by breaking $w(0)$ into pieces and put these pieces in different floors, and then connect them with arrows of type $(\gamma_i, \vu(i),\vu(i)+r) $.
The arrows of type $(\gamma_j, \vu(j),\vu(j)+h) $ appear in pairs to connect the pieces into a cyclic walk.
For each such pair of arrows of type $(\gamma_j, \vu(j),\vu(j)+h) $, when we go from a vertex $(i,\vu(i),\vu(i)+r)$
along the walk $\hat{w}$ to itself, the arrows are passed in opposite direction.
So $\vu_{(i,\vu(i),\vu(i)+r),\hat{w}}(i,\vu(i),\vu(i)+r) = 0$.

Thus $\htQ$ is nicely-graded and so $\htQ$ is an  $(n+1)$-nicely-graded quiver.
\end{proof}

\medskip

The following example shows how to construct a multi-layer quiver from an $1$-nicely-graded quiver $Q$ of type $\tilde{A}_1$.

\subsection*{Example}\label{ex1}
\[\arr{ll}{Q&\quad
\xymatrix@C=0.2cm@R0.5cm{
(1,0)\ar@/^/[rr]\ar@/_/[rr] && (2,1)
}\\
{}
\\
\htQ &
\xymatrix@C=0.2cm@R0.5cm{
(1,0,2)\ar@/^/[rr]\ar@/_/[rr] && (2,1,3)\\
(1,0,1)\ar@/^/[rr]\ar@/_/[rr]\ar[u] && (2,1,2)\ar[u]\\
(1,0,0)\ar@/^/[rr]\ar@/_/[rr]\ar[u] && (2,1,1)\ar[u]\ar@/^/[uull]\ar@/_/[uull]\\
}}
\]
This example appears in \cite{c16} as  the bound quiver of endomorphism algebra of some 2-hereditary tilting bundle $\mathcal T$ of some weighted surface.

\section{$n$-nicely-graded quivers\label{nicelygraded}}
We assume that $Q$ is acyclic in this section, in this case, $\zzs{n-1} Q$ is acyclic.

Note that for a finite stable $n$-translation quiver $\tQ$, $\zzv\tQ$ is always nicely-graded.
If $Q$ is not nicely-graded, $\zzs{n-1}Q$ is not nicely-graded, see Example \ref{nocycleho}.

Given an  $n$-nicely-graded quiver $Q= (Q_0,Q_1,\rho)$,  let $\tQ$ be its returning arrow quiver.
Since $Q$ is $n$-nicely-graded, it is acyclic, and the minimal cycles in  $\tQ$ have the same length $n+1$.
So $\zZ\mid _{n-1}Q$ is connected while $\zZ_{\vv} \tQ$ has $n+1$ isomorphic connected components, by Proposition 4.3 and 4.5  of \cite{g12}.
We are going to show that $\zZ\mid _{n-1} Q$ is nicely-graded when $Q$ is nicely-graded, so it is isomorphic to the connected components of $\zZ_{\vv} \tQ$.

By Lemma 2.1 of \cite{gw00}, we immediately get the following lemma.
\begin{lem}\label{two_bound_paths}
Let $\olQ$ be a stable $n$-translation quiver.
\begin{enumerate}
\item If $p_t$ and $p_{t+1}$ are two bound paths with $\vt(p_{t})=\vt(p_{t+1}) =\bbi $, then there are bound paths $p'_{t+1}$ and $p'_{t+2}$ with $\vs(p'_{t+1})=\vs(p'_{t+2}) =\tau \bbi$, $\vt(p'_{t+1})=\vs(p_{t})$ and $\vt(p'_{t+2})=\vs(p_{t+1})$ such that $$l(p_{t}) + l(p'_{t+1}) = n+1 =l(p_{t+1}) + l(p'_{t+2}).$$

\item If $p_{t}$ and $p_{t+1}$ are two bound paths with $\vs(p_{t}) = \vs(p_{t+1}) = \bbj$, then there are bound paths $p'_{t+1}$ and $p'_{t+2}$ with $\vt(p'_{t+1})=\vt(p'_{t+2}) = \tau^{-1}\bbj$, $\vs(p'_{t+1})=\vt(p_{t})$ and $\vs(p'_{t+2})=\vt(p_{t+1})$ such that $$l(p_{t}) + l(p'_{t+1}) = n+1 =l(p_{t+1}) + l(p'_{t+2}).$$
\end{enumerate}
\end{lem}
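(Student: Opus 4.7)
My plan is to derive both statements as direct applications of Lemma 2.1 of \cite{gw00}. Recall from Subsection \ref{stbq} that the stable $n$-translation quiver $\olQ$ is the bound quiver of a graded self-injective algebra $\olL = k\olQ/(\olrho)$ of Loewy length $n+2$, with the $n$-translation $\tau$ induced by the Nakayama permutation; in particular, each maximal bound path has length exactly $n+1$ and goes from $\tau i$ to $i$ for a unique vertex $i$. In this setting, the cited lemma asserts that every bound path admits a completion on each side to such a maximal bound path. Concretely, for any bound path $p$ ending at a vertex $\bbi$ there exists a bound path $p^{\flat}$ with $\vs(p^{\flat}) = \tau \bbi$, $\vt(p^{\flat}) = \vs(p)$, and $l(p) + l(p^{\flat}) = n+1$; dually, for any bound path $p$ starting at a vertex $\bbj$ there exists a bound path $p^{\sharp}$ with $\vs(p^{\sharp}) = \vt(p)$, $\vt(p^{\sharp}) = \tau^{-1}\bbj$, and $l(p) + l(p^{\sharp}) = n+1$. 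The algebraic input here is that $\soc(\olL e_{\bbi}) \simeq S_{\tau\bbi}$ is simple and the space $e_{\tau\bbi}\olL e_{\bbi}$ is one-dimensional, so any nonzero element of $e_{\vs(p)}\olL e_{\bbi}$ extends (on the left) to a generator of this socle.

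For part (1), I would apply the left completion to $p_t$ to produce $p'_{t+1}$ and independently to $p_{t+1}$ to produce $p'_{t+2}$; the endpoint and length identities demanded by the statement are precisely those furnished by the completion. For part (2), I would symmetrically apply the right completion to $p_t$ and $p_{t+1}$ to obtain $p'_{t+1}$ and $p'_{t+2}$; alternatively, one may pass to the opposite quiver $\olQ^{\mathrm{op}}$, which is again a stable $n$-translation quiver with $n$-translation $\tau^{-1}$, and invoke part (1) there.

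The only real obstacle is notational bookkeeping: matching the direction conventions for $\tau$, the left/right orientation of the completion, and the length normalisation (so that the sum is exactly $n+1$ rather than $n$ or $n+2$) between \cite{gw00} and the conventions fixed in Section \ref{pre}. Once these are aligned, the proof reduces to two invocations of the cited lemma, which is why the authors label it ``obvious''.
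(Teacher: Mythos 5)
Your proposal is correct and follows essentially the same route as the paper, which offers no argument beyond the citation ``By Lemma 2.1 of \cite{gw00}, we immediately get the following lemma'': both statements are obtained by completing each of $p_t$ and $p_{t+1}$ separately to a maximal bound path of length $n+1$ from $\tau\bbi$ to $\bbi$ (respectively from $\bbj$ to $\tau^{-1}\bbj$), using the self-injectivity of $\olL$. Your added explanation via the nondegenerate multiplication pairing into the one-dimensional top degree component is exactly the content of the cited lemma, modulo the direction conventions for $\tau$ that you already flag.
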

Using Lemma \ref{two_bound_paths}, we have the following lemma.
\begin{lem}\label{two_paths}
Let $\olQ$ be a stable $n$-translation quiver.
\begin{enumerate}
\item If $p_0=q_0p'_0$ and $p_1=q_1p'_1$ are two paths with $\vt(p_0)=\vt(p_1)$, such that $q_0$ and $q_1$ are both bound paths with $0 < l(q_0),l(q_1) \le n+1$, then there are bound paths $p''_0$ and $p''_1$ with $\vs(p''_0)=\vs(p''_1)$, $\vt(p''_0)=\vt(p'_0)$ and $\vt(p''_1)=\vt(p'_1)$ such that $$l(p_0) - l(p_1) = l(p'_0)-l(p''_0)+l(p''_1)-l(p'_1).$$

\item If $p_1=p'_1q_1$ and $p_2=p'_2q_2$ are two paths with $\vs(p_1)=\vs(p_2)$, such that $q_1$ and $q_2$ are both bound paths with $0 < l(q_1),l(q_2) \le n+1$, then there are bound paths $p''_1$ and $p''_2$ with $\vt(p''_1)=\vt(p''_2)$, $\vs(p''_1)=\vs(p'_1)$ and $\vs(p''_2)=\vs(p'_2)$ such that $$l(p_1) - l(p_2) = l(p_1')-l(p''_1)+l(p''_2)-l(p'_2).$$
\end{enumerate}
\end{lem}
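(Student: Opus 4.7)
The plan is to reduce both parts immediately to Lemma \ref{two_bound_paths}. The factorizations $p_i = q_i p'_i$ (respectively $p_i = p'_i q_i$) are engineered precisely so that the distinguished pieces $q_i$ are bound paths of length at most $n+1$ sharing an endpoint, and Lemma \ref{two_bound_paths} supplies companion bound paths whose lengths complement $l(q_i)$ to $n+1$. The outer pieces $p'_i$ are then left untouched, and the advertised identity drops out of simple length arithmetic.

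Concretely, for part (1), set $\bbi = \vt(p_0) = \vt(p_1) = \vt(q_0) = \vt(q_1)$. Since $0 < l(q_0), l(q_1) \le n+1$, both $q_0$ and $q_1$ qualify as input to Lemma \ref{two_bound_paths}(1), which produces bound paths $p''_0, p''_1$ with common source $\tau \bbi$ and targets $\vt(p''_0) = \vs(q_0) = \vt(p'_0)$, $\vt(p''_1) = \vs(q_1) = \vt(p'_1)$, satisfying
\[
l(q_0) + l(p''_0) \;=\; n+1 \;=\; l(q_1) + l(p''_1).
\]
Subtracting gives $l(q_0) - l(q_1) = l(p''_1) - l(p''_0)$, and combining this with the obvious $l(p_0) - l(p_1) = (l(q_0) + l(p'_0)) - (l(q_1) + l(p'_1))$ yields the required formula $l(p_0) - l(p_1) = l(p'_0) - l(p''_0) + l(p''_1) - l(p'_1)$.

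Part (2) is entirely symmetric: with $\bbj = \vs(p_1) = \vs(p_2) = \vs(q_1) = \vs(q_2)$, Lemma \ref{two_bound_paths}(2) applied to $q_1, q_2$ yields bound paths $p''_1, p''_2$ with common target $\tau^{-1}\bbj$ and sources $\vs(p''_i) = \vt(q_i) = \vs(p'_i)$, and the same length-arithmetic step closes the argument.

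The only subtlety to verify is that Lemma \ref{two_bound_paths} genuinely applies to the pieces $q_i$; this is secured exactly by the hypothesis $0 < l(q_i) \le n+1$ together with their being bound. No combinatorial obstacle beyond this routine bookkeeping is anticipated, and I expect the proof to be essentially immediate once this reduction is stated cleanly.
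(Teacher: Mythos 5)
Your proposal is correct and follows essentially the same route as the paper: apply Lemma \ref{two_bound_paths} to the bound pieces $q_0,q_1$ (resp.\ $q_1,q_2$) to obtain the complementary bound paths $p''_i$ with $l(q_i)+l(p''_i)=n+1$, then conclude by the same length arithmetic. Nothing further is needed.
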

\begin{proof}
We prove the first assertion.
Let $\bbi= \vt(p_0) = \vt(p_1)$.
By Lemma \ref{two_bound_paths}, there are bound paths $p''_{0}$ from $\tau \bbi$ to $\vs(q_{0})=\vt(p'_0)$ and  $p''_{1}$ from $\tau \bbi$ to $\vs(q_{1})=\vt(p'_1)$ in $\olQ$ such that the paths $ q_{0}p''_{0}$ and $q_{1} p''_{1}$ are both bound paths of length $n+1$ from $\tau \bbi$ to $\bbi$  in $\olQ$.
So \eqqcn{}{\arr{lll}{l(p_0) - l(p_1)& = &l(p'_0)+l(q_0) -(l(p'_1)-l(q_1))\\ &=&  l(p_0')+(n+1-l(p''_0)) -((n+1-l(p''_1))+l(p'_1))\\
&=&  l(p_0')-l(p''_0) +l(p''_1)-l(p'_1).}}
This proves the first assertion.
The second assertion follows similarly.
\end{proof}

Now we consider how the function $\vu$ changes under the $\tau$-mutations on a complete $\tau$-slice in an acyclic stable $n$-translation quiver  $\olQ$.
\begin{lem}\label{u-inv-taumut}
Let $Q$ be a complete $\tau$-slice in an acyclic stable $n$-translation quiver $\olQ$.
Assume that $\bbi,\bbi_0$ are vertices in $Q$ different from $\bbj$.
\begin{enumerate}
\item If $\bbj$ is a sink of $Q$ and $w$ is a walk from $\bbi_0$ to $\bbi$ with $\bbj$ a sink of $w$, then there is a walk $w'$ in $s^-_{\bbj} Q$ from $\bbi_0$ to $\bbi$ with $\tau \bbj$ a source such that $$\vu_{\bbi_0,w} (\bbi) = \vu_{\bbi_0,w'} (\bbi). $$

\item If $\bbj$ is a source of $Q$ and $w$ is a walk from $\bbi_0$ to $\bbi$ with $\bbj$ a source of $w$, then there is a walk $w'$ in $s^+_{\bbj} Q$ from $\bbi_0$ to $\bbi$ with $\tau^- \bbj$ a sink such that $$\vu_{\bbi_0,w} (\bbi) = \vu_{\bbi_0,w'} (\bbi). $$
\end{enumerate}
\end{lem}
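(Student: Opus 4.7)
My plan is to prove the first assertion; the second is entirely dual. The key idea is to replace each visit of the walk $w$ to the sink $\bbj$ by a detour through $\tau\bbj$, using Lemma \ref{two_bound_paths} to guarantee that the detour has the correct total length.

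Concretely, fix an index $h$ with $\bbj=\vt(p_{2h})=\vt(p_{2h+1})$. Since $\bbi_0\neq \bbj\neq \bbi$ and since all interior paths in a walk have positive length, both $p_{2h}$ and $p_{2h+1}$ end with a genuine arrow; factor $p_{2h}=q_{2h}p'_{2h}$ and $p_{2h+1}=q_{2h+1}p'_{2h+1}$ with $q_{2h},q_{2h+1}$ these final arrows. By Lemma \ref{two_bound_paths}(1) applied at $\bbj$, there are bound paths $p''_{2h},p''_{2h+1}$ in $\olQ$, both starting at $\tau\bbj$, with $\vt(p''_{2h})=\vs(q_{2h})=\vt(p'_{2h})$ and $\vt(p''_{2h+1})=\vs(q_{2h+1})=\vt(p'_{2h+1})$, satisfying
\[
l(q_{2h})+l(p''_{2h})=n+1=l(q_{2h+1})+l(p''_{2h+1}).
\]
I replace the pair $(p_{2h},p_{2h+1})$ in $w$ by the quadruple $(p'_{2h},p''_{2h},p''_{2h+1},p'_{2h+1})$ and do this at every occurrence of $\bbj$ as a sink of $w$; call the resulting sequence $w'$.

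Next I verify that $w'$ is a legitimate walk in $s^-_{\bbj}Q$ from $\bbi_0$ to $\bbi$ with $\tau\bbj$ appearing only at source positions. The endpoint matchings are built into the construction: at the insertion, the two $p''$-paths share source $\tau\bbj$ (the new source) and match the flanking $p'$-paths at the old sink pattern. The $p'$-paths are truncations of paths in $Q$ that do not touch $\bbj$, hence lie in $s^-_{\bbj}Q$. Each $p''$-path lies entirely inside the $\tau$-hammock $H_{\bbj}$, so by Lemma \ref{tau-slice-hammock} all of its vertices except its starting vertex $\tau\bbj$ belong to $Q$, and fullness of $s^-_{\bbj}Q$ inside $\olQ$ then puts every arrow of $p''_{2h},p''_{2h+1}$ in $s^-_{\bbj}Q$.

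Finally I compute the grade. Each replacement inserts two new paths, shifting the indices of the remaining entries by $2$ and therefore preserving their parities; so contributions away from the replacements are unchanged. At a replacement the original contribution $(-1)^{2h}l(p_{2h})+(-1)^{2h+1}l(p_{2h+1})=l(p_{2h})-l(p_{2h+1})$ is replaced by
\[
l(p'_{2h})-l(p''_{2h})+l(p''_{2h+1})-l(p'_{2h+1}),
\]
and using $l(p_{2h})=l(q_{2h})+l(p'_{2h})$, $l(q_{2h})=n+1-l(p''_{2h})$, and the analogous relations at $2h+1$, these two expressions agree. The main obstacle is a purely bookkeeping one: some $p'_{2h}$ or $p'_{2h+1}$ may have length $0$, violating the interior positivity condition for walks. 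In such a degenerate case the corresponding trivial path and an adjacent one can be absorbed together (the sums of signed lengths are unaffected by this cleanup), so that after trimming, $w'$ is a genuine walk with the claimed properties.
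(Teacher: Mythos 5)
Your proof is correct and follows essentially the same route as the paper's: replace each visit of $w$ to the sink $\bbj$ by a detour through $\tau\bbj$ supplied by the bound-path completion, check via Lemma \ref{tau-slice-hammock} that the new pieces lie in $s^-_{\bbj}Q$, and verify that the signed-length bookkeeping is unchanged. The only (harmless) difference is that you split off just the terminal arrow and invoke Lemma \ref{two_bound_paths} directly, where the paper splits off a terminal bound segment and uses Lemma \ref{two_paths}; your explicit treatment of the degenerate trivial-path case is a point the paper's own proof leaves implicit.
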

\begin{proof}
We prove the first assertion, the second follows dually.

Let ${w}= (p_0, \ldots, p_{r})$ be an acyclic walk from $\bbi_0$ to $\bbi$ in $\olQ$.
Assume that $\vt(p_{2h})=\vt(p_{2h+1})$ is the first sink equals $\bbj$.
Let $p_{2h} = q_{2h}q'_{2h}$ and  $p_{2h+1} = q_{2h+1}q'_{2h+1}$ such that $q_{2h}$ and $q_{2h+1}$ are bound paths in $Q$, the lengths of $q_{2h}$ and $q_{2h+1}$ are both $\le n$.

By Lemma \ref{two_paths}, there are bound paths $q''_{2h}$ from $\tau \bbj$ to $\vs(q_{2h})$ and  $q''_{2h+1}$ from $\tau \bbj$ to $\vs(q_{2h+1})$ in $\olQ$ such that
\eqqc{equalaaa}{
l(q'_{2h}) -  l(q_{2h}'') +l(q_{2h+1}'') -l(q'_{2h+1}) \\ =l(p_{2h})-l(p_{2h+1}).}

Since $Q$ is a complete $\tau$-slice, all the vertices of $ q_{2h}q''_{2h}$ and $q_{2h+1} q''_{2h+1}$ are in $Q$ except for $\tau \bbj$.
So $q''_{2h+1}$ and $q''_{2h+1}$ are both in $s^-_{\bbj} Q$.

Let $p'_t =p_t$, for $t\le 2h-1$,  $p'_{2h}=q'_{2h}, p'_{2h+1}= q''_{2h}, p'_{2h+2} = q_{2h+1}'', p'_{2h+3}= q'_{2h+1}$, and $p'_{t+2} =p_t$ for $2h+2\le t \le r$, then
$$
w' = (p'_0, \ldots, p'_{2h-1}, p'_{2h}, p'_{2h+1}, p'_{2h+2}, p'_{2h+3}, p'_{2h+4}, \ldots, p'_{r+2})$$
is a walk in $\olQ$ which lies in the full subquiver $Q'$ with the vertex set obtained by  the union of those of $Q$ and of $\tau$-mutation $s^-_{\bbj} Q$, and $w'$ has one less sinks equal $\bbj$.
By equation \eqref{equalaaa}, we have $\vu_{\bbi_0,w'}(\bbi) =\vu_{\bbi_0,w}(\bbi)$.

Repeat the above process, we obtain a walk $w''$ in $Q'$ such that $\vu_{\bbi_0,w''}(\bbi) =\vu_{\bbi_0,w}(\bbi)$ and  no sink of $w''$ equals $\bbj$.
So $w''$ is in $s_{\bbj}^- Q$, as is required.
\end{proof}
So we have the following
\begin{pro}\label{nicely-inv-mut}
Let $Q$ be a complete  $\tau$-slice in an acyclic stable $n$-translation quiver $\olQ$.
If $Q$ is nicely-graded, its $\tau$-mutations are all nicely-graded.
\end{pro}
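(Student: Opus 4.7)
The plan is to use Proposition \ref{nicegrd}, which characterizes nicely-gradedness via the vanishing of $\vu_{\vs(w'),w'}(\vt(w'))$ on every cyclic walk $w'$, and to transport cyclic walks in a $\tau$-mutated slice back to cyclic walks in $Q$ while keeping the $\vu$-grade unchanged. Since the hypothesis on $Q$ gives vanishing $\vu$-grade on the latter, the mutated slice will inherit the property.

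First I would treat a single sink mutation $s_\bbj^- Q$ where $\bbj$ is a sink of $Q$. Fix an arbitrary cyclic walk $w'$ in $s_\bbj^- Q$ from a vertex $\bbi_0$ to itself. If the new vertex $\tau\bbj$ does not appear in $w'$, then $w'$ lies in the common full subquiver $Q \cap s_\bbj^- Q$, hence is a cyclic walk in $Q$, and by hypothesis $\vu_{\bbi_0, w'}(\bbi_0) = 0$. Otherwise $\tau\bbj$ appears in $w'$, and since $\tau\bbj$ is a source of $s_\bbj^- Q$, it is a source of $w'$ at every occurrence. Using the identity $s_{\tau\bbj}^+ s_\bbj^- Q = Q$, I would apply Lemma \ref{u-inv-taumut}(2) (with $s_\bbj^- Q$ in the role of $Q$) once for each occurrence of $\tau\bbj$, thereby producing a walk $w$ from $\bbi_0$ to $\bbi_0$ entirely contained in $Q$, and satisfying $\vu_{\bbi_0, w}(\bbi_0) = \vu_{\bbi_0, w'}(\bbi_0)$. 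Since $Q$ is nicely-graded, the left-hand side is zero, hence so is the right. A completely dual argument handles a source mutation $s_\bbi^+ Q$, using Lemma \ref{u-inv-taumut}(1) and the identity $s_{\tau^{-1}\bbi}^- s_\bbi^+ Q = Q$. This proves nicely-gradedness is preserved under a single $\tau$-mutation.

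For arbitrary $\tau$-mutations, I would then iterate: by Proposition \ref{tauslicemut}(3), any complete $\tau$-slice reachable from $Q$ via $\tau$-mutations is obtained by a finite composition of single source or sink mutations, so the single-step preservation just established propagates to all $\tau$-mutations. An alternative, more conceptual route would be to produce a grading function $\vu'$ on $s_\bbj^- Q$ directly from $\vu$ by setting $\vu'(i) = \vu(i)$ for $i \neq \tau\bbj$ and $\vu'(\tau\bbj) = \vu(\bbj) - (n+1)$, then verifying that this is consistent with the arrows of $s_\bbj^- Q$ via Lemma \ref{two_bound_paths}; this gives nicely-gradedness by the criterion of Proposition \ref{nicegrd}.

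The main obstacle is the reverse application of Lemma \ref{u-inv-taumut}: the lemma is phrased going from $Q$ to $s_\bbj^- Q$, but I need to move a walk from $s_\bbj^- Q$ back to $Q$. Using $s_{\tau\bbj}^+ (s_\bbj^- Q) = Q$ and part (2) of the lemma, this should go through, but one must be careful that the inductive removal of occurrences of $\tau\bbj$ indeed terminates with a walk in $Q$ (not in some further mutation) and that each local surgery preserves both the cyclic structure and the endpoints. Once this bookkeeping is confirmed, the rest of the argument is a straightforward application of the criterion of Proposition \ref{nicegrd}.
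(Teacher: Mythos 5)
Your proposal follows essentially the same route as the paper: there, one likewise writes $Q$ as a mutation of the mutated slice ($Q=s^-_{\bbj'}Q'$ with $\bbj'=\tau^{-1}\bbj$) and applies Lemma \ref{u-inv-taumut} with the mutated slice in the role of the lemma's base quiver, transporting each cyclic walk back into $Q$ with its grade unchanged and concluding from nicely-gradedness of $Q$. The one point you flag but do not fully settle --- the case where the cyclic walk is based at the newly added vertex, which the hypotheses of Lemma \ref{u-inv-taumut} exclude --- is resolved in the paper by invoking the proof of that lemma and the observation that $\vu_{i,w}(i)=0$ at one vertex of a cyclic walk forces the same at every vertex of the walk.
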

\begin{proof}
Let $\bbj$ be a source of $Q$ and let $Q'=s^+_{\bbj} Q$ be the $\tau$-mutation of $Q$ with respect to $\bbj$.
Then $\bbj'=\tau^{-1} \bbj$ is a sink of $Q'$ and $Q=s^-_{\bbj'} Q'$.
Let $w= (p_0, \ldots, p_{2r+1})$ be a cyclic walk from $\bbi$ to $\bbi$ in $Q'$.
If $\bbj'$ is not a sink of $w$, then $w$ is a walk in $Q$, and we have $\vu_{\bbi,w}(\bbi) =0$.
Otherwise, if $\bbj'$ is a sink of $w$ and $\bbi\neq \bbj$, then $w$ is not in $Q$, and by Lemma \ref{u-inv-taumut}, there is a walk $w'$ in $Q$ from $\bbi$ to $\bbi$ such that $$\vu_{\bbi,w}(\bbi)=\vu_{\bbi,w'}(\bbi)=0,$$ since $Q$ is nicely-graded.

If $\bbi=\bbj'$, then by the proof of Lemma \ref{u-inv-taumut}, we have a cyclic walk $w'$ of $Q$ such that for any vertex $\bbi'\neq \bbj'$, $$\vu_{\bbi',w}(\bbi')=\vu_{\bbi',w'}(\bbi')=0,$$
since $Q$ is nicely-graded.
So $$\vu_{\bbj',w}(\bbj')=\vu_{\bbi',w}(\bbi')=0.$$
This proves that $Q'$ is nicely-graded.

The case for $\bbj$ to be a sink can be  proved similarly.
\end{proof}

Clearly, the depth of a complete $\tau$-slice $Q$ in an $n$-translation quiver is no less than $n$.

If the depth of $Q$ is larger than $n$, then  for a vertex $i\in Q_0$, there is finite many $\bbj\in Q_0$ such that $\vu_{\bbi,w}(\bbj)$ take the maximal value.
Take such one, say $\bbj$, then $\bbj$ is a sink in $Q$, and $\tau \bbj$ is a source in $s^-_{\bbj} Q$.
There is a vertex $\bbj'$ such that there is an arrow $\xa$ from $\bbj'$ to $\bbj$ and there is a path $p$ of length $n$ in $s^-_{\bbj} Q$ such that $\xa p$ is a bound path in $\olQ$.
We may assume that $\bbi$ is in both $Q$ and $s^-_{\bbj} Q$.
Then we have that $\vu_{\bbi, w\xa p}(\tau \bbj)= \vu_{\bbi,w}( \bbj) - l(\xa p) = \vu_{\bbi,w} (\bbj) -1-n $ since both $Q$ and $s^-_{\bbj} Q$ are nicely-graded.
Inductively, we have the following proposition.
\begin{pro}\label{depth}
If $Q$ is a nicely-graded complete $\tau$-slice in a stable $n$-translation quiver $\olQ$, then there is a sequence $\xd_1,\cdots,\xd_r$ of $\tau$-mutations such that the depth of $\xd_r\cdots\xd_1 Q$ is $n$.
\end{pro}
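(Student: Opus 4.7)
The plan is to induct on the pair $(u_Q, k_Q)$ in lexicographic order, where $k_Q$ denotes the number of vertices of $Q$ attaining the maximum $\vu$-value. The base case $u_Q = n$ is trivial, so assume $u_Q = M > n$ and pick any vertex $\bbj$ with $\vu(\bbj) = M$. First I observe that $\bbj$ must be a sink of $Q$, since any arrow out of $\bbj$ would force its target to have $\vu$-value $M+1$, contradicting maximality. Hence the mutation $Q' = s_{\bbj}^- Q$ is defined, and by Proposition \ref{nicely-inv-mut} it is again nicely-graded.

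Next I would verify that $\vu(\tau \bbj) = M - (n+1)$ in $Q'$, exactly as sketched in the paragraph preceding the statement. Choose a vertex $\bbj'$ with an arrow $\xa:\bbj'\to \bbj$ in $\olQ$ and a bound path $p$ of length $n$ starting at $\bbj'$ such that $\xa p$ is a maximal bound path in $\olQ$. Because $\bbj$ is a sink of $Q$, Lemma \ref{tau-slice-hammock} places all vertices of the hammock $H_{\bbj}$ except $\tau \bbj$ inside $Q$, hence inside $Q'$; in particular $p$ lies entirely in $Q'$. Extending a walk from the reference source to $\bbj$ by $\xa p$ yields $\vu(\tau \bbj) = \vu(\bbj) - l(\xa p) = M - (n+1)$, while Lemma \ref{u-inv-taumut} guarantees that every other vertex common to $Q$ and $Q'$ retains its $\vu$-value.

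Now I compare $(u_{Q'}, k_{Q'})$ with $(u_Q, k_Q)$. Write $m$ for the minimum of $\vu$ on $Q$; the assumption $u_Q > n$ gives $m \le M - (n+1)$, so the freshly introduced vertex $\tau \bbj$ sits at a value in $[m, M-1]$ and the minimum is unchanged. If $k_Q \ge 2$ then $u_{Q'} = u_Q$ while $k_{Q'} = k_Q - 1$; if $k_Q = 1$ then $u_{Q'} \le M - 1 < u_Q$. Either way $(u_{Q'}, k_{Q'})$ drops strictly in the lexicographic order, and the induction hypothesis produces the required finite sequence of $\tau$-mutations.

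The only genuine obstacle is exhibiting the auxiliary path $\xa p$ of length $n+1$ in $\olQ$ so that $p$ really lives in $Q'$; once the hammock $H_{\bbj}$ is available inside $\olQ$, Lemma \ref{tau-slice-hammock} and the stable $n$-translation property settle this, and everything else is the bookkeeping of $\vu$-values already controlled by Lemma \ref{u-inv-taumut} and Proposition \ref{nicely-inv-mut}.
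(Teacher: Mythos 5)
Your proposal is correct and follows essentially the same route as the paper: locate a vertex of maximal $\vu$-value, observe it is a sink, mutate there, and use a bound path of length $n+1$ ending at that sink to see the newly created vertex $\tau\bbj$ lands at value $M-(n+1)\ge 0$. The only addition is your explicit lexicographic induction on $(u_Q,k_Q)$, which makes precise the termination that the paper dispatches with ``inductively''; this is a welcome refinement rather than a different argument.
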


Given a path $p$, we can write $p=p_1q$  such that $p_1$ is a maximal bound path.
Write $d(p) = d$ for the number of paths $p_1,\cdots, p_{d}$ such that $p= p_1\cdots p_{d}$ and $p_t$ is the maximal bound path in $p_t\cdots p_{d}$ for $1\le t\le d$.
If $p$ contains an arrow $\xb_q$ of type $\xb$, write $p= p_1 \xb_q p_0$, write $d'(p) =d (p_1 \xb_q) $  if $p_0$ contains no arrow of type $\xb$, and  write $d''(p) =d (\xb_q p_0) $  if $p_1$ contains no arrow of type $\xb$.

\medskip

Now we consider a walk of forms $V$ or $\Lambda$  in $\olQ$.
\begin{lem}\label{redV}
\begin{enumerate}
\item \label{redv1}
Let $w=(q_0,q_1)$ be a walk from $\bbi_0$ to $\bbi_1$ such that $q_0$ is path from $\bbi_0$ to  $\bbj$ and $q_1$ is a  path from $\bbi_1$ to  $\bbj$.
If the starting arrow of $q_0$ is not of type $\xb$, then there is a walk $w'=(q_0',q_1')$ or $w'=(p_0,p_1,q_0',q_1')$ from $\bbi_0$ to $\bbi_1$, such that $p_0,p_1$ contain no arrow of type $\xb$, the starting arrow of $q'_0$ is not of type $\xb$, $d(q'_1) + d'(q'_0)< d(q_1)+d'(q_0) $ and $$\vu_{\bbi_0, w'}(\bbi_1) =\vu_{\bbi_0,w}(\bbi_1).$$

\item\label{redv2} Let $w=(q_1,q_2)$ be a walk from $\bbj_1$ to $\bbj_2$ such that $q_1$ is a  path from $\bbi$ to  $\bbj_1$ and $q_2$ is a  path from $\bbi$ to  $\bbj_2$.
If the ending arrow of $q_1$ is not of type $\xb$, then there is a walk $w''=(q_1',q_2')$ or $w''=(p_1,p_2,q_1',q_2')$ from $\bbj_1$ to $\bbj_2$, such that $p_1,p_2$ contain no arrow of type $\xb$, the ending arrow of $q'_1$ is not of type $\xb$, $d(q'_2)+ d''(q'_1) < d(q_2)+ d''(q_1)$ and $$\vu_{\bbj_1,w''}(\bbj_2) =\vu_{\bbj_1,w}(\bbj_2).$$
\end{enumerate}
\end{lem}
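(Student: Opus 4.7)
The plan is to prove Part (1) in detail; Part (2) then follows by the dual argument, reversing the roles of sources and sinks and replacing Lemma \ref{two_bound_paths}(1) and Lemma \ref{two_paths}(1) by their versions (2). The core idea is a single mesh-style swap at the common top $\bbj$ of the V-walk $w=(q_0,q_1)$, which preserves the $(i,w)$-grade by the complementary length relation $l(r)+l(s)=n+1$ built into Lemma \ref{two_bound_paths}.

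I would begin by decomposing $q_0 = q_0'' r_0$ and $q_1 = q_1'' r_1$, where $r_0, r_1$ are the terminal maximal bound-path segments ending at $\bbj$, with $0 < l(r_0), l(r_1) \le n+1$. Lemma \ref{two_bound_paths}(1) then furnishes bound paths $s_0 : \tau\bbj \to \vt(q_0'')$ and $s_1 : \tau\bbj \to \vt(q_1'')$ satisfying $l(r_0)+l(s_0) = n+1 = l(r_1)+l(s_1)$. The candidate new walk is
\[
w' \;=\; (q_0'',\, s_0,\, s_1,\, q_1''),
\]
a W-shape from $\bbi_0$ to $\bbi_1$ passing through $\tau\bbj$ at the bottom; I take $p_0 = q_0''$, $p_1 = s_0$, and relabel the trailing V as $(q_0', q_1') = (s_1, q_1'')$. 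When $q_0''$ is trivial the prefix collapses and I present $w'$ instead in the form $(q_0', q_1')$ directly as a V-walk.

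Grade preservation is a one-line computation:
\[
\vu_{\bbi_0, w'}(\bbi_1) \;=\; l(q_0'') - l(s_0) + l(s_1) - l(q_1'') \;=\; l(q_0) - l(q_1) + \bigl((l(r_1)+l(s_1)) - (l(r_0)+l(s_0))\bigr) \;=\; l(q_0) - l(q_1) \;=\; \vu_{\bbi_0, w}(\bbi_1),
\]
independent of the particular lengths involved.

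The principal obstacle is to simultaneously meet the three structural requirements on $w'$: that $p_0$ and $p_1$ contain no arrow of type $\beta$, that the starting arrow of $q_0'$ is not of type $\beta$, and that the complexity $d(q_1')+d'(q_0')$ strictly drops. This forces a case analysis on whether the last $\beta$-arrow of $q_0$ sits inside $q_0''$ or inside $r_0$, on whether the bound paths $s_0, s_1$ themselves cross a returning $\beta$-arrow (equivalently whether they change floor in $\zzs{n-1}$), and on whether $q_0''$ or $q_1''$ is trivial; in some configurations I would apply the sharper Lemma \ref{two_paths}(1) instead of Lemma \ref{two_bound_paths}(1), which allows one to split off only a bound-path tail of an appropriate length rather than the entire terminal piece, and possibly iterate the swap. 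The hypothesis that the starting arrow of $q_0$ is not of type $\beta$ is used precisely to guarantee that the decomposition $q_0=q_0''r_0$ does not force $q_0$ into a single $\beta$-initial bound path, and to propagate this same property to the new initial arrow. The strict complexity decrease then follows because the terminal pieces $r_0, r_1$ are removed from the V at $\bbj$, so $d(q_1)$ drops by one while $s_1$ contributes only a single bound-path piece to $q_0'$, keeping $d'(q_0')$ at most $d'(q_0)$.
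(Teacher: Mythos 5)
Your mesh-swap idea and the grade computation $l(q_0'')-l(s_0)+l(s_1)-l(q_1'')=l(q_0)-l(q_1)$ are the right ingredients, but the construction as proposed has a genuine gap: a single swap at the top vertex $\bbj$ cannot in general meet the structural requirements of the conclusion. You set $p_0=q_0''$, the complement of the terminal maximal bound piece $r_0$ of $q_0$; the lemma requires $p_0$ to contain no arrow of type $\xb$, but $q_0''$ inherits every $\xb$-arrow of $q_0$ that does not lie in $r_0$ --- which is the typical situation, since the first $\xb$-arrow of $q_0$ may sit many bound-path pieces below $\bbj$ (this is what $d'(q_0)$ measures). You acknowledge this and propose to ``iterate the swap,'' but naive iteration is incompatible with the required output format: each further swap at a lower vertex inserts two more pieces into the walk, whereas the conclusion demands a walk of exactly the form $(q_0',q_1')$ or $(p_0,p_1,q_0',q_1')$. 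This is precisely where the content of the lemma lies, and it is left unresolved in your sketch.

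The paper's proof uses a different decomposition that resolves this. It splits off the entire $\xb$-containing prefix of $q_0$, writing $q_0=q_{0,1}\bxb q_{0,0}$ with $q_{0,0}$ $\xb$-free and $q_{0,1}\bxb=q_0\sk{1}\cdots q_0\sk{c'}$ its greedy bound-path decomposition ($c'=d'(q_0)$), while peeling only the single terminal bound piece $q_1\sk{1}$ off $q_1$. It then applies the mesh completion successively down this chain, producing pieces $\bp_0\sk{1},\ldots,\bp_0\sk{c'}$ which by construction concatenate into a single path $q_0'=\bp_0\sk{1}\cdots\bp_0\sk{c'}$, so the output still has at most four pieces; the leftover connector $\bp_1\sk{c'+1}$ lies in a common bound path of length $n+1$ with $q_0\sk{c'}$, which already contains $\bxb$, and hence contains no $\xb$-arrow, so one may take $p_0=q_{0,0}$ and $p_1=\bp_1\sk{c'+1}$. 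The degenerate cases (some $q_0\sk{t}$ or $q_1\sk{1}$ of length $n+1$, or the chain closing up at $\tau\vs(q_0)$) are exactly the ones where the walk collapses to the two-piece form $(q_0',q_1')$. To repair your argument you would need to reorganize it along these lines; the single top-level swap, even with the case analysis you outline, does not by itself yield the claimed walk.
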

\begin{proof}
We prove the first assertion  (\ref{redv1}).

Let $c'=d'(q_0)$ and $c =d(q_1)$, and assume that $q_0=q_{0,1}\bxb q_{0,0}$ and $q_{0,1}\bxb = q_{0}\sk{1}\cdots q_{0}\sk{c'}$ for bound paths $q_{0}\sk{1}, \cdots, q_{0}\sk{c'}$, the arrow $\bxb$ of type $\xb$,  and  $q_{1} = q_{1}\sk{1} \cdots q_{1}\sk{c}$ for bound paths  $q_{1}\sk{1}, \cdots, q_{1}\sk{c}$.

Set $\bp_{1}\sk{1}= q_{1}\sk{1}$.

Apply (2) of Lemma \ref{two_paths} successively on $q_{0}\sk{t}$ and $\bp_{1}\sk{t}$, we get  $\bp_{1}\sk{t+1}$ and $\bp_{0}\sk{t}$ such that $q_{0}\sk{t} \bp_{1}\sk{t+1}$ and $\bp_{1}\sk{t} \bp_{0}\sk{t}$ are bound paths of length $n+1$ in $\zzs{n-1}Q$.

If $l(\bp_{1}\sk{1})=l(q_1\sk{1}) = n+1$, then $l(\bp_0\sk{1})=0$ and $\bp_0\sk{1}$ is a trivial path, set  $q'_{0} = q_{0}\sk{2}\cdots q_{0}\sk{c'}q_{0,0}$ and  $q'_{1} = \bp_1\sk{2} q_{1}\sk{2} \cdots q_{1}\sk{c}$.
Then it is easy to check that $w'=(q_0',q_1')$ has the properties in (\ref{redv1}).

Assume $l(q_1\sk{1}) < n+1$.

If $l(q_{0}\sk{t'})<n+1$ for $1\le t' < t$ and  $l(q_{0}\sk{t})=n+1$, then $l(\bp_1\sk{t+1})=0$ and $\bp_1\sk{t+1}$ is a trivial path.
set  $q'_{0} =\bp_0\sk{1} \cdots \bp_{0}\sk{t}q_{0}\sk{t+1}\cdots q_{0}\sk{c'}q_{0,0}$ and  $q'_{1} =  q_{1}\sk{2} \cdots q_{1}\sk{c}$.
Then it is easy to check that $w'=(q_0',q_1')$ has the properties in (\ref{redv1}).

If $l(q_{0}\sk{t'})<n+1$ for $1\le t' \le c'$, if $\tau \vt(q_0\sk{c'}) = \vs(q_0)= \vs(\bp_1\sk{c'})$, set $q'_{0} = \bp_{0}\sk{1}\cdots \bp_{0}\sk{c'}$ and  $q'_{1} =  q_{1}\sk{2} \cdots q_{1}\sk{c}$.
Then it is easy to check that $w'=(q_0',q_1')$ has the properties in (\ref{redv1}).

Otherwise $\bp_1\sk{c'+1}$ contains no arrow of type $\xb$ since each bound path in $\zzs{n-1}Q$ contains  only one arrow of type $\xb$.
Set $p_0 = q_{0,0}, p_1 =\bp_1\sk{c'+1}$, $q'_{0} = \bp_{0}\sk{1}\cdots \bp_{0}\sk{c'}$ and $q'_{1} = q_{1}\sk{2} \cdots q_{1}\sk{c}$.

\tiny\eqqcn{}{\xymatrix@C=0.6cm@R1cm{
&&&\ar@{~>}[rd]|-{q_{0,0}} &&
\ar@[red]@{~>}[ld]|-{\bp_{1}\sk{c'+1}} \ar@[red]@{~>}[rd]|-{\bp_{0}\sk{c'}}&&
\ar@{..}[ld]|-{}&&
&&
\ar@{~>}[ld]|-{ \mbox{  }q_{1}\sk{c}}&&&\\
&&&&\ar@{~>}[rd]|-{q_{0}\sk{c}}&& \ar@[green]@{~>}[ld]|-{\bp_{1}\sk{c}} \ar@{..}[rd]&&
\ar@{..}[ld]|-{} &&\ar@{~>}[ld]|-{ q_{1}\sk{c-1}}&&\\
&&&&&\ar@{..}[rd]&& \ar@[green]@{~>}[ld]|-{\bp_{1}\sk{2}} \ar@[red]@{~>}[rd]|-{\bp_{0}\sk{1}}&&\ar@{..}[ld]&\\
&&&&&&\ar@{~>}[rd]|-{q_{0}\sk{1}}&&\ar@{~>}[ld]|-{ q_{1}\sk{1}}&&\\
&&&&&&&&&&
}}\normalsize

Then it is easy to check that $w'=(p_0, p_1, q_0',q_1')$ has the properties in (\ref{redv1}).

The second assertion is proved similarly.
\end{proof}

Now we prove that acyclic stable $n$-translation quiver with a nicely-graded complete $\tau$-slice is nicely-graded.

\begin{thm}\label{cychome}
If $Q$ is nicely-graded, then $\zzs{n-1}Q$ is also nicely-graded.
\end{thm}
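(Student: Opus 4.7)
The plan is to show that every cyclic walk in $\zzs{n-1}Q$ integrates to zero, by reducing it to a cyclic walk that lies inside a single layer $\zzs{n-1}Q[t]\cong Q$, where nicely-gradedness is already assumed. The reduction proceeds by iteratively simplifying V-shaped and $\Lambda$-shaped subwalks using Lemma \ref{redV}, which is tailor-made to preserve $\vu$ while shrinking complexity across arrows of type $\xb$.

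Fix a cyclic walk $w=(p_0,\ldots,p_{2r+1})$ based at some vertex $(\bbi,t)\in(\zzs{n-1}Q)_0$, and let $c(w)$ be the total number of arrows of type $\xb$ appearing in $w$. I would induct on $c(w)$. If $c(w)=0$, then every $p_h$ is a concatenation of arrows of type $\xa$ only, so $w$ stays in the fixed layer $\zzs{n-1}Q[t]$; since $\zzs{n-1}Q[t]\cong Q$ as a bound quiver and $Q$ is nicely-graded, Proposition \ref{cyclezero} gives $\vu_{(\bbi,t),w}((\bbi,t))=\vu_{\bbi,w}(\bbi)=0$. For $c(w)>0$, pick a V-piece $(p_{2h},p_{2h+1})$ or $\Lambda$-piece $(p_{2h+1},p_{2h+2})$ containing at least one arrow of type $\xb$. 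After (if necessary) swapping the two legs of the V or $\Lambda$, arrange that the first arrow of the relevant leg is of type $\xa$, so the hypothesis of Lemma \ref{redV} applies. The lemma replaces the chosen piece by a new V- or $\Lambda$-piece of strictly smaller $d+d'$ (respectively $d+d''$), possibly prepended or appended by a short walk $(p_0,p_1)$ whose arrows are all of type $\xa$, while preserving $\vu$. Splicing this back into $w$ yields a new cyclic walk $w'$ based at $(\bbi,t)$ with $\vu_{(\bbi,t),w'}((\bbi,t))=\vu_{(\bbi,t),w}((\bbi,t))$ and, by a suitable accounting, strictly smaller complexity.

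The main obstacle is the case when both legs of the chosen V or $\Lambda$ begin (respectively, end) with $\xb$-arrows, which does not fit the hypothesis of Lemma \ref{redV}. In that situation I would first rewrite $w$ by shifting the breakpoint between consecutive pieces $p_h$, absorbing one of the offending $\xb$-arrows into the adjacent piece; this changes neither $c(w)$ nor $\vu_{(\bbi,t),w}((\bbi,t))$, but produces a V- or $\Lambda$-piece to which Lemma \ref{redV} does apply. Care is needed that the extra walk $(p_0,p_1)$ inserted by the lemma, which introduces new V- and $\Lambda$-junctions, does not spoil termination; the natural fix is to well-order walks by the lexicographic pair consisting of the number of $\xb$-arrows in $w$ and the sum $\sum_h(d(p_h)+d'(p_h)+d''(p_h))$, since the inserted $(p_0,p_1)$ is $\xb$-free and hence cannot increase the first coordinate. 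Once the induction terminates at $c(w)=0$ we get $\vu_{(\bbi,t),w}((\bbi,t))=0$ for every cyclic walk based at $(\bbi,t)$, and Proposition \ref{cyclezero}(\ref{grdcyc}) concludes that $\zzs{n-1}Q$ is nicely-graded.
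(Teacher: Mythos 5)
Your overall strategy coincides with the paper's: both proofs reduce an arbitrary cyclic walk in $\zzs{n-1}Q$ to one containing no arrow of type $\xb$ by repeated application of Lemma \ref{redV}, preserving $\vu$ at each step, and then conclude because a $\xb$-free cyclic walk lies in a single copy of $Q$. The genuine gap is in your termination argument. The first coordinate of your lexicographic measure, the total number of $\xb$-arrows in the walk, is not monotone under Lemma \ref{redV}: the new legs $q_0',q_1'$ are assembled from the complementary bound paths $\bar p_0^{(t)},\bar p_1^{(t)}$ supplied by Lemma \ref{two_paths}, and each product $\bar p_1^{(t)}\bar p_0^{(t)}$ is a bound path of length $n+1$, hence contains exactly one arrow of type $\xb$, which may land in $\bar p_0^{(t)}$; so a single application can replace one $\xb$-arrow by as many as $c'=d'(q_0)$ of them. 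The second coordinate $\sum_h\bigl(d(p_h)+d'(p_h)+d''(p_h)\bigr)$ is equally uncontrolled, since the inserted pieces $p_1',\ldots,p_{2m}'$ contribute their own positive $d$-values and the lemma only bounds $d(q_1')+d'(q_0')$ for the single junction being processed. This is precisely why the paper runs a two-level induction instead: an inner reverse induction on $d'(q_0^{(l)})+d(q_1^{(l)})$ at the current junction, nested inside an outer induction on the number of pieces of the original walk not yet consumed (on $r-d^{(l)}$), with the invariant that the already-processed prefix consists entirely of $\xb$-free pieces.

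A second problem is your treatment of a junction both of whose legs begin (respectively end) with arrows of type $\xb$. ``Shifting the breakpoint'' does not work: at a $\Lambda$-junction the two legs leave the common vertex along two \emph{different} arrows, so neither arrow can be absorbed into the neighbouring piece. The paper first normalizes $Q$ to be homogeneous via Proposition \ref{depth}; then the only vertices all of whose outgoing arrows are of type $\xb$ are those with $\vu\equiv n\pmod{n+1}$, and the one unavoidable such junction, at the base vertex $\vs(p_0)=\vs(p_{2r+1})$, is handled by applying Lemma \ref{two_paths} directly to the two $\xb$-arrows themselves, whose complements are automatically $\xb$-free because a bound path contains at most one arrow of type $\xb$. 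Note also that this move shifts the base vertex to $\tau^{-1}\bbi$; that is harmless, since $\vu_{i,w}(i)=0$ at one vertex of a cyclic walk forces it at every vertex of the walk, but your write-up should say so rather than assert that the reduced walk is still based at $(\bbi,t)$.
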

\begin{proof}
By Proposition \ref{depth}, we may assume that $Q$ is homogeneous, that is, it is nicely-graded with depth $n$.
Let $\bbi_0$ be a source in $Q$ such that $0\le \vu_{\bbi_0}(\bbj) \le n$ for any $\bbj$ in $Q$.

Let $\bar{w} = (p_0, \ldots, p_{2r+1})$ be a cyclic walk from $\bbi$ in $\zzs{n-1} Q$, we prove that $\vu_{\bbi,\bar{w}}(\bbi) =0$.
By taking $\tau$-mutations, we may assume that $\vs(p_0)$ is in $Q$.

If the cyclic walk $\bar{w}$ contains no arrow of type $\xb$, then $\bar{w}$ is contained in a complete $\tau$-slice isomorphic to $Q$, which is nicely-graded, so we have $\vu_{\bbi,\bar{w}}(\bbi) =0$.

If $r=0$ and $\vs(p_0)=\tau(\vt(p_0))$ and both paths are bound paths, then both paths are of length $n+1$.
So $\vu_{\bbi,\bar{w}}(\bbi) =0$ for any vertex $\bbi$ on $\bar{w}$.

Otherwise, we show that there is a cyclic walk  $\bar{w}'$ without arrow of type $\xb$ from $\bbi'$ in $\zzs{n-1} Q$ such that  $\vu_{\bbi,\bar{w}}(\bbi) =\vu_{\bbi',\bar{w'}} (\bbi'). $

\medskip

Set $\bar{w}\sk{0}= \bar{w}$.

If $p_0 = p'_0 \bxb$, then  $p_{2r+1} = p'_{2r+1}\bxb'$, with $\bxb'$  an arrow of type $\xb$ since $Q$ is homogeneous and all arrows starting from $\vs(p_0)=\vs(p_{2r+1})$ are of type $\xb$.
Apply (2) of Lemma \ref{two_paths} for $p_{2r+1}, p_0$, with $\bxb'$ and $\bxb$ as the path $q_1$ and $q_2$ in Lemma \ref{two_paths}, then  there are paths $p''_{2r+1}$ and  $p''_{0}$ such that $\vt(p''_{2r+1})=\vt(p''_{0}) =\tau^{-1}\vs(p_0)$ and $$l(p_0)-l(p_{2r+1} )=l(p'_0)-l(p''_0)+l(p''_{2r+1})-l(p'_{2r+1}).$$
Since $p''_{2r+1}\bxb'$ and $p''_0\bxb$ are bound paths,   $p''_{2r+1}$ and $p''_0$ contains no arrow of type $\xb$.
Write $p\sk{1}_0 = p''_{2r+1}, p\sk{1}_1 = p''_0,q\sk{1}_0 = p'_0, q\sk{1}_1 = p_1$ and $ p\sk{1}_{2r+1} = p'_{2r+1}$ if $r>0$, and $q\sk{1}_1  = p'_{2r+1}$ if $r=0$, then $$\bw\sk{1} =(p\sk{1}_0,p\sk{1}_1,q\sk{1}_0, q\sk{1}_1, p_2, \cdots, p_{2r}, p\sk{1}_{2r+1})$$ is a cyclic walk from $\bbi\sk{1}=\tau^{-1}\bbi$, and $$\vu_{\bbi\sk{1},\bar{w}\sk{1}}(\bbi\sk{1})=\vu_{\bbi,\bar{w}}(\bbi).$$

\medskip

Note that if $r=0$ and  $\tau^{-1}\vs(p_0) \neq \vt(p_0)$, we have that $ p\sk{1}_{2r+1}=q\sk{1}_1 $.

\medskip

If $p_0$ does not start at an arrow of type $\xb$, let $q\sk{1}_0 =p_0, q\sk{1}_1=p_1$ and $p\sk{1}_{2r+1} =p_{2r+1}$, and   $$\bw\sk{1} =(q\sk{1}_0,q\sk{1}_1,p_2, \cdots, p_{2r}, p\sk{1}_{2r+1})$$ is a cyclic walk from $\bbi\sk{1}=\bbi$ and $$\vu_{\bbi\sk{1},\bar{w}\sk{1}}(\bbi\sk{1})=\vu_{\bbi,\bar{w}}(\bbi).$$

Note that we may assume that $\bbi\sk{1}=\vs(p_0\sk{1}) $ is in $Q$.

\medskip

Assume that we have a cyclic walk $$\bw\sk{l}=(p\sk{l}_0, \cdots, p\sk{l}_{2h\sk{l}+1}, q\sk{l}_{0},q\sk{l}_{1},p_{d\sk{l}}, \cdots, p_{2r},p\sk{1}_{2r+1})$$ from $\bbi\sk{l}$ in $\zzs{n-1} Q$, such that $p\sk{l}_t$ contains no arrow of type $\xb$ for $0 \le t \le 2h\sk{l}+1$ and $$\vu_{\bbi\sk{l},\bar{w}\sk{l}} (\bbi\sk{l}) =\vu_{\bbi,\bar{w}}(\bbi). $$

\medskip

If $q\sk{l}_{0}$ contains an arrow of type $\xb$, consider the walk $w=(q_0\sk{l}, q_1\sk{l})$.
Using induction reversely on $d'(q\sk{l}_{0})+d(q\sk{l}_{1})$, by applying (\ref{redv1}) of Lemma \ref{redV} starting with $w=(q_0\sk{l}, q_1\sk{l})$, we eventually get

(i). a walk $w' =(p_1',\cdots,p_{2m}',q_0',q_1')$  from the vertex $\vs(q_0\sk{l})$ to the vertex $\vs(q_1\sk{l})$ and $\vu_{\vs(q_0\sk{l}),w}(\vs(q_1\sk{l}))= \vu_{\vs(q_0\sk{l}),w'}(\vs(q_1\sk{l}))$, or

(ii). a walk $w'' =(p_1',\cdots,p_{2m}',q_0'')$ from the vertex $\vs(q_0\sk{l})$ to the vertex $\vs(q_1\sk{l})$ and $\vu_{\vs(q_0\sk{l}),w}(\vs(q_1\sk{l}))= \vu_{\vs(q_0\sk{l}),w''}(\vs(q_1\sk{l}))$.

Such that the paths $p_1',\cdots,p_{2m}'$,  and $q_0'$ contain no arrow of type $\xb$.
Set $h\sk{l+1}=h\sk{l}+m$, let $p_t\sk{l+1} =p_t\sk{l}$ for $0\le t\le 2h\sk{l}$, and  $p_{2h\sk{l}+t}\sk{l+1} =p'_t$ for $1\le t\le 2m$.

In case (i), let $q_0\sk{l+1} =q_0', q_1\sk{l+1}=q_1'$ and set $d\sk{l+1}=d\sk{l}$.

In case (ii),  let $q_0\sk{l+1} =p_{d\sk{l}}q_0'', q_1\sk{l+1} =p_{d\sk{l}+1}$ and set $d\sk{l+1}=d\sk{l}+2$.

Set $\bbi\sk{l+1}=\bbi\sk{l}$, now $$\bw\sk{l+1}=(p\sk{l+1}_0, \cdots, p\sk{l}_{2h\sk{l+1}-1}, q\sk{l+1}_{0},q\sk{l+1}_{1},p_{d\sk{l+1}}, \cdots, p_{2r},p\sk{1}_{2r+1})$$ is a cyclic walk from $\bbi\sk{l+1}$ in $\zzs{n-1} Q$.
One easily check that   $$\vu_{\bbi\sk{l+1},\bw\sk{l+1}}(\bbi\sk{l+1}) =\vu_{\bbi\sk{l},\bw\sk{l}} (\bbi\sk{l}). $$

\medskip

If $q\sk{l}_{0}$  contains no arrow of type $\xb$, then the arrow ending at $\vt(q\sk{l}_{0})$ is not of type $\xb$, and the ending arrow of $q_1\sk{l}$ is not of type $\xb$.
Let $q_2\sk{l}= p_{d\sk{l}})$ and consider the walk $w=(q_1\sk{l}, q_2\sk{l}$.
Similar to the above, using induction reversely on $d(q\sk{l}_{1})+d''(q\sk{l}_{2})$, by applying (\ref{redv2}) of Lemma \ref{redV} starting with $w=(q_1\sk{l}, q_2\sk{l})$, we eventually get

(a). a walk $w' =(p_1',\cdots,p_{2m}',q_1',q_2')$  from $\vt(q_1\sk{l})$ to $\vt(q_2\sk{l})$ and $\vu_{\vt(q_1\sk{l}),w'}(\vt(q_2\sk{l}))= \vu_{\vt(q_0\sk{l}),w}(\vt(q_2\sk{l}))$, or

(b). a walk $w'' =(p_1',\cdots,p_{2m}',q_1'')$ from the vertex $\vt(q_1\sk{l})$ to the vertex $\vt(q_2\sk{l})$ and $\vu_{\vt(q_1\sk{l}),w}(\vt(q_2\sk{l}))= \vu_{\vt(q_1\sk{l}),w''}(\vt(q_2\sk{l}))$.

Such that the paths $p_1',\cdots,p_{2m}'$,  and $q_1'$ contain no arrow of type $\xb$.
Set $h\sk{l+1}=h\sk{l}+m+1$, let $p_t\sk{l+1} =p_t\sk{l}$ for $0\le t\le 2h\sk{l}+11$, $p_{2h\sk{l}+2}\sk{l+1}=q_0\sk{l}$ and  $p_{2h\sk{l}+2+t}\sk{l+1} =p'_t$ for $1\le t\le 2m$, $p_{2h\sk{l+1}+1}\sk{l+1} =q'_1$.
Set $d\sk{l+1}=d\sk{l}+1$

In case (a), let $q_0\sk{l+1} =q_2', q_1\sk{l+1}=p_{d\sk{l}}$.

In case (b),  let $q_1\sk{l+1} =q_1' p_{d\sk{l}}, q_2\sk{l+1} =p_{d\sk{l}+1} (= p_{d\sk{l+1}})$.

Set $\bbi\sk{l+1}=\bbi\sk{l}$, now $$\bw\sk{l+1}=(p\sk{l+1}_0, \cdots, p\sk{l}_{2h\sk{l+1}-1}, q\sk{l+1}_{0},q\sk{l+1}_{1},p_{d\sk{l+1}}, \cdots, p_{2r},p\sk{1}_{2r+1})$$ is a cyclic walk from $\bbi\sk{l+1}$ in $\zzs{n-1} Q$.
One can check that   $$\vu_{\bbi\sk{l+1},\bw\sk{l+1}}(\bbi\sk{l+1}) =\vu_{\bbi\sk{l},\bw\sk{l}} (\bbi\sk{l}). $$

\medskip

If $d\sk{l+1} >2r+1$, then $$\bw\sk{l+1}=(p\sk{l+1}_0, \cdots, p\sk{l+1}_{2h\sk{l+1}-1}, p\sk{l+1}_{2h\sk{l+1}},p\sk{1}_{2r+1}),$$ this is case (i), $p\sk{1}_{2r+1}=q_1' ={p'}_{2r+1}\sk{l+1}$ and $p_0\sk{l+1}$ are paths start at $\bbi\sk{l+1}$.

Note that the function $\vu$ of $\zzs{n-1}Q$ coincides on the complete $\tau$-slice $Q$ with the one defined on $Q$.
If $\bbi_0$ is a source of $Q$, then  $0 \le \vu_{\bbi_0}(\bbi\sk{l+1}) < n+1 $ since $Q$ is homogeneous and $\bbi\sk{l+1}=\vs(p_0\sk{l+1})$ is in $Q$.
Since the path $p_{0} $ contains no arrow of type $\xb$, we have $0 \le \vu_{\bbi_0}(\bbi')< n+1 $ for any vertex $\bbi'$ in $p_0\sk{l+1}$ and the vertex $\bbi' = \vt(p_0\sk{l+1})=\vt(p_1\sk{l+1})$.
Similarly, we have that $0 \le \vu_{\bbi_0}(\vs(p_t\sk{l+1})) <n+1 $ and $0 \le \vu_{\bbi_0}(\vt(p_t\sk{l+1}))< n+1 $ for $t=1,\ldots,2h\sk{l+1}$.
Especially $0 \le \vu_{\bbi_0}(\vt(p\sk{1}_{2r+1})) = \vu_{\bbi_0}(\vt(p\sk{1}_{2r}))< n+1 $, so $\vt(p\sk{1}_{2r+1})\in Q_0$ and  $p\sk{1}_{2r+1}$ is a path in $Q$ since $Q$ is convex.

This proves that $\bw\sk{l+1}$ is a walk in $Q$ and we have that $$\vu_{\bbi\sk{l+1}),\bw\sk{l+1}}(\bbi\sk{l+1}) =0.$$

Now the theorem follows from induction on $r-d\sk{l}$.

\end{proof}

As a corollary, we have the following proposition.

\begin{thm}\label{cmpsmashp}
If $Q$ is an $n$-nicely-graded quiver, then $\zZ\mid _{n-1} Q$ is isomorphic to a connect component of  $\zZ_{\vv} \tQ$.
\end{thm}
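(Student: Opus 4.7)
Since $Q$ is nicely-graded, by Proposition \ref{nicegrd} we fix a grading function $\vu : Q_0 \to \zZ$ with $\vu(j) = \vu(i)+1$ for every arrow $\alpha:i\to j$ in $Q_1$. By Theorem \ref{cychome} the quiver $\zzs{n-1}Q$ is nicely-graded, and by Proposition \ref{0nap:niceg} so is $\zzv\tQ$. The plan is to write down an explicit embedding $\phi:\zzs{n-1}Q \hookrightarrow \zzv\tQ$ and show its image is a connected component.

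Define $\phi$ on vertices by $\phi(i,t) = (i,\vu(i)+t(n+1))$. First I would check injectivity: if $\phi(i,t)=\phi(i',t')$ then $i=i'$ and $\vu(i)+t(n+1)=\vu(i)+t'(n+1)$, forcing $t=t'$. Next I would extend $\phi$ to arrows. For an arrow of type $\alpha$ in $\zzs{n-1}Q$, namely $(\alpha,t):(i,t)\to(j,t)$ with $\alpha:i\to j$ in $Q_1$, note that $\vu(j) = \vu(i)+1$, so $\phi$ sends this to the arrow $(\alpha,\vu(i)+t(n+1))$ in $\zzv\tQ$ from $\phi(i,t)$ to $\phi(j,t)$. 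For an arrow of type $\beta$, namely $(\beta_p,t):(j,t)\to(i,t+1)$ coming from $p\in\caM$ with $\vs(p)=i,\vt(p)=j$, the fact that $p$ is a bound path of length $n$ in the nicely-graded $Q$ gives $\vu(j) = \vu(i)+n$, so
\[
\phi(j,t) = (j,\vu(i)+n+t(n+1)), \qquad \phi(i,t+1) = (i,\vu(i)+(t+1)(n+1)),
\]
and $\phi$ sends this arrow to $(\beta_p,\vu(i)+n+t(n+1))$ in $\zzv\tQ$.

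Second, I would check that $\phi$ is a closed embedding of quivers, in the sense that the image is closed under taking neighbors. A vertex in the image has the form $(i,\vu(i)+t(n+1))$. An outgoing arrow in $\zzv\tQ$ comes from an arrow of $\tQ$: if the arrow is of type $\alpha$, its target has the form $\phi(j,t)$; if it is a returning arrow $\beta_p$ with $\vt(p)=i$, then using $\vu(\vs(p)) = \vu(i)-n$ the target equals $\phi(\vs(p),t+1)$. A symmetric argument treats incoming arrows. Thus the image is a union of connected components of $\zzv\tQ$, and since $\zzs{n-1}Q$ is connected (as noted in the excerpt), it is a single connected component.

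The main work, and the step I expect to be most delicate, is the compatibility of $\phi$ with the relation sets: translating $\zZ\rho$, $\zZ\rho_{\caM}$ and $\zZ\rho_0$ (the three pieces making up $\rho_{\zzs{n-1}Q}$) into instances of $\rho_{\vv}$. This is a matter of unwinding definitions: a relation $\zeta\in\rho$ yields a family $(\zeta,t)\in\zZ\rho$ whose $\phi$-image is $(\zeta,\vu(\vs(\zeta))+t(n+1))\in\rho_{\vv}$; a relation of $\rho_{\caM}$ of the form given in Lemma \ref{relzero} yields a family in $\zZ\rho_{\caM}$ whose $\phi$-image becomes a relation of $\zzv\tQ$ coming from the same relation of $\rho_{\caM}\subset\rho$ of $\tQ$ (here the shifts $t$ and $t+1$ in the two summands are reconciled precisely because $\vu(\vt(p))-\vu(\vs(p)) = n$); finally the relations in $\zZ\rho_0$ correspond to the trivial relations $\beta_{p'}\beta_p=0$ that already belong to $\rho$ of $\tQ$. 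Conversely, every relation of $\rho_{\vv}$ whose support lies in the image comes from one of these. Once this verification is completed, $\phi$ is an isomorphism of bound quivers onto its image, which is a connected component of $\zzv\tQ$, as claimed.
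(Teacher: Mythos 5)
Your proposal is correct and follows essentially the same route as the paper: the paper's proof defines exactly the map $\Phi_{(i_0,0)}(v',m)=(v',(n+1)m+\vu_{(i_0,0)}(v'))$ (your $\phi(i,t)=(i,\vu(i)+t(n+1))$) and asserts that it induces an isomorphism onto a connected component preserving the relations. Your write-up simply supplies the details (injectivity, the $\vu(\vt(p))-\vu(\vs(p))=n$ computation for the $\beta$-arrows, closure of the image under neighbours, and the matching of $\zZ\rho$, $\zZ\rho_{\caM}$, $\zZ\rho_0$ with $\rho_{\vv}$) that the paper leaves as ``easy to see.''
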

\begin{proof}

Use the reindexing for $Q$.
Since $\zzs{n-1}Q$ is nicely-graded, the function $\vu_{(v,t), \bw } (v', t')$﹛is independent of the walk $\bw$,  so we extend  $\vu_{v} (v')$ of $Q$ uniquely to $\zzs{n-1}Q$ as $\vu_{(v,0)}(v',t')$.
﹛
Clearly, choose $v_0=(i_0,0)$ in $Q$ and write $\vu(v,t) = \vu(i,u,t) = \vu_{(i_0,0,0)} (i,u,t) $ on $\zzs{n-1}Q$ for $v=(i,u)$ in $Q$.

It is easy to see that the map $\Phi_{(i_0,0)}: \zZ\mid _{n-1} Q_0 \to \zZ_{\vv} \tQ_0$ defined by $$\Phi_{(i_0,0)}(v',m)= (v',(n+1)m+\vu_{(i_0,0)}(i,u))$$ for $v'=(i,u)$ induces an isomorphism from the quiver $\zZ\mid _{n-1} Q$ to a connected components of $\zZ_{\vv} \tQ$ preserving the relations.
\end{proof}

By choosing $i_0$ differently in a path of length $n+1$ in $\tQ$ in the above theorem, we see $\zZ\mid _{n-1}Q$ is isomorphic to each of the connected components of $\zZ_{\vv} \tQ$.
$\zzv \tQ$ has $n+1$ connected components isomorphic to $\zzs{n-1}Q$.
Thus $\zzv \tQ$ is also an $(n+1)$-properly-graded quiver.

\medskip

In the rest of the paper, we use the same notation for a vertex in $\zzs{n-1}Q$ as its image in $\zzv\tQ$ under the map $\Phi_{v_0}$ for some $v_0= (i_0,0)$ in $Q$.
So a vertex $(v,t)$ in $\zzs{n-1}Q$ will be written as $(v, \vu_{(v_0,0)} (v,t) )$, or $(i,\vu_{i_0}(i), \vu_{(v_0,0)}(v,t))$ for $v=(i,\vu_{i_0}(i))$ in $Q$.

Since  $\zZ_{\vv}\tQ$ is always acyclic, Theorem \ref{cmpsmashp} does not hold when $Q$ contains an oriented cycle.
The following example shows that it does not hold when $Q$ is not  nicely-graded.

\begin{exa}\label{nocycleho}
{\em
In the bound quiver \eqqcnn{}{Q: \xymatrix@C=0.2cm@R0.5cm{
1\ar[dd]_{\xa}\ar[rrd]^{\xb} \\
&&2\ar[lld]^{\gamma}\\
3}} with relation the path $\gamma\xb$ of length $2$.
This is an acyclic quiver of type $\tilde{A}_2$ of $3$ vertices, and it is impossible to take an nicely-graded orientation.
All the maximal bound paths in this quiver are the arrows, and hence are paths  of length $1$.
Its returning arrow quiver is  \eqqcn{}{\tQ: \xymatrix@C=0.2cm@R0.5cm{
1\ar@/^/[dd]_{\xa}\ar@/^/[rrrd]^{\xb} \\
&&&2\ar@/^/[llld]^{\gamma}\ar@/^/[lllu]_{\xb^*}&\\
3\ar@/^/[uu]^{\xa^*}\ar@/^/[rrru]_{\gamma^*}} }
with relation $\{\xa\xa^* - \gamma\gamma^*, \xa^*\xa - \xb^*\xb, \gamma^*\gamma- \xb\xb^*, \gamma^*\xa,\xb\xa^*, \gamma\xb, \xa\xb^*, \xb^*\gamma^*,\xa^*\gamma   \}$.

We have
 \eqqcn{}{\zzs{0}Q =\zZ Q: \xymatrix@C=0.1cm@R0.5cm{
&&\ar@{--}[ll]& (1,-1)\ar[dd]_{\xa}\ar[rrd]^{\xb}&&&  (1,0)\ar[dd]_{\xa}\ar[rrd]^{\xb} &&& (1,1)\ar[dd]_{\xa}\ar[rrd]^{\xb}&&& \ar@{--}[rr]&&&\\
&&\ar@{--}[ll]& &&(2,-1)\ar[lld]^{\gamma} \ar[ur]& &&(2,0)\ar[lld]^{\gamma} \ar[ur]& &&(2,1)\ar[lld]^{\gamma}& \ar@{--}[rr]&&&\\
&&\ar@{--}[ll]& (3,-1)\ar@/^/[uurrr]\ar[urrrrr] &&& (3,0)\ar@/^/[uurrr]\ar[urrrrr]&&& (3,1)&&& \ar@{--}[rr]&&&
}}
Its complete $\tau$-slices are isomorphic to $Q$ as bound quivers.
On the other hand, we have
\eqqcn{}{\zzv\tQ: \xymatrix@C=0.1cm@R0.5cm{
&&\ar@{--}[ll]& &(1,-1)\ar[drr]\ar[ddrr]& &(1,0)\ar[drr]\ar[ddrr] & &(1,1) \ar[drr] \ar[ddrr] &&(1,2) &\ar@{--}[rr]&&\\
&&\ar@{--}[ll]& &(2,-1)\ar[drr]\ar[urr]& &(2,0)\ar[drr]\ar[urr] & &(2,1) \ar[drr] \ar[urr] &&(2,2) &\ar@{--}[rr]&&\\
&&\ar@{--}[ll]& &(3,-1)\ar[urr]\ar[uurr]& &(3,0)\ar[urr]\ar[uurr] & &(3,1) \ar[urr] \ar[uurr] &&(3,2) &\ar@{--}[rr]&&\\
}}

So $\zzs{n-1}Q$ is not a connected component of $\zzv\tQ$.
We have a homogeneous $\tau$-slice of $\zzv\tQ$
\eqqcn{}{\zzv\tQ[0,1]: \xymatrix@C=0.1cm@R0.5cm{
(1,0)\ar[drr]\ar[ddrr] & &(1,1) \\
(2,0)\ar[drr]\ar[urr] & &(2,1)  \\
(3,0)\ar[urr]\ar[uurr] & &(3,1) \\
}}
It is a $1$-nicely-graded quiver of type $\tilde{A}_5$.
}\end{exa}

\section{Complete $\tau$-slices in $\zzs{n-1}Q$ and in $\zzv\ttQ$\label{mqtslice}}

From now on we assume that $Q$ is an $n$-nicely-graded quiver.
By Proposition \ref{slice:sub}, $Q$ is a complete $\tau$-slice of $\zzs{n-1}Q$ and by Theorem \ref{cmpsmashp}, $\zzs{n-1}Q$ is a connected component of $\zZ_{\vv}\tQ$.
Fix a source $v_0=(i_0,0)$ of $Q$ such that $\vu_{v_0}(v)\ge 0$ for any $v$ in $Q$,  a vertex in $Q$ is denoted as $(i,t)$ for $0\le t \le n$, and $t = \vu_{i_0}(i)$.
Then the returning arrow quiver $\tQ$ is obtained from $Q$ by adding an arrow $\beta_p$ from $(\vt(p),\vu_{v(0)}(\vs(p))+n)$ to $(\vs(p),\vu_{v(0)}(\vs(p)))$ for each path $p$ in $\caM$.
We may assume that the second index $t$ is taken from $\zZ/(n+1)\zZ$, so $(i,t) $ and $(i,t+n+1)$ represent the same vertex in $\tQ$.

Let $\ttQ$ be the returning arrow quiver of $\tQ$ defined in \cite{gyz14}, then $\ttQ$ is an $(n+1)$-translation quiver.
$\ttQ$ is obtained from $\tQ$ by adding a loop $\gamma_{i,t}$ to each vertex $(i,t)$ of $Q$.
Since $\tQ$ is a stable $n$-translation quiver, $\ttQ$ is a stable $(n+1)$-translation quiver by \cite{g16}, we can construct their second type $\zZ Q$ construction $\zzv \tQ$ and $\zzv\ttQ$.
$\zzv\tQ$ has $n+1$ connected components isomorphic to $\zzs{n-1} Q$.
$\zzv\tQ$ and $\zzv\ttQ$ have the same vertex set $\{(i,t,s)\mid (i,t)\in Q_0, s\in \zZ\},$ and we have $$t \equiv \vu_{(i_0,0)}(i,t) \bmod (n+1) \mbox{ and } s= \vu_{(i_0,0,r)}(i,t,s)+r,$$ for the vertex $(i,t,s)$ in the connected component of $\zzv\tQ$ containing $(i_0,0,r)$ for some integer $0\le r \le n$.

It follows from the definition that $\zzv\ttQ$ is obtained from $\zzv\tQ$ by adding an arrow $\gamma_{(i,t,s)}$ from $(i,t,s)$ to $(i,t,s+1)$ for each vertex $(i,t,s)$ in $\zzv\ttQ$, which is called {\em an arrow of type $\xc$ from $(i,t,s)$}, connecting the two neighbor connected components of $\zzv\tQ$.

We identify $\zzs{n-1}Q$ with the connected component of $\zzv\tQ$ containing the vertex $v_0=(i_0,0)$.
For $v=(i,s)\in Q_0$, let  $\qv{v}{0,r}$ be the full sub-quiver of $\zZ\mid _{n-1}Q$ with the vertex set $\{(v',t)\in \zZ\mid _{n-1} Q \mid 0\le \vu_{(v,0)}(v',t)\le r\}$.
Then  $\qv{v}{0,n}$ is a complete $\tau$-slice of $\zzs{n-1} Q$ and we call it a {\em homogeneous  $\tau$-slice} with a source $v$.
Write  $V(t) = \{(i,t,t) \in \zzs{n-1}Q_0\} $, then $t=\vu_{i_0}(i)$ and $(i,t,t)\in Q_0\times \zZ/(n+1)\zZ \times \zZ$.
Write $$V'(t,r) = \{(i,t,t+r) \in \zzv\tQ_0\mid  (i, t,t) \in V(t)\},$$ for $t=0, \ldots, n,$ and $r=0, \ldots, n+1$, then $V'(t,0)=V(t)$.
The set $\cup_{t\in \zZ} V'(t,r)$ is the vertex set of a connected component of $\zzv\tQ$ for $r=0, 1,\ldots, n$.
$V'(t,r)$ is in the connected component of $\zzv\tQ$ containing the vertex $v_0=(i_0,0,r)$.
The following lemma describes the arrows in $\zzv\ttQ$.
\begin{lem}\label{zzvttq:arrows}
\begin{enumerate}
\item There is no arrow from $V(t)$ to $V(t')$ in $\zzs{n-1}Q$ unless $t'= t+1$.

\item There is no arrow from $V'(t,r)$ to $V'(t',r')$ in $\zzv\ttQ$ unless $(t',r') = (t+1,r)$ or $(t,r+1)$.
\end{enumerate}
\begin{itemize}
\item[] The arrows from $V'(t,r)$ to $V'(t,r+1)$ are of type $\xc$.
\item[] The arrows from $V'(t,r)$ to $V'(t+1,r)$ are of type $\xa$ or $\xb$, and when $Q$ is a homogeneous $\tau$-slice in $\zzs{n-1}Q$,  they are of type $\xb$ if $t\equiv n\bmod(n+1)$.
\end{itemize}
\end{lem}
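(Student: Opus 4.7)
The plan is to read the arrows directly from the constructions of $\zzv\tQ$ and $\zzv\ttQ$ and track how each kind of arrow moves the second and third coordinates of a vertex, since the strata $V'(t,r)$ are precisely the level sets of the pair of invariants ``second coordinate $t$'' and ``$r = $ third coordinate $- t \pmod{n+1}$''.

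First I would analyze the arrows in $\tQ$. Any arrow in $\tQ$ is either (a) an arrow $\alpha:(i,t)\to(j,t+1)$ of $Q$, so the second index increases by $1$, or (b) a returning arrow $\beta_p:(\vt(p),\vu_{i_0}(\vs(p))+n)\to(\vs(p),\vu_{i_0}(\vs(p)))$. Since the second index is taken in $\zZ/(n+1)\zZ$, and $u\equiv u+n+1 \pmod{n+1}$, case (b) also raises the second index by $1\bmod(n+1)$. By the definition of $\zzv\tQ$ in Section \ref{stbq}, an arrow $(\alpha,m)$ of $\zzv\tQ$ goes from $(v,m)$ to $(v',m+1)$ for $\alpha:v\to v'$ in $\tQ$; under our identification $(i,t,s)$ this is an arrow $(i,t,s)\to(j,t+1,s+1)$ where now $t+1$ is read mod $n+1$ while $s+1$ is in $\zZ$. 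Hence every arrow of $\zzv\tQ$ (equivalently, every arrow of type $\xa$ or $\xb$ in $\zzv\ttQ$) raises $t$ by $1\bmod(n+1)$ and $s$ by $1$, so $r=s-t\bmod(n+1)$ is preserved: it sends $V'(t,r)$ to $V'(t+1,r)$. The remaining arrows of $\zzv\ttQ$ come from the loops $\gamma_{(i,t)}$ added to $\tQ$ (Section \ref{stbq}), and under the $\zZ Q$-construction each loop yields an arrow $(\gamma_{(i,t)},m):(i,t,m)\to(i,t,m+1)$, which fixes $t$ and raises $s$ by $1$, so it sends $V'(t,r)$ to $V'(t,r+1)$.

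Combining the two paragraphs proves (2) together with the identifications of the arrow types. Part (1) follows by restriction: $V(t)=V'(t,0)$, $\zzs{n-1}Q$ is identified with the $r=0$ connected component of $\zzv\tQ$ by Theorem \ref{cmpsmashp}, and $\zzv\tQ$ contains no $\xc$-arrow. For the last assertion, assume $Q$ is a homogeneous $\tau$-slice, so $\vu_{i_0}$ takes values in $\{0,1,\dots,n\}$ and every maximal bound path of $Q$ has $\vu_{i_0}$-source $0$ and $\vu_{i_0}$-target $n$. Then every arrow of $Q$ leaves a vertex whose second index is strictly less than $n$, while every returning arrow $\beta_p$ leaves a vertex whose second index is exactly $n$. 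Therefore any arrow of $\zzv\ttQ$ leaving a vertex of $V'(t,r)$ with $t\equiv n\pmod{n+1}$ must come from a $\beta$-arrow of $\tQ$.

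The main obstacle, which is purely bookkeeping, is to keep straight the two different ways the second coordinate behaves — as an integer in $\{0,\dots,n\}$ when we think of it as $\vu_{i_0}$ on $Q$, and as an element of $\zZ/(n+1)\zZ$ when it is the second coordinate in $\tQ$ — and to verify in the homogeneous case that all $\alpha$-arrows genuinely start at second index $<n$ while all $\beta$-arrows start at second index $=n$; everything else is immediate from the definitions.
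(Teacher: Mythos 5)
Your proposal is correct: the paper states this lemma without proof, treating it as immediate from the definitions of $\zzv\tQ$, $\zzv\ttQ$ and the strata $V'(t,r)$, and your bookkeeping of how each arrow type shifts the second and third coordinates (together with the observation that in a homogeneous $\tau$-slice every $\xa$-arrow starts at second index $<n$ and every $\xb$-arrow at second index $n$) is exactly the intended verification.
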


We denote the $n$-translation of $\zzs{n-1}Q$ as $\tau$ and the $(n+1)$-translation of $\zzv\ttQ$ as $\ttv$ when we need to distinguish them.

We now describe the $\ttv$-hammock in $\zzv\ttQ$.
For a vertex $(i,t)$ in $Q$, let $H^{(i,t)}$ be the $\tau$-hammock in $\zzs{n-1}Q$ starting at $(i,t,t)$.
Let $H_{\vv}^{(i,t,s)}$ be the $\tau_{\vv}$-hammock in $\zzv\ttQ$  staring at $(i,t,s)$ in $\zzv\ttQ$, then we have the following lemma.

\begin{lem}\label{tower:hammock} The vertex set of
$H_{\vv}^{(i,t,s)}$ is $\{(j,t',s')\mid  (j,t',t') \in H^{(i,t)}, s-t\le s'-t' \le s-t+1 \}$.
\end{lem}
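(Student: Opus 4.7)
The plan is to analyse bound paths in $\zzv\ttQ$ by tracking how each arrow type affects the coordinates, then reduce to the already-understood $\tau$-hammocks in $\zzs{n-1}Q$.

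First I would record the basic arithmetic coming from Lemma \ref{zzvttq:arrows}: arrows of type $\xa$ or $\xb$ preserve the quantity $s'-t'$, while arrows of type $\xc$ (the loops added in passing from $\tQ$ to $\ttQ$) raise $s'-t'$ by exactly one. Thus for any path from $(i,t,s)$ to $(j,t',s')$ in $\zzv\ttQ$, the difference $(s'-t')-(s-t)$ equals the number of $\xc$-arrows appearing in the path.

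Next I would show that any bound path in $\zzv\ttQ$ contains at most one arrow of type $\xc$. The key ingredients are the vanishing $\xc\cdot\xc=0$ at each vertex (two consecutive returning arrows for $\tL$ compose to zero because $\ttL=\Delta\tL$ is a trivial extension), together with commutativity relations $\xc\alpha=\alpha'\xc'$ for every arrow $\alpha$ of type $\xa$ or $\xb$ and appropriate arrows $\alpha',\xc'$ at the neighbouring vertex, obtained from the $\tL$-bimodule structure on $D\tL$ (the quadratic shape of such relations is recorded in Lemma \ref{relzero}). Using these commutations, any bound path containing at least two $\xc$-arrows can be rewritten as one in which two $\xc$-arrows appear consecutively, hence is zero; combined with the first step this forces $s-t\le s'-t'\le s-t+1$ whenever $(j,t',s')\in H_{\vv}^{(i,t,s)}$.

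Now for a nonzero bound path realising $(j,t',s')$ in the $\ttv$-hammock, I would use the same commutations to push the (at most one) $\xc$-arrow to the initial end of the path. What remains is a bound path lying entirely in a single connected component of $\zzv\tQ$; by Theorem \ref{cmpsmashp} this component is isomorphic to $\zzs{n-1}Q$ via a shift of the $s$-coordinate, and under this identification the remaining path becomes a nonzero bound path in $\zzs{n-1}Q$ from $(i,t,t)$ to $(j,t',t')$, so $(j,t',t')\in H^{(i,t)}$. Conversely, starting from a nonzero bound path in $\zzs{n-1}Q$ witnessing $(j,t',t')\in H^{(i,t)}$, I would lift it to the corresponding component of $\zzv\tQ$ to handle the case $s'-t'=s-t$, and prepend the $\xc$-arrow $(i,t,s)\to(i,t,s+1)$ for the case $s'-t'=s-t+1$; neither operation destroys non-vanishing, since $\xc$ is a quiver generator of $\ttL$ and the commutations only identify it with equally nonzero products.

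The main obstacle is the rigorous justification of the reduction in the second paragraph: one must confirm that the relations of $\ttL$ really do allow $\xc$-arrows to be moved past $\xa$- and especially $\xb$-arrows so that a pair of $\xc$-arrows can always be brought adjacent. This is a careful bookkeeping exercise with the bimodule $D\tL$ and the quadratic relations of the twisted trivial extension, but it is the computational core that converts the informal "$\xc$ lives in a single extra dimension" picture into a proof.
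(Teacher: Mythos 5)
Your proposal is correct and follows essentially the same route as the paper's proof: decompose a bound path in $\zzv\ttQ$ by arrow type, observe that at most one arrow of type $\xc$ (and one of type $\xb$) can occur, use the commutation relations of the trivial extension to normalize the $\xc$-arrow to the front, and identify the remaining path with a bound path in a connected component of $\zzv\tQ\simeq\zzs{n-1}Q$, giving exactly the two normalized bound paths $(p,t,s)$ and $(p\gamma_{(i,t)},t,s)$ for each $(j,t',t')\in H^{(i,t)}$. The only cosmetic difference is that the paper obtains the "at most one $\xc$-arrow" bound and the commutation bookkeeping by citing Propositions \ref{boundpath_rq} and \ref{boundpath_zq} and Lemma 2.2 of \cite{gyz14}, whereas you propose to rederive them from $\xc\cdot\xc=0$ and the bimodule relations directly.
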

\begin{proof}
Note that for a path $\hat{p}$ in  $\zzv\ttQ$ starting at $(i,t,s)$, there are paths $p_1,\cdots,p_r$ in $Q$ such that $\hat{p} $  is the images of these paths in different copies of the connected components of $\zzv\tQ$ connected by arrows of type $\beta$ and of type $\gamma$.
By Proposition ~\ref{boundpath_rq} and ~\ref{boundpath_zq}, at most one of the arrow of type $\xb$ and at most one of the arrow of type $\xc$ will appear in each bound path in $\zzv\ttQ$.
By Lemma 2.2 of \cite{gyz14} and the definition of $\zzv\ttQ$, using communicative relations concerning the arrows of type $\xc$, if such bound path contains an arrow of type $\xc$, it is linearly dependent to one with the arrow of type $\xc$  at the beginning.
So we may normalize the expressions of the bound paths such that the arrow of type $\xc$ appearing as the first arrow when it appears.

The arrow $\xb_q$ of type $\xb$ start at the vertex $(i',\vu(\vs(q))+n,t)$ for some $q$ in $\caM$, so its position in a normalized bound path is fixed.

Thus when $(p,t,t)$ is a bound  path starting at $(i,t,t)$ and ending at a vertex $(j,t+l,t+l)$ in $\zzs{n-1} Q$, we get exactly two linearly independent bound paths in normalized forms start at $(i,t,s)$, namely,  $(p,t,s)$ containing no arrow of type $\xc$ and ending at $(j,t+l,s+l)$, and  $(p\gamma_{(i,t)},t,s)$ ending at $(j,t+l,s+l+1)$.

So by the definition, $H_{\vv}^{(i,t,s)}$ is exactly the $\tau_{\vv}$-hammock in $\zzv\ttQ$  staring at $(i,t,s)$.
\end{proof}

Note that when forgetting the difference in last indices, $H^{(i,t,s)}_{\vv}$ is just two copies of $H^{(i,t)}$ connected with arrows of type $\gamma$ at every vertices.

We also have a dual version of this Lemma for $\tau_{\vv}$-hammocks ending at a vertex.

\medskip

From the proof of the above Lemma, we get immediately a formula for $\tau_{\vv}$ in $\zzv\ttQ$.
\begin{lem}\label{tower:tauvv}
$\tau^{-1}_{\vv}(i,t,s) = (i, t+n+1 , s+n+2) $.
\end{lem}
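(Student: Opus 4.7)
The plan is to read $\tau_{\vv}^{-1}(i,t,s)$ off the shape of the $\tau_{\vv}$-hammock $H_{\vv}^{(i,t,s)}$ described in the preceding Lemma \ref{tower:hammock}. In any stable $(n+1)$-translation quiver, $\tau_{\vv}^{-1}(v)$ is characterised as the common target of all maximal bound paths starting at $v$, equivalently as the unique ``top'' vertex of $H_{\vv}^{v}$.

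First I would pin down the top of $H^{(i,t)}$ inside $\zzs{n-1}Q$. Via the identification of Theorem \ref{cmpsmashp}, $\zzs{n-1}Q$ is a connected component of $\zzv\tQ$, and on that component the defining formula $\tau_{\vv}(v,m)=(\tau_{\tQ}v,m-n-1)$ shows that $\tau$ acts by subtracting $n+1$ from the third coordinate. Because $\tL=\dtL$ is a trivial extension and therefore symmetric, its Nakayama permutation is the identity, so $\tau_{\tQ}$ fixes every vertex of $\tQ$. Consequently $\tau^{-1}(i,t,t)=(i,t,t+n+1)$, which under the convention that the second index is read modulo $n+1$ is the same vertex as $(i,t+n+1,t+n+1)$.

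Second I would apply Lemma \ref{tower:hammock}: the vertex set of $H_{\vv}^{(i,t,s)}$ consists of all $(j,t',s')$ with $(j,t',t')\in H^{(i,t)}$ and $s-t\le s'-t'\le s-t+1$. The farthest vertex is obtained by taking $(j,t',t')$ at the top of $H^{(i,t)}$ together with the maximal value $s'-t'=s-t+1$. Substituting $(j,t',t')=(i,t+n+1,t+n+1)$ then yields $s'=t'+(s-t+1)=s+n+2$, so $\tau_{\vv}^{-1}(i,t,s)=(i,t+n+1,s+n+2)$, as claimed.

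The main obstacle is the justification that the Nakayama permutation of $\tL$ is the identity, so that $\tau_{\tQ}$ fixes vertices; once that is granted, everything else reduces to combining the two hammock coordinates via Lemma \ref{tower:hammock}.
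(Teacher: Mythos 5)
Your proof is correct and follows essentially the same route as the paper, which likewise reads the formula off the hammock description of Lemma \ref{tower:hammock}: the unique vertex at maximal level $n+2$ in $H_{\vv}^{(i,t,s)}$ is obtained from the top $\tau^{-1}(i,t,t)=(i,t+n+1,t+n+1)$ of $H^{(i,t)}$ with $s'-t'=s-t+1$, giving $(i,t+n+1,s+n+2)$. Your detour through the Nakayama permutation of the trivial extension is sound but not strictly needed, since the $n$-translation on $\zzs{n-1}Q$ is defined by $\tau(i,m)=(i,m-1)$ and hence fixes the $Q_0$-component by construction.
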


Now we can embed the multi-layer quiver $\htQ$ of $Q$ into $\zzv\ttQ$ as the full subquiver with the vertex set $\{(i,t,s)\mid (i,t) \in \Phi_{i_0}(Q)_0, 0\le t < n+1, t \le s \le n+1+t \} $.
We identify $Q_0$ with $\Phi_{i_0}(Q)_0$, that is  use the notation of the image of $\Phi_{i_0}$ as the notation of a vertex in $Q_0$.
The following proposition can be proven directly using Lemma \ref{tower:tauvv}.

\begin{thm}\label{tower:slicevv}
$\htQ$ is a complete $\ttv$-slice in $\zzv\ttQ$.
\end{thm}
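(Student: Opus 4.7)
To show that $\htQ$ is a complete $\ttv$-slice of $\zzv\ttQ$, I will verify the two defining properties of a complete $\tau$-slice: (a) every $\ttv$-orbit of $\zzv\ttQ$ meets the vertex set of $\htQ$ in exactly one point, and (b) $\htQ$ is convex in $\zzv\ttQ$. The argument will be driven by the single auxiliary quantity $r = s - t$ attached to each vertex $(i, t, s)$ of $\zzv\ttQ$, which coincides with the floor index in the definition of $\htQ$.

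For (a), Lemma \ref{tower:tauvv} gives $\ttv^{-1}(i, t, s) = (i, t + n + 1, s + n + 2)$. Since the second coordinate of a vertex in $\tQ$ is read modulo $n+1$, this vertex coincides with $(i, t, s + n + 2)$ after the identification, so the $\ttv$-orbit of $(i, t, s)$ is $\{(i, t, s + m(n+2)) \mid m \in \zZ\}$. The defining constraint $t \le s \le t + n + 1$ of $\htQ_0$ picks out a window of $n + 2$ consecutive integers in the third coordinate, forming a complete residue system modulo $n+2$. Hence exactly one value of $m$ falls in the window, and the orbit meets $\htQ_0$ in a unique vertex.

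For (b), the key observation is the monotonicity of $r = s - t$ along directed paths. Classifying arrows by type via Lemma \ref{zzvttq:arrows}: an $\alpha$-arrow $(i, t, s) \to (j, t+1, s+1)$ fixes $r$; a $\gamma$-arrow $(i, t, s) \to (i, t, s+1)$ increases $r$ by $1$; and a $\beta$-arrow, using the homogeneity of $Q$ (depth $n$, so every maximal bound path goes from height $0$ to height $n$), lifts uniformly to $(i, n, s) \to (j, 0, s+1)$ and so increases $r$ by $n + 1$. Consequently $r$ is non-decreasing along any path in $\zzv\ttQ$. Given a path from $(i_1, t_1, s_1) \in \htQ_0$ to $(i_2, t_2, s_2) \in \htQ_0$, any intermediate vertex $(i, t, s)$ satisfies $0 \le s_1 - t_1 \le s - t \le s_2 - t_2 \le n + 1$, hence lies in $\htQ_0$. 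This yields convexity, and combined with (a) identifies $\htQ$ as a complete $\ttv$-slice.

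The principal subtlety, and likely the only place a careless proof would slip, is the bookkeeping between the folded and unfolded descriptions of the second coordinate: the orbit computation in (a) relies on the identification $t + n + 1 \equiv t$ in $\tQ$, while the uniform behavior of $\beta$-arrows in (b) requires $Q$ to be homogeneous so that every returning arrow starts at height exactly $n$ and ends at height exactly $0$ — an assumption built into the normalization $0 \le t \le n$ imposed on the vertices of $Q$ at the start of this section.
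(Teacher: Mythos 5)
Your proof is correct and follows the same route the paper intends: the paper leaves this theorem as an immediate consequence of Lemma \ref{tower:tauvv} (the formula $\ttv^{-1}(i,t,s)=(i,t+n+1,s+n+2)$ with $t$ read modulo $n+1$), and your verification of the orbit condition via the residue window of length $n+2$ and of convexity via the monotonicity of $r=s-t$ along arrows of types $\xa$, $\xb$, $\xc$ is exactly the omitted computation, including the correct use of homogeneity to pin down the $\xb$-arrows.
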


Since for an $n$-translation algebra $\tL$ with bound quiver $\tQ$, which is an $n$-translation quiver, its (twisted) trivial extension is an $(n+1)$-translation algebra $\ttL$ with bound quiver $\ttQ$, which is a stable $(n+1)$-translation quiver, by Proposition 4.2 of \cite{g16}.
So by Proposition \ref{0nap:niceg}, $\zZ_{\vv}\ttQ$ is stable $(n+1)$-translation quiver and by Proposition \ref{slice:sub}, the multi-layer quiver $\htQ$, being a complete $\tau_{\vv}$-slice of $\zZ_{\vv}\ttQ$ by the above theorem, is an $(n+1)$-properly-graded quiver.
This gives another proof of Proposition \ref{quiver}.

\begin{thm}\label{tower:np1}
If $Q$ is an $n$-properly-graded quiver, then its multi-layer quiver $\htQ$ is an $(n+1)$-properly-graded quiver.

If $Q$, $\tQ$ and $\ttQ$ are quadratic, then $\htQ^{\perp}$ is an $(n+1)$-slice.
\end{thm}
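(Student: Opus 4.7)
The plan is to read both assertions as corollaries of Theorem \ref{tower:slicevv} together with Proposition \ref{slice:sub}, so the work consists of verifying that the ambient quiver $\zZ_{\vv}\ttQ$ has the correct structural properties and that quadraticity is inherited from $Q$, $\tQ$, $\ttQ$ to $\htQ$.

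For the first assertion, I would assemble the chain
\[
Q \;\leadsto\; \tQ \;\leadsto\; \ttQ \;\leadsto\; \zZ_{\vv}\ttQ
\]
as follows. The quiver $\tQ$ is, by construction, a stable $n$-translation quiver. Then, since the (twisted) trivial extension of an $n$-translation algebra is an $(n+1)$-translation algebra (Proposition~4.2 of \cite{g16} is the input), $\ttQ$ is a stable $(n+1)$-translation quiver. Proposition \ref{0nap:niceg} next guarantees that $\zZ_{\vv}\ttQ$ remains a stable $(n+1)$-translation quiver (now nicely-graded). Theorem \ref{tower:slicevv} identifies $\htQ$ with a complete $\ttv$-slice inside $\zZ_{\vv}\ttQ$, and Proposition \ref{slice:sub} then concludes that $\htQ$ is $(n+1)$-properly-graded. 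As the paper remarks after Theorem \ref{tower:slicevv}, this also supplies an alternative proof of Proposition \ref{quiver}.

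For the second assertion, I would check that the three quadraticity hypotheses on $Q,\tQ,\ttQ$ propagate down the same chain. Because the relation set of $\zZ_{\vv}\ttQ$ is $\rho_{\vv}=\{(\zeta,m)\mid \zeta\in\rho_{\ttQ},\,m\in\zZ\}$, each relation $(\zeta,m)$ has exactly the same length as $\zeta$, so $\zZ_{\vv}\ttQ$ is quadratic whenever $\ttQ$ is. Restricting relations to the convex $\ttv$-slice $\htQ$ preserves quadraticity, which can also be read off directly from the explicit description of $\widehat{\rho}$ in Section \ref{mulq} using Lemma \ref{relzero} for the $\beta$-type relations: when $\dtL$ is quadratic, every $\rho_{\caM}$-relation is of the form $\sum a_{\xa,t}\beta_{p_t}\xa+\sum b_{t,\xa'}\xa'\beta_{p'_t}$, and this is exactly what is needed for the $\htQ$-relations involving $\beta_f$ to have length $2$. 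Once $\htQ$ is a quadratic complete $\ttv$-slice in the stable $(n+1)$-translation quiver $\zZ_{\vv}\ttQ$, the very definition of $n$-slice recalled in Subsection \ref{stbq} declares $\htQ^{\perp}$ to be an $(n+1)$-slice.

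The only step requiring any real attention is the quadratic inheritance of the $\beta$-relations in $\htQ$; everything else is bookkeeping built on Theorem \ref{tower:slicevv} and Proposition \ref{slice:sub}. I do not anticipate a hard obstacle, since the explicit form of $\widehat{\rho}$ given in Section \ref{mulq} was engineered so that the degrees of its generators are controlled by the degrees of relations in $\rho$, by the two-arrow commutation squares between $\xa$- and $\xc$-type arrows, and, via Lemma \ref{relzero}, by the quadratic form of $\rho_{\caM}$.
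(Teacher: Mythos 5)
Your proposal is correct and follows essentially the same route as the paper: the first assertion is exactly the paper's argument via Proposition~4.2 of \cite{g16}, Proposition \ref{0nap:niceg}, Theorem \ref{tower:slicevv} and Proposition \ref{slice:sub}, and the second assertion is the paper's (largely implicit) deduction that quadraticity passes from $\ttQ$ to $\zZ_{\vv}\ttQ$ and then to the convex complete $\ttv$-slice $\htQ$, whence $\htQ^{\perp}$ is an $(n+1)$-slice by definition. Your extra check of the $\beta$-type relations via Lemma \ref{relzero} is a harmless elaboration of what the paper leaves to the reader.
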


Starting from a $n$-properly-graded quiver $Q$, by theorem \ref{tower:np1} we have an iterated construction for of higher properly-graded quivers starting at $Q$  depicted  as follows.
\tiny \eqqc{depictingqui}{\xymatrix@C=0.6cm@R=0.6cm{
Q=Q(0)\ar[d]  &                   &Q(1)=\htQ \ar[d]&                  &Q(2)=\widehat{Q(1)}\ar[d] &&\\
\tQ(0)\ar[r]  &\ttQ(1)\ar[d]   &\tQ(1) \ar[r]   &\ttQ(2)\ar[d]  &\tQ(2)\ar@{--}[rr] &&\\
                 &\zZ_\vv\ttQ(1)\ar[ruu] &                   &\zZ_\vv\ttQ(2)\ar[ruu] &&\\
}}\normalsize

We state this picture as the following theorem.

\begin{thm} \label{tower:pgquiver}
Let  $Q$ be a $n$-nicely-graded quiver, then there is a tower $Q(0)=Q, Q(1), \cdots, Q(t), \cdots$ of nicely-graded quivers such that for each $t$,

1. $Q(t+1) = \htQ(t)$ is obtained by multi-quiver construction from $Q(t)$ for all $t$.

2. $Q(t)$ is an $(n+t)$-nicely-graded quiver.

\end{thm}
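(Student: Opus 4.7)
The plan is to prove the theorem by a straightforward induction on $t$, with essentially all of the conceptual work already packaged into Proposition \ref{quiver} (equivalently, the first assertion of Theorem \ref{tower:np1}). The statement to establish has two clauses, and clause (1) is merely a definition; it is clause (2) that has real content, and only in the inductive step.

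I would proceed as follows. First, set $Q(0) = Q$, which is $n$-nicely-graded by hypothesis, giving the base case $t=0$ of clause (2). Next, define $Q(t+1) := \widehat{Q(t)}$ recursively; this recursive definition is legitimate precisely because the multi-layer construction $\widehat{(\cdot)}$ of Section \ref{mulq} requires its input to be a finite $n$-nicely-graded quiver (with a chosen source $i_0$ from which the relative degree function $\vu$ is defined), and the inductive hypothesis supplies exactly this property at each stage. For the inductive step, assume $Q(t)$ is $(n+t)$-nicely-graded; then applying Proposition \ref{quiver} with $n$ replaced by $n+t$ and $Q$ replaced by $Q(t)$ yields that $\widehat{Q(t)} = Q(t+1)$ is $(n+t+1)$-nicely-graded, i.e.\ $(n + (t+1))$-nicely-graded, completing the induction.

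Clause (1) then holds by the very definition of the tower, so there is nothing further to check. The only mild subtlety worth remarking on in the write-up is that at each stage one must fix a source of $Q(t)$ from which to compute the relative degree function needed to describe the vertex set of $\widehat{Q(t)}$; but by the nicely-graded property at stage $t$ (guaranteed by the inductive hypothesis together with Proposition \ref{nicegrd}), such a source always exists when one takes the source of minimal $\vu$-value, and the construction of $Q(t+1)$ does not depend on this choice up to isomorphism.

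The main potential obstacle, namely verifying that the multi-layer construction preserves both the properly-graded property (maximal bound paths of length exactly $n+1$) and the nicely-graded property (vanishing of $\vu_{i,w}(i)$ on cyclic walks), has already been settled in the proof of Proposition \ref{quiver}, where both the length bound on bound paths in $\widehat{Q}$ and the cancellation of arrows of type $\gamma$ along lifted cyclic walks were verified. Consequently, the proof here is a clean induction with no further combinatorial bookkeeping required, and I would keep the write-up short, explicitly invoking Proposition \ref{quiver} as the inductive step.
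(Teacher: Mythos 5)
Your proof is correct and follows essentially the same route as the paper, which offers no explicit argument beyond presenting the iterated construction in the diagram preceding the theorem and relying on Proposition \ref{quiver} (equivalently Theorem \ref{tower:np1}) as the inductive step. Your write-up is, if anything, slightly more careful than the paper's, since you note explicitly that the nicely-graded hypothesis at stage $t$ is what licenses applying the multi-layer construction again (choice of source, finiteness), whereas the paper leaves this implicit.
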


\section{Algebras and representations associated to multi-layer Quiver\label{algebras}}

Now we consider the algebras associated to the multi-layer quivers of an $n$-nicely-graded quiver.

Let $Q$ be an $n$-nicely-graded quiver and let $\LL$ be the algebra defined by $Q$.
Let $\tQ$ be the returning arrow quiver of $Q$ with relation set $\trho =\rho\cup \rho_{\caM}\cup \{\xb_p\xb_{p'}\mid p,p'\in \caM\}$.
Let $U$ be the $\LL$-bimodules with generators $\{\gamma_v \mid  v\in Q_0\}$ and relation set
$$\{e_{v'} \gamma_v - \delta_{v,v'} \gamma_v, \gamma_v e_{v'}- \delta_{v,v'} \gamma_v\mid  v,v'\in Q_0\}\cup\{ \xa\xc_{\vs(\xa)} -\xc_{\vt(\xa)}\xa \mid \xa\in Q_1\}.$$
Note that $D\LL$ is the $\LL$-bimodule with generators $\{\beta_p \mid  p\in \caM\}$ and relation set $\rho_{0,\caM} \cup  \rho_{\caM}$ where $$\rho_{0,\caM} = \{\xb_pe_{v} -\delta_{\vt(p),v}\xb_p, e_{v}\xb_p-\delta_{\vs(p),v}\xb_p\mid  p\in \caM \}.$$
Let \eqqc{alg}{M(\LL,U,D\LL)= \mat{cccccc}{\LL &0&0&\cdots &0&0\\ U&\LL &0&\cdots &0&0\\ 0 & U&\LL &\cdots &0&0\\ \cdot & \cdot & \cdot  &\cdots &\cdot  &\cdot \\0 &0 &0&\cdots &0&0\\ 0 & 0 &0&\cdots&\LL&0\\ D\LL & 0 &0&\cdots& U&\LL}} be the matrix algebra of $(n+2)\times(n+2)$ matrices defined on $\LL$, $U$ and $D\LL$.

We have the following result.
\begin{pro}\label{multialg}
If $\LL$ is the algebra defined by an $n$-nicely-graded quiver  $Q$, then $M(\LL,U,D\LL)$ is the algebra defined by the multi-layer quiver $\htQ$ of $Q$, that is, if $\LL\simeq kQ/(\rho)$ then $M(\LL,U,D\LL) \simeq k\htQ/(\widehat{\rho})$.
\end{pro}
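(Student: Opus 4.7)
The plan is to exhibit an explicit algebra homomorphism $\phi : k\htQ \to M(\LL,U,D\LL)$ by sending each generator of $k\htQ$ into the obvious entry of the matrix algebra, verify that $\widehat{\rho}$ lies in the kernel, and then show that the induced map $\bar\phi : k\htQ/(\widehat{\rho}) \to M(\LL,U,D\LL)$ is a bijection via a basis comparison.

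First I would set up the correspondence on vertices and arrows. Number the rows and columns of the $(n+2)\times(n+2)$ matrix from $0$ to $n+1$. Send the vertex $(i,\vu(i),\vu(i)+r)$ of $\htQ$ to the matrix unit whose sole nonzero entry is $e_i\in\LL$ in the $(r,r)$ block; send the arrow $(\xa,\vu(i),\vu(i)+r)$ to $\xa$ placed in the $(r,r)$ block; send $(\gamma_i,\vu(i),\vu(i)+r)$ to the generator $\gamma_v$ of $U$ in the $(r+1,r)$ block; and send the type-$\xb$ arrow $(\beta_p,\vu(\vt(p)),\vu(\vt(p)))$ to $\beta_p\in D\LL$ in the $(n+1,0)$ block. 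Extended multiplicatively this defines $\phi$ on $k\htQ$.

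Next I would check the four families of relations in $\widehat{\rho}$. For each $(x,\vu(x),\vu(x)+r)$ with $x\in\rho$ the image lies in the $(r,r)$ block and equals $x=0$ in $\LL$. Each double-$\gamma$ relation $(\gamma_i,\vu(i),\vu(i)+r+1)(\gamma_i,\vu(i),\vu(i)+r)$ lands in the $(r+2,r)$ block, which is $0$ in the matrix. The commutativity relation $(\xa,\cdot)(\gamma_i,\cdot)-(\gamma_j,\cdot)(\xa,\cdot)$ lands in the $(r+1,r)$ block and becomes $\xa\gamma_v-\gamma_{v'}\xa$, which vanishes by the defining relation of $U$ as a $\LL$-bimodule. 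Finally each $\sum c_f(x_f)(\beta_f)(y_f)$ relation lands in the $(n+1,0)$ block and becomes $\sum c_f x_f \beta_f y_f \in \rho_{\caM}$, which is exactly the bimodule relation for $D\LL$. Implicit here is that products involving two type-$\xb$ arrows or mixing type $\xb$ with type $\gamma$ land outside the supported blocks of the matrix and hence vanish automatically, which accounts for the relations $\beta_p\beta_{p'}$ and the absence of $U\cdot D\LL$ or $D\LL\cdot U$ entries in the matrix.

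This produces a well-defined surjective homomorphism $\bar\phi$. To establish injectivity I would compare $k$-bases. On the matrix side the dimension is $(n+2)\dim\LL+(n+1)\dim U+\dim D\LL$, and since $U\cong \LL$ as $S$-$S$-bimodules (generated by the orthogonal $\gamma_v$ satisfying the commutativity relations, which force $\dim e_j U e_i = \dim e_j \LL e_i$) and $\dim D\LL=\dim \LL$, the total dimension equals $(2n+4)\dim\LL$. On the quiver side, any nonzero bound path in $\htQ$ is, by the commutativity relations of type $\gamma$ together with Proposition \ref{quiver} and the analysis in its proof, equivalent to a unique normal form: either a path entirely within one of the $n+2$ floors, or a single $\gamma$ preceded/followed by paths in $Q$ bridging two adjacent floors, or a path of the form $x_f\beta_p y_f$ through the unique type-$\xb$ stratum. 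Counting these classes of normal forms recovers exactly the same dimension, and $\bar\phi$ maps each normal form to a distinct basis element of the corresponding block. The main obstacle is the bookkeeping in this last step: one must verify that the commutativity relations indeed reduce every bound path to a canonical representative and that $\phi$ is bijective on these representatives, which uses Proposition \ref{quiver}, Lemma \ref{relzero}, and the bimodule presentations of $U$ and $D\LL$ in a coordinated way.
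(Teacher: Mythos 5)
Your proposal is correct and follows essentially the same route as the paper: the paper's proof likewise defines $\Phi$ on generators by placing $e_{i,t}$, $\xa_{i,t}$, $\xc_{i,t}$, $\xb_p$ into the appropriate matrix units (diagonal, subdiagonal, corner), observes surjectivity, and asserts $\ker\Phi=(\widehat{\rho})$. Your version merely spells out the kernel verification and the basis/dimension comparison that the paper dismisses as ``easy to see,'' and uses a harmless relabeling of the matrix indices ($0,\dots,n+1$ versus the paper's $E_{n+2-r,n+2-r}$ convention).
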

\begin{proof}
Let $\LL \simeq kQ/(\rho)$, using the same notation for the elements in $kQ$ and their images in $\LL$.
Denote $E_{s,t}$ the matrix unite with the $(s,t)$-entry $1$ and all the other entries $0$.
Define a map \eqqcn{}{\Phi :k \htQ \to M(\LL,U,D\LL)} by \eqqcn{}{\arr{lll}{\Phi(e_{i,t,t+r}) &=& e_{i,t}E_{n+2-r,n+2-r}\\ \Phi(\xa_{i,t,t+r}) &=&\xa_{i,t}E_{n+2-r,n+2-r} \\
\Phi(\xb_{p}) &=&\xb_{p}E_{n+2,1} \\ \Phi(\xc_{i,t,t+r}) &=&\xc_{i,t}E_{n+2-r,n+1-r}.
}}
Then $\Phi$ defines an epimorphism from $k\htQ $ to $M(\LL,U,D\LL)$ and it is easy to see that  $\ker \Phi = (\widehat{\rho})$.
So $M(\LL,U,D\LL)\simeq k\htQ/(\widehat{\rho})$.
\end{proof}

Write $\wht{\LL}=  M(\LL,U,D\LL)$ and we call it {\em the multi-layer algebra of $\LL$}.

\medskip

Write $\wht{S} = \bigoplus_{t=0}^{n+1} \LL\sk{t}$ is a direct sum of $(n+2)$-copies of the algebra $\LL$.
For $0\le t \le n$, let $U\sk{t}$ be the $\LL\sk{t+1}$-$\LL\sk{t}$-bimodule defined on $U$, by setting $\LL\sk{s} U\sk{t} =0$ if $s\neq t+1$, and $  U\sk{t}\LL\sk{s} =0$ if $s\neq t$, $U\sk{t}$ becomes a $\wht{S}$-bimodule.
Let $D\LL\sk{0}$ be the $\LL\sk{n+1}$-$\LL\sk{0}$-bimodule defined on $D\LL$, by setting $\LL\sk{s} D\LL\sk{0} =0$ if $s\neq n+1$, and  $  D\LL\sk{0} \LL\sk{s} =0$ if $s\neq 0$, it becomes a $\wht{S}$-bimodule.
Set $\wht{V} = \sum_{t=0}^{n} U\sk{t} +D\LL\sk{0}$, it is a $\wht{S}$-bimodule.

Then we have obviously the following proposition.

\begin{thm}\label{multialgtensor}
If $\LL$ is $n$-nicely-graded algebra, then $\wht{\LL} \simeq T_{\wht{S}}\wht{V}/(\wht{V}\otimes_{\wht{S}} \wht{V}) $.
\end{thm}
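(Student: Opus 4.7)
The plan is to exploit the matrix description of $\wht{\LL}$ from Proposition~\ref{multialg} and to identify $M(\LL,U,D\LL)$ with the claimed tensor algebra quotient directly. First I would observe that under the identification $\wht{\LL}\cong M(\LL,U,D\LL)$, the algebra decomposes as an $\wht{S}$-bimodule into the diagonal part $\wht{S}=\bigoplus_{t=0}^{n+1}\LL\sk{t}$, with each $\LL\sk{t}$ sitting in the $(t{+}1,t{+}1)$-entry, and the strictly ``off-diagonal'' part, consisting of the subdiagonal $U\sk{t}$-entries at positions $(t{+}2,t{+}1)$ for $0\le t\le n$ together with the $(n{+}2,1)$-entry $D\LL\sk{0}$. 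This off-diagonal part is precisely $\wht{V}$ with the prescribed $\wht{S}$-bimodule structure, since the left- and right-$\wht{S}$-actions read off the row and column indices.

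Next, by the universal property of the tensor algebra, the inclusions of $\wht{S}$ and $\wht{V}$ into $\wht{\LL}$ extend uniquely to an algebra homomorphism $\phi:T_{\wht{S}}\wht{V}\to\wht{\LL}$. I would then verify that $\phi$ annihilates $\wht{V}\otimes_{\wht{S}}\wht{V}$. Because of the bimodule-matching constraints imposed by $\wht{S}$, most summands of $\wht{V}\otimes_{\wht{S}}\wht{V}$ vanish automatically: only $U\sk{s+1}\otimes_{\wht{S}}U\sk{s}$ for $0\le s\le n-1$ survive, since the right-action on $D\LL\sk{0}$ (through $\LL\sk{0}$) is not matched by the left-action of any other summand of $\wht{V}$, and similar checks rule out the other mixed combinations. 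For the surviving summands, matrix multiplication sends $U\sk{s+1}\cdot U\sk{s}$ into the $(s{+}3,s{+}1)$-entry of $M(\LL,U,D\LL)$, which is a zero entry by Proposition~\ref{multialg}; hence $\phi$ factors through a well-defined $\bar{\phi}:T_{\wht{S}}\wht{V}/(\wht{V}\otimes_{\wht{S}}\wht{V})\to\wht{\LL}$.

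Finally, I would show $\bar{\phi}$ is an isomorphism. Since all tensors of length $\ge 2$ vanish in the quotient, the source of $\bar{\phi}$ is simply $\wht{S}\oplus\wht{V}$ as an $\wht{S}$-bimodule, with multiplication determined by the $\wht{S}$-bimodule action on $\wht{V}$ together with the rule $\wht{V}\cdot\wht{V}=0$. Under the matrix identification this is exactly the additive and multiplicative structure of $M(\LL,U,D\LL)$, so $\bar{\phi}$ restricts to an identification on each of the summands $\wht{S}$ and $\wht{V}$ and is therefore bijective.

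The main obstacle should really just be careful bookkeeping of the $\wht{S}$-bimodule structures---verifying that tensor products with non-matching $\wht{S}$-actions vanish, and that the surviving products all land in zero matrix entries. This reduces to the observation that $\wht{Q}$ contains no composable pair of ``vertical'' arrows (two arrows of type $\gamma$, two of type $\beta$, or one of each kind), which parallels the vanishing of the corresponding matrix entries, so once Proposition~\ref{multialg} is in hand the remaining work is essentially formal.
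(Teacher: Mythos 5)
The paper offers no written argument for this theorem --- it is stated as an immediate consequence of the definitions of $\wht{S}$ and $\wht{V}$ together with Proposition \ref{multialg} --- and your proposal is exactly the fleshing-out of that intended argument: decompose $M(\LL,U,D\LL)$ as $\wht{S}\oplus\wht{V}$ over $\wht{S}$, check the multiplication is square-zero on $\wht{V}$, and invoke the universal property of the tensor algebra. Your approach is therefore the right one, but there are two small inaccuracies in the step where you verify $\wht{V}\cdot\wht{V}=0$. First, the claim that $U\sk{s+1}\cdot U\sk{s}$ always lands in a \emph{zero} matrix entry fails precisely when $n=1$: there the product of the $(3,2)$- and $(2,1)$-entries lands in the $(3,1)$-entry, which is $D\LL$, not $0$. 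Second, your closing remark that $\htQ$ contains no composable pair of arrows of type $\gamma$ is not right: $(\gamma_{i},\vu(i),\vu(i)+r+1)(\gamma_{i},\vu(i),\vu(i)+r)$ is a genuine path in $\htQ$; it vanishes in $\htL$ because it belongs to $\widehat{\rho}$, not because the arrows fail to compose. Both points are repaired by the same observation: the relation set $\widehat{\rho}$ contains the composites of consecutive $\gamma$-arrows, so under the isomorphism $\htL\simeq k\htQ/(\widehat{\rho})$ of Proposition \ref{multialg} the induced multiplication map $U\otimes_{\LL}U\to D\LL$ (in the only case, $n=1$, where the target entry is nonzero) is the zero map. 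With that one-line correction your argument is complete and coincides with what the paper leaves implicit.
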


If $\dtL$ is $(p,q)$-Koszul for $p, q\ge 2$, then it is quadratic, so $\LL$ is quadratic, too.
Clearly, we have the following Proposition from the definition.
\begin{pro}\label{qdr} If $\dtL$ is $(p,q)$-Koszul for $p, q\ge 2$, then $\htQ$ is quadratic.
\end{pro}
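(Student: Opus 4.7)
The plan is to trace the hypothesis through the concrete form of the relation set $\widehat{\rho}$ and verify that every generator is already of length two. The key preliminary step is to observe that $(p,q)$-Koszulness of $\dtL$ with $p \geq 2$ forces $\dtL$ to be quadratic, i.e.\ $\tilde{\rho} \subset k\tQ_2$. Since $\tilde{\rho} = \rho \cup \rho_{\caM} \cup \{\xb_p\xb_{p'} \mid p,p' \in \caM\}$ and $\rho$ is a set of relations supported on paths in $Q \subset \tQ$, we get $\rho \subset kQ_2$ immediately. In particular $\LL$ itself is quadratic, and Lemma \ref{relzero} applies to give the explicit form
\[
\sum_{t} a_{\xa,t}\,\xb_{p_t}\xa \;+\; \sum_{t} b_{t,\xa'}\,\xa'\xb_{p'_t}
\]
for every element of $\rho_{\caM}$.

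Next I would inspect the four families of generators listed in the definition of $\widehat{\rho}$ one by one. Family (1), the lifts $(x,\vu(x),\vu(x)+r)$ of $x \in \rho$ to each floor $Q[r]$, have the same length as $x$, so they are quadratic by the previous step. Families (2) and (3), the relations $(\gamma_i,\ast)(\gamma_i,\ast)$ forcing composition of two vertical arrows to vanish and the commutation relations $(\xa,\ast)(\gamma_i,\ast) - (\gamma_j,\ast)(\xa,\ast)$, are manifestly of length two by inspection.

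For family (4), the generators
\[
\sum_{f \in \caM} c_f\,(x_f,\ast)(\xb_f,\ast)(y_f,\ast)
\]
indexed by elements $\sum_f c_f x_f f^{\ast} y_f$ of $\rho_{\caM}$ are paths of length $l(x_f) + 1 + l(y_f)$ in $\htQ$. By the form of $\rho_{\caM}$ obtained from Lemma \ref{relzero}, in each summand exactly one of $x_f, y_f$ is a trivial idempotent and the other is a single arrow, so the total length is $0 + 1 + 1 = 2$. Thus family (4) is quadratic as well.

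Combining these observations, every generator of $\widehat{\rho}$ lies in $k\htQ_2$, and so $\htQ$ is a quadratic bound quiver. The only nontrivial ingredient is the implication ``$(p,q)$-Koszul with $p,q \geq 2$ implies quadratic'' for $\dtL$, which is where I expect the main bookkeeping to happen — once $\dtL$ is quadratic, the reduction to Lemma \ref{relzero} and the termwise verification above are routine.
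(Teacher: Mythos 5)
Your proof is correct, but it takes a genuinely different route from the paper. You verify quadraticity directly from the explicit presentation of $\widehat{\rho}$ given in Section \ref{mulq}: the lifts of $\rho$, the $\gamma\gamma$ relations and the $\xa\gamma-\gamma\xa$ commutators are quadratic by inspection, and the fourth family is quadratic because Lemma \ref{relzero} forces each term of an element of $\rho_{\caM}$ to be of the form $\beta_p\xa$ or $\xa'\beta_{p'}$, so that $l(x_f)+1+l(y_f)=2$. The only input you need is that $\dtL$ is quadratic, which the paper itself extracts from the $(p,q)$-Koszul hypothesis in the sentence preceding the proposition. The paper's written proof instead climbs one level higher: it uses the linearity of the first $q$ terms of the projective resolutions of the simples of the second trivial extension $\ttL$ (via Lemma 3.2 of \cite{gyz14} together with \cite{bbk02} and \cite{bgs96}) to conclude that $\ttL$, hence $\ttQ$, hence $\zzv\ttQ$ is quadratic, and then descends to $\htQ$ as a complete $\tau_{\vv}$-slice. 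Your argument is more elementary and actually works under the weaker hypothesis that $\dtL$ is quadratic; what the paper's route buys is the quadraticity of $\ttQ$ and $\zzv\ttQ$ themselves, which guarantees that the listed set $\widehat{\rho}$ really is the full relation set that $\htQ$ inherits as a $\tau_{\vv}$-slice of $\zzv\ttQ$, and which is needed later (e.g.\ in Theorems \ref{tower:np1} and \ref{alg:inf}); your termwise check establishes quadraticity of the explicitly presented bound quiver but does not recover those facts. One small slip: it is the index counting the number of linear steps in the resolution (not the Loewy-length index) being at least $2$ that forces quadraticity; since the hypothesis assumes both $p,q\ge 2$ this is harmless here.
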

\begin{proof}
Let $\tL$ be the trivial extension of  $\LL$ and let $\ttL$ be the trivial extension of  $\tL$.
By the proof of Lemma 3.2 of \cite{gyz14}, the simples of $\ttL$ have projective resolution which is linear at first $q$ terms, that is, the $t$th term is generated at degree $t$ for $0 \le t \le q$.

So by Section 3.3 of \cite{bbk02} and Section 2.3 of \cite{bgs96}, $\ttL$ is quadratic.
As the bound quiver of $\ttL$, the quiver $\ttQ$ is quadratic. So $\zzv\ttQ$ is quadratic, by the construction.
Thus $\htQ$ is quadratic, since it is a complete $\tau_\vv$-slice of  $\zzv\ttQ$.
\end{proof}

It is natural to write the quadratic dual of $\htQ$ as $\htQ^{\perp}$ when $\htQ$ is quadratic.

\medskip

Now assume that $Q$ is quadratic and $Q^{\perp}$ is its quadratic dual.
Let $\GG$ be the quadratic dual of $\LL$, then we have that $\GG \simeq kQ/(\rho^{\perp})$.
Let $\htU $ be the $\GG$-bimodule with generators $\{\gamma_v \mid  v\in Q_0\}$ and relation set  $$\{e_{v'} \gamma_v - \delta_{v,v'} \gamma_v, \gamma_v e_{v'}- \delta_{v,v'} \gamma_v\mid  v,v'\in Q_0\}\cup \{\xa\xc_{\vs(\xa)} + \xc_{\vt(\xa)}\xa\mid \xa\in Q_1\}.$$
It follows from Lemma 5.1 of  \cite{g20}, the $\GG$-bimodule generated by $\{\beta_p\mid p\in \caM\}$ with relation $\rho_{0,\caM}\cup {\rho_{\caM}}^{\perp}$ is isomorphic to $\Ext_{\GG}^n(D\GG,\GG)$.
Let $\tilde{U} = \Ext^n_{\GG}(D\GG,\GG)$.
Define the matrix algebra
\eqqc{algg}{ M^*(\GG, \htU,\tilde{U})= \mat{cccccc}{\GG &0&0&\cdots &0&0\\ \htU&\GG &0&\cdots &0&0\\ \htU^{\otimes_{\GG}2} & \htU&\GG &\cdots &0&0\\ \cdot & \cdot & \cdot  &\ddots &\cdot  &\cdot\\ \htU^{\otimes_{\GG}n} & \htU^{\otimes_{\GG}n-1} &\htU^{\otimes_{\GG}n-2}&\cdots&\GG&0\\ \tilde{U} \oplus \htU^{\otimes_{\GG}n+1} & \htU^{\otimes_{\GG}n} &\htU^{\otimes_{\GG}n-1}&\cdots& \htU&\GG}.}
It is obvious that we have the following result.
\begin{thm}\label{qdual}
 $ M^*(\GG, \htU,\tilde{U})=k\htQ/(\hat{\rho}^{\perp})$.
\end{thm}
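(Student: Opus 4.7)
The plan is to mimic the proof of Proposition \ref{multialg} but work on the quadratic dual side, and then translate the matrix entries into tensor powers of $\htU$ over $\GG$, plus the $\tilde U$-summand coming from the returning arrows. First, since $\dtL$ is quadratic (Proposition \ref{qdr} furnishes this whenever needed), $\htQ$ is a quadratic bound quiver, so $\htQ^{\perp}=(\htQ_0,\htQ_1,\hat\rho^{\perp})$ is defined, and by direct inspection of the three families of relations making up $\hat\rho$, the orthogonal set $\hat\rho^{\perp}$ consists of:
the dual relations $(x^{\perp},\vu(x),\vu(x)+r)$ with $x^{\perp}\in\rho^{\perp}$ on each floor $Q[r]$;
the \emph{anti-commutative} relations $(\xa,\vu(i),\vu(i)+r+1)(\gamma_i,\vu(i),\vu(i)+r)+(\gamma_j,\vu(j),\vu(j)+r)(\xa,\vu(i),\vu(i)+r)$ between successive floors; and
the orthogonal $\rho_{\caM}^{\perp}$ assembled through the arrows $\beta_p$ connecting $Q[0]$ to $Q[n+1]$. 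The composites $\gamma_{i,t,t+r+1}\gamma_{i,t,t+r}$ and the $\beta\beta$ composites carry no new relations since the corresponding slots in $\hat\rho$ were empty or zero.

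Next I would define an algebra map
\[
\Phi^{\ast}:k\htQ^{\perp}\longrightarrow M^{\ast}(\GG,\htU,\tilde U)
\]
on generators by sending the idempotent $e_{i,t,t+r}$ to $e_{i,t}E_{n+2-r,n+2-r}$ placed in the $r$-th diagonal copy of $\GG$, sending each dual arrow $(\xa,\vu(i),\vu(i)+r)$ to $\xa_{i,t}E_{n+2-r,n+2-r}$ in the same diagonal block, sending $(\gamma_{i,\vu(i),\vu(i)+r})$ to $\gamma_i E_{n+1-r,n+2-r}$ (a generator of the sub-diagonal bimodule $\htU$ in the appropriate block), and sending $\beta_p$ to the corresponding generator of $\tilde U=\Ext^n_{\GG}(D\GG,\GG)$ sitting in the $(n+2,1)$ entry. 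Well-definedness amounts to checking that each family in $\hat\rho^{\perp}$ maps to zero: the $\rho^{\perp}$-relations hold inside the diagonal $\GG$ blocks; the sign-corrected commutation relations are exactly the defining relations of $\htU$ as a $\GG$-bimodule (note the sign change compared to $U$ comes from passing from $\LL$ to its quadratic dual $\GG$, see Lemma 5.1 of \cite{g20}); and the relations assembled from $\rho_{\caM}^{\perp}$ realise precisely the bimodule $\tilde U$ in the $(n+2,1)$ slot by the same Lemma 5.1 identification.

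Surjectivity then follows because the matrix units in each sub-diagonal block $(n+2-r-k,n+2-r)$ with $k\ge 1$ are reached by iterated composition of the $\gamma$-arrows across $k$ consecutive floors, and these iterated products correspond by construction to elements of $\htU^{\otimes_{\GG}k}$; the $(n+2,1)$-entry is generated by the image of the single returning arrows $\beta_p$ together with the $n+1$-fold $\gamma$-composites, giving the summand $\tilde U\oplus\htU^{\otimes_{\GG}n+1}$. For injectivity I would compare dimensions floor by floor: the $(s,t)$-entry of $M^{\ast}(\GG,\htU,\tilde U)$ has the same graded dimension as the bound-path space in $k\htQ^{\perp}/(\hat\rho^{\perp})$ between the corresponding vertex sets, because a bound path in $\htQ^{\perp}$ between a vertex of $Q[r]$ and a vertex of $Q[r']$ can be normalised, using the anti-commutation relations, to stack all $\gamma$-arrows at one end, yielding exactly a $\gamma$-stack followed (or preceded) by an element of $\GG$, except for the wrap-around case which contributes the $\tilde U$ summand via the unique $\beta$-arrow permitted in any bound path (Propositions~\ref{boundpath_rq}, \ref{boundpath_zq}).

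The main obstacle will be the normalisation/dimension-count step: verifying that after using the anti-commutation of $\gamma$ with arrows of $Q^{\perp}$, every bound path in $\htQ^{\perp}$ between two prescribed vertices is a scalar multiple of a canonical $\gamma$-stacked representative, and that the $(n+2,1)$-entry receives exactly two independent contributions, one from the pure tensor $\htU^{\otimes_{\GG}n+1}$ and one from $\tilde U$ via $\beta_p$. Once this normal form is secured, $\ker\Phi^{\ast}=(\hat\rho^{\perp})$ follows and the theorem is established.
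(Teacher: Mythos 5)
Your argument is correct and follows the route the paper intends: the paper itself offers no proof beyond declaring the statement obvious, and your expansion is precisely the quadratic-dual analogue of the explicit map $\Phi$ used to prove Proposition~\ref{multialg}, with the entries identified via the bimodules $\htU$ and $\tilde{U}$ exactly as the definition of $M^*(\GG,\htU,\tilde{U})$ anticipates. One small slip: the reason $\hat{\rho}^{\perp}$ contains no relation in the $\gamma\gamma$ slots is that those two-path spaces are \emph{entirely} spanned by relations of $\widehat{\rho}$, so their orthogonal complement is zero --- your conclusion is right, but the stated justification (``the corresponding slots in $\widehat{\rho}$ were empty'') is backwards.
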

Write $\htG  =M^*(\GG, \htU,\tilde{U})  $, then $\htG = \wht{\LL}^{!,op}$   and we call it {\em the multi-layer algebra of $\GG = \LL^{!,op}$}.

\medskip

Write $\wht{S}^* = \bigoplus_{t=0}^{n+1} \GG\sk{t}$ is a direct sum of $(n+1)$-copies of the algebra $\GG$.
For $0\le t \le n+1$, let $\htU\sk{t}$ be the $\GG\sk{t+1}$-$\GG\sk{t}$-bimodule defined on $\htU$, by setting $\GG\sk{s} \htU\sk{t} =0$ if $s\neq t+1$, and $  \htU\sk{t}\GG\sk{s} =0$ if $s\neq t$, $\htU\sk{t}$ becomes a $\wht{S}^*$-bimodule.
Let $\tilde{U}\sk{0}$ be the $\GG\sk{0}$-$\GG\sk{n}$-bimodule defined on $\tilde{U}$, by setting $\GG\sk{s} \tilde{U}\sk{0} =0$ if $s\neq 0$, and  $  \tilde{U}\sk{0} \GG\sk{s} =0$ if $s\neq n+1$, it becomes a $\wht{S}^*$-bimodule.
Set $\wht{V}^* = \sum_{t=0}^{n} \htU\sk{t} +\tilde{U}\sk{0}$, it is a $\wht{S}^*$-bimodule.

Then we have obviously the following proposition.

\begin{thm}\label{multidualalgtensor}
If $\GG$ is a nice $n$-slice  algebra, then $\wht{\GG} \simeq T_{\wht{S}^*}\wht{V}^*$.
\end{thm}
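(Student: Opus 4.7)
The plan is to invoke Theorem~\ref{qdual} to identify $\htG$ with the explicit matrix algebra $M^*(\GG,\htU,\tilde U)$, and then to exhibit that matrix algebra as a free tensor algebra over the diagonal subalgebra $\wht{S}^* = \bigoplus_{t=0}^{n+1}\GG\sk{t}$. The first step is to note that $\wht{S}^*$ sits inside $\htG$ as the diagonal entries, while $\wht{V}^* = \bigoplus_{t=0}^{n}\htU\sk{t} \oplus \tilde U\sk{0}$ embeds as the first subdiagonal (placing $\htU\sk{t}$ in matrix entry $(t+2,t+1)$) together with the $\tilde U$-summand of the corner entry $(n+2,1)$. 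By the universal property of the tensor algebra, this inclusion extends uniquely to an $\wht{S}^*$-algebra homomorphism
\[
\phi\colon T_{\wht{S}^*}\wht{V}^*\longrightarrow \htG.
\]

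The second step is to compute the graded components of $T_{\wht{S}^*}\wht{V}^*$ explicitly, using the fact that $\htU\sk{t}$ is concentrated in bi-degree $(t+1,t)$ with respect to the idempotent decomposition of $\wht{S}^*$, while $\tilde U\sk{0}$ is concentrated in bi-degree $(n+1,0)$. For $1\le k\le n+1$, the degree-$k$ component of the tensor algebra splits, via the bimodule-matching conditions, into the consecutive strings $\htU\sk{t+k-1}\otimes_{\GG}\cdots\otimes_{\GG}\htU\sk{t}$ indexed by $t\ge 0$ with $t+k-1\le n$, plus (in degree $1$) the extra summand $\tilde U\sk{0}$. For $k\ge n+2$ no valid consecutive string fits in the index range $\{0,\dots,n\}$, so those components vanish identically. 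Moreover, any tensor involving $\tilde U\sk{0}$ in degree $\ge 2$ must vanish, because the left action on $\tilde U\sk{0}$ is concentrated on $\GG\sk{n+1}$ and its right action on $\GG\sk{0}$, neither of which balances against any $\htU\sk{s}$ in $\wht{V}^*$.

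The third step is to match these components entry by entry against the matrix description \eqref{algg}: the string $\htU\sk{t+k-1}\otimes_{\GG}\cdots\otimes_{\GG}\htU\sk{t}$ is sent by $\phi$ into the matrix entry $(t+k+1,t+1)$, producing $\htU^{\otimes_\GG k}$ on the $k$-th subdiagonal for $1\le k\le n$ and $\htU^{\otimes_\GG n+1}$ in the corner $(n+2,1)$, while $\tilde U\sk{0}$ contributes the separate $\tilde U$ summand of that same corner. Since matrix multiplication in $M^*(\GG,\htU,\tilde U)$ is computed exactly by composing these bimodule actions, $\phi$ is bijective in every graded piece and hence an isomorphism of algebras.

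The main point to verify carefully, and the one I would check first, is the corner entry $(n+2,1)$: one must confirm that the two summands $\htU^{\otimes_\GG n+1}$ and $\tilde U$ remain genuinely independent in $T_{\wht{S}^*}\wht{V}^*$ rather than being identified by some residual relation coming from the quadratic dual structure. This independence is precisely the content of the vanishing of all mixed tensors involving $\tilde U\sk{0}$ in degree $\ge 2$ noted above, and is consistent with Proposition~\ref{qdr} and the identification $\htG = \wht{\LL}^{!,op}$. Beyond this, the remaining verifications are routine bimodule bookkeeping.
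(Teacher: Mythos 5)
Your proof is correct and is essentially the argument the paper has in mind: the paper states this theorem without proof (``we have obviously the following proposition''), and your entry-by-entry matching of the graded pieces of $T_{\wht{S}^*}\wht{V}^*$ (consecutive strings $\htU\sk{t+k-1}\otimes_{\GG}\cdots\otimes_{\GG}\htU\sk{t}$ in degrees $1\le k\le n+1$, the extra summand $\tilde{U}\sk{0}$ in degree $1$, and nothing in degree $\ge n+2$) against the matrix algebra $M^*(\GG,\htU,\tilde{U})$ of Theorem~\ref{qdual} is exactly the intended routine verification. The one point of friction is that your vanishing argument for mixed tensors involving $\tilde{U}\sk{0}$ takes its left (resp.\ right) support to be $\GG\sk{n+1}$ (resp.\ $\GG\sk{0}$), which is the opposite of what the paper's definition of $\tilde{U}\sk{0}$ literally says; under the paper's literal convention $\htU\sk{0}\otimes_{\wht{S}^*}\tilde{U}\sk{0}$ and $\tilde{U}\sk{0}\otimes_{\wht{S}^*}\htU\sk{n}$ would not be excluded by support bookkeeping and the free tensor algebra could fail to be finite, so your reading (the $\beta$-bimodule sits between the source floor and the sink floor of the $\widetilde{\mathbf A}_{n+1}$-shape, hence composes with nothing) is the one that makes the statement true and is evidently what the authors intend.
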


Multi-layer algebra construction provided approaches such as representation theory of category to study higher representation theory, as indicating below.

\medskip

We present now the equivalences between the category of modules of the multi-layer algebra and category of the modules of a diagram $\frD$ over the free category $\widetilde{\mathbf A}_{n+1}$ of the quiver $$
\xymatrix@C=2.0cm@R1.5cm{\stackrel{0}{\circ} \ar[r]^{\xc_0} \ar@/_/[rrr]_{\xb} &\stackrel{1}{\circ} \ar@{..}[r] &\stackrel{n}{\circ} \ar[r]^{\xc_n} & \stackrel{n+1}{\circ} }.$$

Let $\caC $ be a skeletal small category, recall that {\em  a strict diagram $\frD $} of $\caC$ is a $2$-functor from $\caC$, naturally view as a $2$-category, to the meta-2-category of locally small abelian categories (see \cite{dlly22b}).
So a strict diagram $\frD $ of $\caC$ consists of the following data:
\begin{enumerate}
\item for any object $i$ in $\mathrm{Ob}(\caC)$, an abelian category $\frD_i$,
\item for any morphism $\xa: i \to j$ in $\mathbf{Mor}(\caC)$, a functor $\frD_{\xa} : \frD_i \to \frD_j$,
\item for any object $i$ in $\mathrm{Ob}(\caC)$, we have $ \frD_{e_i} =\id_{\frD_i}$, and
\item for any pair of composable morphisms $\xa$ and $\xb$ in $\mathrm{Mor}(\caC)$,  $ \frD_{\xb\circ\xa}= \frD_{\xb}\circ \frD_{\xa}.$
\end{enumerate}
We will omit "strict" in the rest of paper, since the diagram of the free category of a quiver, which is used in this paper, is isomorphic to an strict one (Theorem 1.35 of \cite{dlly22b}.

Let $\frD$ be a diagram  of $\caC$.
{\em A (left) module $M$} of the diagram $\frD$ is defined by the following data:
\begin{enumerate}
\item  for any object $i$ in $\Ob(\caC)$, an object $M_i$ in $\frD_i$, and
\item  for any morphism $\xa : i \to j$ in $\Mor(\caC)$, a morphism $M_{\xa} : \frD_{\xa} (M_i) \to M_j$ in $\frD_j$
such that the following two axioms are satisfied.
 \begin{enumerate}
 \item   Given two composable morphisms  $i \stackrel{\xa}{\to} j \stackrel{\xb}{\to} k$ in $\mathrm{Mor}(\caC)$, the diagram
\eqqc{repcomm}{\xymatrix@C=2.0cm@R1.5cm{
\frD_{\xb}\circ\frD_{\xa}(M_i) \ar[r]^{M_{\xb\circ \xa}} \ar@{=}[d]& M_k  \ar@{<-}[d]^{M_{\xb}}\\
\frD_{\xb}(\frD_{\xa} (M_i)) \ar[r]^{\frD_{\xb}(M_{\xa})} &\frD_{\xb}(M_j)
}
}
commutes, that is,  $M_{\xb\circ \xa} = M_{\xb}D_{\xb}(M_{\xa})$

 \item Given any object $i$ in $\Ob(\caC)$, the diagram
\eqqc{repcomm2}{\xymatrix@C=2.0cm@R1.5cm{
M_i \ar[rd]^{\id_{\frD_{e_i}}} \ar[rr]^{\id_{M_i}}
&&M_i\\
&\frD_{e_i}(M_i) \ar[ru]^{M_{e_i}} &
}
}
commutes, that is,  $\id_{M_i} =M_{e_i}\id_{\frD_{e_i}}$.
 \end{enumerate}
\end{enumerate}

A morphism $M\stackrel{\omega}{\to} M'$ between two modules of the diagram $\frD$ is  a family $\{\omega_i: M_i\to M'_i\}_{i\in \Ob(\caC)}$ of morphisms such that the diagram
\eqqc{morcomm}{\xymatrix@C=2.0cm@R1.5cm{
\frD_{\xa}(M_i) \ar[r]^{\frD_{\xa}(\omega_i)} \ar[d]^{M_{\xa}}& \frD_{\xa}(M'_i) \ar[d]^{M'_{\xa}}\\
M_j \ar[r]^{\omega_j} & M'_j
}
}
commutes for any morphism $\xa : i\to j$ in $\Mor(\caC)$.
Write $\rMod(\caC,\frD)$ for the category of the modules of the diagram $\frD$, it is an additive category which has a zero object, the object with zero in each component.

Given an $n$-properly-graded algebra $\LL$, the diagram $\dbA{n+1}{\LL}$ ($\dbfA{n+1}{\LL}$, respectively) is defined as the strict diagram assigning to each vertex $i$ the category $\rMod \LL$ of left $\LL$-modules (the $\rmod \LL$ of finitely generated left $\LL$-modules, respectively), assigning to each arrow $\xc_t$ the functor  $U \otimes_{\LL} \mbox{-} $ and assigning to the arrow the arrow $\xb$ the functor $D\LL \otimes_{\LL} \mbox{-} $.
Write $\rMod(\dbA{n+1}{\LL})$  ($\rMod(\dbfA{n+1}{\LL})$, respectively) for the module category for the diagram $\dbA{n+1}{\LL}$ ($\dbfA{n+1}{\LL}$, respectively).

By setting $M_{\xb\circ\xa} =0$ for any compasable arrows $i \stackrel{\xa}{\to} j \stackrel{\xb}{\to} k$, we get a bound module category $\rMod^b(\dbA{n+1}{\LL})$ and $\rMod^b(\dbfA{n+1}{\LL})$ of these diagrams, respectively.

Given an $n$-slice algebra $\GG$, we define the diagram $\dbA{n+1}{\GG}$ ($\dbfA{n+1}{\GG}$) to be the strict diagram assigning to each vertex $i$ the category $\rMod \GG$ of left $\GG$-modules ($\rmod \GG$ of finitely generated left $\GG$-modules), assigning to each arrow $\xc_t$ the functor  $\htU \otimes_{\GG} \mbox{-} $ and assigning to the arrow $\xb$ the functor $\tilde{U} \otimes_{\GG} \mbox{-} $ of these diagrams, respectively.

Note that $1= \sum_{r=0}^{n+1} E_{r,r}$ in $\htL$ and $ E_{r,r}\htL E_{r,r} \simeq \GG$, we regard this isomorphism as an identity.
Let $\htM$ be a $\htL$-module, write $M_r =E_{n+1-r,n+1-r} \htM$, then $M$ is a $\LL$-module, and as the $M(\LL,U,D\LL)$-module, we have $$\htM = \mat{c}{M_{n+1}\\ \vdots \\ M_1\\ M_0},$$  the operation of $M(\LL,U,D\LL)$ on $\htM$ induces morphisms $f_r:U \otimes M_{r} \to M_{r+1}$ for $r=0,\ldots,n$ satisfying $f_{r+1}f_r=0$ and $g: D\LL \otimes M_0 \to M_{n+1}$ of $\LL$-modules.
By assigning to vertex $r$ the $\LL$-module $M_r$ for $r=0,1,\ldots,n+1$, assigning to the arrow $\xc_r$ the morphism $f_r$ for $r=0,1,\ldots,n$ and  assigning to the arrow $\xb$ the morphism $g$, a module of the diagram $\dbA{n+1}{\LL}$ or $\dbfA{n+1}{\LL}$ of quiver $\wtt{\mathbf A}_{n+1}$ is obtained.
One checks directly that this induced equivalences between $\rMod \wht{\LL}$ and $\rMod^b(\dbA{n+1}{\LL})$, and between $\rmod \wht{\LL}$ and $\rMod^b(\dbfA{n+1}{\LL})$.
So we have the following theorem for the multi-layer algebra of an $n$-properly-graded algebra.

\begin{thm}\label{eqmod}Let $\LL$ be a nicely-graded $n$-properly-graded algebra.
Then the module category $\rMod^b(\dbA{n+1}{\LL})$ is equivalent to the category $\rMod \wht{\LL}$ of $\wht{\LL}$ modules and $\rMod^b(\dbfA{n+1}{\LL})$ is equivalent to  $\rmod \wht{\LL}$ of finitely generated $\wht{\LL}$-modules.
\end{thm}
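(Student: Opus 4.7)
The plan is to construct mutually inverse functors between $\rMod \wht{\LL}$ and $\rMod^b(\dbA{n+1}{\LL})$ directly from the tensor algebra presentation in Theorem \ref{multialgtensor}, and then to observe that both constructions preserve finite generation.

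First I would set up the functor $F: \rMod\wht{\LL}\to \rMod^b(\dbA{n+1}{\LL})$ as sketched in the paragraph preceding the theorem. Given $\wht{M}\in\rMod\wht{\LL}$, use the idempotents $E_{n+1-r,n+1-r}$ (arising from the $\wht{S}=\bigoplus_{t=0}^{n+1}\LL\sk{t}$ decomposition) to decompose $\wht{M}=\bigoplus_{r=0}^{n+1} M_r$, where $M_r=E_{n+1-r,n+1-r}\wht{M}$ is naturally a left $\LL$-module via the identification $E_{n+1-r,n+1-r}\wht{\LL}E_{n+1-r,n+1-r}\cong\LL$. The action of the generators $\gamma_v\in U\sk{r}$ and $\beta_p\in D\LL\sk{0}$ yields $\LL$-linear maps $f_r:U\otimes_{\LL}M_r\to M_{r+1}$ and $g:D\LL\otimes_{\LL}M_0\to M_{n+1}$. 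I would then assign to the vertex $r$ of $\widetilde{\mathbf{A}}_{n+1}$ the $\LL$-module $M_r$, to the arrow $\xc_r$ the morphism $f_r$, and to the arrow $\xb$ the morphism $g$. Morphisms of $\wht{\LL}$-modules are sent to their component-wise restrictions.

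Next I would build the inverse functor $G$. Given a diagram-module $(M_r,f_r,g)$, set $\wht{M}=\bigoplus_{r=0}^{n+1} M_r$ and define the $\wht{\LL}$-action by the matrix formula: on the diagonal the ring $\LL\sk{r}$ acts on $M_r$ in the usual way; on the off-diagonal entry $U\sk{r}$ it acts via $f_r$; and on $D\LL\sk{0}$ it acts via $g$. Well-definedness is exactly the statement that the relation $\wht{V}\otimes_{\wht{S}}\wht{V}=0$ coming from Theorem \ref{multialgtensor} is respected, which is precisely the "bound" condition $M_{\xb\circ\xa}=0$ imposed in $\rMod^b$ (any two composable non-identity arrows in $\widetilde{\mathbf{A}}_{n+1}$ pass through some $\xc_{r+1}\xc_r$ or $\xc_0\xb$ etc., all of which sit inside $\wht{V}\otimes_{\wht{S}}\wht{V}$).

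The last step is to check $FG=\id$ and $GF=\id$ on objects and on morphisms; both are straightforward matrix bookkeeping once one unwinds the idempotent decomposition, and the equivariance of a morphism in \eqref{morcomm} is precisely $\wht{\LL}$-linearity on the off-diagonal entries. The main obstacle, such as it is, is verifying that the bound relation in $\rMod^b$ matches exactly the ideal $(\wht{V}\otimes_{\wht{S}}\wht{V})$: one has to check that every bound path of length $\ge 2$ in $\wht{Q}$ sits in this ideal, which follows from Theorem \ref{multialgtensor} together with the observation from Lemma \ref{zzvttq:arrows} that all arrows of $\wht{Q}$ belong to $\wht{V}$. Finally, since $M_r = E_{n+1-r,n+1-r}\wht{M}$ is finitely generated over $\LL$ whenever $\wht{M}$ is finitely generated over $\wht{\LL}$, and conversely $\wht{M}=\bigoplus M_r$ is a finite direct sum, the functors restrict to an equivalence between $\rmod\wht{\LL}$ and $\rMod^b(\dbfA{n+1}{\LL})$.
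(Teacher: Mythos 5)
Your proposal is correct and follows essentially the same route as the paper: the paper's (very terse) argument is exactly the idempotent decomposition $M_r=E_{n+1-r,n+1-r}\htM$ with the induced maps $f_r$ and $g$ assigned to the arrows $\xc_r$ and $\xb$, followed by the remark that one checks directly that this gives the equivalences; your added details (the explicit inverse functor, the identification of the bound condition $M_{\xb\circ\xa}=0$ with the ideal $(\wht{V}\otimes_{\wht{S}}\wht{V})$, and the finite-generation check) fill in precisely what the paper leaves to the reader. One tiny quibble: since $\xb:0\to n+1$ with $0$ a source and $n+1$ a sink, the only composable non-identity pairs are the $\xc_{r+1}\circ\xc_r$, so the verification is even simpler than your parenthetical "or $\xc_0\xb$ etc." suggests.
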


Similarly, we have the following theorem for the multi-layer algebra of an $n$-slice algebra.

\begin{thm}\label{eqmods}Let $\GG$ be a nicely-graded  $n$-slice algebra.
Then the module category $\rMod(\dbA{n+1}{\GG})$ is equivalent to the category $\rMod \wht{\GG}$ of $\wht{\GG}$-modules and $\rMod(\dbfA{n+1}{\GG})$ is equivalent to  $\rmod \wht{\GG}$ of finitely generated $\wht{\GG}$-modules.
\end{thm}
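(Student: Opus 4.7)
The plan is to follow the template of Theorem~\ref{eqmod}, using the matrix description $\wht{\GG}=M^{*}(\GG,\htU,\tilde{U})$ in \eqref{algg} to exhibit an explicit pair of mutually inverse functors between $\rMod\wht{\GG}$ and $\rMod(\dbA{n+1}{\GG})$, and then to restrict to finitely generated modules in each diagonal component to obtain the second equivalence.

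Given a left $\wht{\GG}$-module $\wht{N}$, I would decompose $1=\sum_{r=0}^{n+1}E_{r,r}$ via the diagonal matrix idempotents and write $N_{r}=E_{n+1-r,n+1-r}\wht{N}$, a left $\GG$-module through $E_{r,r}\wht{\GG}E_{r,r}\iso\GG$. The subdiagonal blocks of the matrix action yield $\GG$-linear maps $f_{r}:\htU\otimes_{\GG}N_{r}\to N_{r+1}$ for $r=0,\ldots,n$, while the $(n+1,0)$-block, whose entry is $\tilde{U}\oplus\htU^{\otimes_{\GG}(n+1)}$, splits into a pair: a map $g:\tilde{U}\otimes_{\GG}N_{0}\to N_{n+1}$ and a map $\htU^{\otimes_{\GG}(n+1)}\otimes_{\GG}N_{0}\to N_{n+1}$. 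Associativity of the $\wht{\GG}$-action, combined with the natural multiplications $\htU^{\otimes_{\GG}j}\otimes_{\GG}\htU^{\otimes_{\GG}i}\to\htU^{\otimes_{\GG}(i+j)}$ that are built into the matrix, forces every lower-triangular block to coincide with the iterated composite of the $f_{r}$'s; in particular the $\htU^{\otimes_{\GG}(n+1)}$-summand acts as $f_{n}\circ\cdots\circ f_{0}$ and carries no data beyond what the $f_{r}$'s already record. I would then assign $r\mapsto N_{r}$, $\xc_{r}\mapsto f_{r}$, $\xb\mapsto g$ to produce a module of the diagram $\dbA{n+1}{\GG}$ on the free category $\widetilde{\mathbf A}_{n+1}$: the commutativity axiom \eqref{repcomm} applied to the unique composable chain $\xc_{0},\ldots,\xc_{n}$ is exactly this identification of the iterated composite with the $\htU^{\otimes_{\GG}(n+1)}$-part of the corner action, while no axiom couples $g$ to the $f_{r}$'s since $\xb$ is not a composite in $\widetilde{\mathbf A}_{n+1}$.

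In the opposite direction I would take a diagram module with data $(\{N_{r}\},\{f_{r}\},g)$ and assemble $\wht{N}=\bigoplus_{r}N_{r}$ equipped with the matrix-algebra action dictated by the $f_{r}$'s, by $g$, and by their iterated composites on the lower-triangular blocks. This construction is manifestly functorial and inverse to the former one. Morphisms correspond precisely: a $\wht{\GG}$-linear map decomposes as a family $\{\omega_{r}:N_{r}\to N'_{r}\}$ intertwining the $f_{r}$'s and $g$, which is the diagram-morphism axiom \eqref{morcomm}. Since both functors are componentwise, they restrict to an equivalence between $\rmod\wht{\GG}$ and $\rMod(\dbfA{n+1}{\GG})$, giving the second assertion.

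The crux, and the point where the argument diverges from the proof of Theorem~\ref{eqmod} (where the single bottom-left entry $D\LL$ and the relations $\gamma\gamma=0$ impose the bounded condition $f_{r+1}f_{r}=0$), is the composite summand $\htU^{\otimes_{\GG}(n+1)}$ sitting alongside $\tilde{U}$ in the $(n+1,0)$-entry of $\wht{\GG}$: one must verify that these two summands supply genuinely independent pieces of module data, so that $g$ and the composite $f_{n}\circ\cdots\circ f_{0}$ are freely specifiable without any hidden relation in $\hat{\rho}^{\perp}$ forcing compatibility between them. The tensor algebra presentation $\wht{\GG}\iso T_{\wht{S}^{*}}\wht{V}^{*}$ of Theorem~\ref{multidualalgtensor}, with $\wht{V}^{*}=\bigoplus_{t=0}^{n}\htU\sk{t}\oplus\tilde{U}\sk{0}$ a direct sum of $\wht{S}^{*}$-bimodules, settles this cleanly: modules of a tensor algebra over a semisimple base are given freely by modules of the base together with independent actions of each bimodule summand of $\wht{V}^{*}$, which is exactly the data of a module of $\dbA{n+1}{\GG}$.
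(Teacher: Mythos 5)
Your proposal is correct and follows essentially the same route as the paper, which proves Theorem~\ref{eqmods} only by the word ``similarly'' after the explicit idempotent-decomposition argument for Theorem~\ref{eqmod}; you carry out that decomposition for $M^{*}(\GG,\htU,\tilde{U})$ and correctly identify the one genuine difference, namely that the absence of the nilpotency relations on the $\gamma$'s means the lower-triangular blocks $\htU^{\otimes_{\GG}j}$ act by the forced composites of the $f_{r}$'s while $\tilde{U}$ in the corner supplies an independent $g$, yielding the unbounded diagram module category rather than the bounded one. Your closing appeal to the tensor-algebra presentation $\wht{\GG}\iso T_{\wht{S}^{*}}\wht{V}^{*}$ of Theorem~\ref{multidualalgtensor} is a clean way to certify that independence and is consistent with, though slightly more than, what the paper records.
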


It is interesting to study the representation theory using the category of the modules of the diagram.
The modules of the diagram is formed by a sequence $(M_{0},\ldots, M_{n+1}, f_0,\cdots,f_n,g)$ of $\LL$(or $\GG$)-modules $M_{0},\ldots, M_{n+1}$ and morphisms $f_0,\cdots,f_n, g$, where $f_t: M_{t+1} \to M_t$ are $\LL$(or $\GG$)-morphisms satisfying certain conditions.

For the case of $n$-properly-graded algebra, such sequences can either be regarded as pairs $(M^{\bullet}, g)_{[0,n+1]}$ of certain complex $M^{\bullet}$ of length $n+2$ concentrated in the interval $[0,n+1]$ in the category of complexes $\mathcal C(\LL)$ of $\LL$-modules with a map $g$ connect its end term, or as pairs of representation $(\tilde{M},g)$ of the the quiver $\mathbf A_{n+1}$ of type $ A_{n+1}$ with linear order over the algebra $\LL$ with a $\LL$-map from the last one to the first one.
The representation theory of $\htL$ and $\htG$ are related to the morphism categories $\mathrm{Mor}_n(\LL)$ and $\mathrm{Mor}_n(\GG)$ (see \cite{zp11,gk22}).

We leave further study of the representation theory of multi-layer quivers and algebras in the future.

\section{Higher slice algebra of infinite type\label{inftype}}

When we start with an $n$-slice algebra $\GG$ with a nice $n$-slice quiver $Q^{\perp}$, we obtain an $n$-nicely-graded quiver $Q$ with a $n$-properly-graded algebra $\LL=\GG^{!,op}$  whose trivial extension $\tL$ is an $n$-translation algebra.
Then we construct the multi-layer quiver for $Q$ to get an $(n+1)$-nicely-graded quiver $\htQ$, and $(n+1)$-slice $\htQ^{\perp}$ and the multi-layer algebra $\wht{\GG}$ defined by $\htQ^{\perp}$.

For an $n$-slice algebra $\GG$ of finite type, the trivial extension of $\wht{\GG^{!,op}}$ is an $(n+1)$-properly-graded algebra, the following example shows that it may not be an $(n+1)$-translation algebra.
So the multi-layer algebra of an $n$-slice algebra of finite type may not be an $(n+1)$-slice algebra of finite type.

\begin{exa}\label{exa:noslicealg}
{\em
Start with the quiver
$$\xymatrix@C=1.0cm@R0.5cm{
&Q&
&\stackrel{(1,0)}{\circ}\ar[rd]&\\
&&\stackrel{(3,0)}{\circ}\ar[rr]&&\stackrel{(2,1)}{\circ}
}
$$
of type $A_3$.
We have $Q$ is a nicely-graded $1$-properly-graded quiver and  $Q^{\perp}=Q$ is a nicely-graded $1$-slice  quiver.

Let $\LL$ be the algebra defined by $Q$, then $\LL$ is a $1$-properly-graded algebra, its trivial extension $\tL$ an $1$-translation algebra by \cite{bbk02}.
So $\GG=\LL^{!,op}=\LL$ is an $1$-slice algebra.
Construct its multi-layer quiver $\htQ$ and its returning arrow quiver $\wtt{\htQ}$ as follow.
$$\xymatrix@C=0.3cm@R0.3cm{
&\htQ&
\\
&\stackrel{(1,0,2)}{\circ}\ar@/^/[rd]&\\
\stackrel{(3,0,2)}{\circ}\ar@/_/[rr]& &\stackrel{(2,1,3)}{\circ}\\
&\stackrel{(1,0,1)}{\circ}\ar@/^/[rd]\ar@[purple]@/^/[uu]&\\
\stackrel{(3,0,1)}{\circ}\ar@/_/[rr]\ar@[purple]@/^/[uu]& &\stackrel{(2,1,2)}{\circ}\ar@[purple]@/^/[uu]\\
&\stackrel{(1,0,0)}{\circ}\ar@/^/[rd]\ar@[purple]@/^/[uu]&\\
\stackrel{(3,0,0)}{\circ}\ar@/_/[rr] \ar@[purple]@/^/[uu]& &\stackrel{(2,1,1)}{\circ} \ar@[purple]@/^/[uu] \ar@[green]@/^/[luuuuu]\ar@[green]@/^/[lluuuu]\\
\\
}
%$$
\xymatrix@C=0.5cm@R0.5cm{\ar@[white]&}
%$$
\xymatrix@C=0.3cm@R0.3cm{
&\wtt{\htQ}&
\\
&\stackrel{(1,0,2)}{\circ}\ar@/^/[rd] \ar@[purple]@/^/[dddd]&\\
\stackrel{(3,0,2)}{\circ}\ar@/_/[rr] \ar@[purple]@/^/[dddd]& &\stackrel{(2,1,3)}{\circ} \ar@[purple]@/^/[dddd] \ar@[green]@/_/[ddll]\ar@[green]@/^/[dl]\\
&\stackrel{(1,0,1)}{\circ}\ar@/^/[rd]\ar@[purple]@/^/[uu]&\\
\stackrel{(3,0,1)}{\circ}\ar@/_/[rr]\ar@[purple]@/^/[uu]& &\stackrel{(2,1,2)}{\circ} \ar@[purple]@/^/[uu] \ar@[green]@/_/[ddll]\ar@[green]@/^/[dl]\\
&\stackrel{(1,0,0)}{\circ}\ar@/^/[rd]\ar@[purple]@/^/[uu]&\\
\stackrel{(3,0,0)}{\circ}\ar@/_/[rr] \ar@[purple]@/^/[uu]& &\stackrel{(2,1,1)}{\circ} \ar@[purple]@/^/[uu] \ar@[green]@/^/[lluuuu]\ar@[green]@/^/[luuuuu]\\
\\
}
$$

$\htQ$ is a $2$-properly-graded quiver, and $\wtt{\htQ}$ is a stable $2$-translation quiver.

By indexing the vertices as $(i,t,r)$ with  $r\in \zZ/3\zZ$, the arrows are written as
\eqqcn{ex:arrow}{\arr{lcl}{\xa_{i,t,r}:& (i,t,r)\to (2,t+1,r+1)& i\neq 2\\ \xb_{j,t,r}: & (2,t,r) \to (j,t-1,r+1) &j=1,3\\
\xc_{i,t,r}:& (i,t,r) \to (i,t,r+1)&
}}
and we write them in short as $\xa,\xb_1,\xb_3,\xc$ for short.
The relations become
\eqqcn{eq:relations}{\{\xc\xc \}\cup\{\xc\xa-\xa\xc,\xc\xb_1-\xb_1\xc,\xc\xb_3-\xb_3\xc\}.}

Let $\wht{\LL}$ be the algebra defined by the $2$-properly-graded quiver $\htQ$ and let $\wtt{\wht{\LL}}$ be the algebra defined $\wtt{\htQ}$.
$\wht{\LL}$ is a $2$-properly-graded algebra and  $\wtt{\wht{\LL}}$ is its trivial extension.
The indecomposable projective-injective $\wtt{\wht{\LL}}$-modules look like
\eqqcn{p100}{
\xymatrix@C=0.2cm@R0.6cm{ &P({1,0,r})\\
\stackrel{(1,0,r)}{\circ}\ar@[purple][d] \ar[rd]&\\
\stackrel{(1,0,r+1)}{\circ}\ar[rd]& \stackrel{(2,1,r+1)}{\circ}\ar@[green][ld] \ar@[purple][d] \\
\stackrel{(1,0,r+2)}{\circ}\ar@[purple][d]& \stackrel{(2,1,r+2)}{\circ}\ar@[green][ld]\\
\stackrel{(1,0,r)}{\circ}&\\
}
\xymatrix@C=0.2cm@R0.6cm{ &P({2,1,r})\\
&\stackrel{(2,1,r)}{\circ} \ar@[green][rd] \ar@[green][ld] \ar@[purple][d]&\\
\stackrel{(1,0,r+1)}{\circ}\ar@[purple][d] \ar[rd] & \stackrel{(2,1,r+1)}{\circ} \ar@[green][ld] \ar@[green][rd] & \stackrel{(3,0,r+1)}{\circ} \ar@[purple][d] \ar[ld]\\
\stackrel{(1,0,r+2)}{\circ} \ar[rd] & \stackrel{(2,1,r+2)}{\circ} \ar@[purple][d] & \stackrel{(3,0,r+2)}{\circ} \ar[ld]\\
&\stackrel{(2,1,r)}{\circ} &\\
}
\xymatrix@C=0.2cm@R0.6cm{ &P({3,0,r})\\
&&\stackrel{(3,0,r)}{\circ}\ar@[purple][d] \ar[ld]&\\
&\stackrel{(2,1,r+1)}{\circ}\ar@[green][rd] \ar@[purple][d]& \stackrel{(3,0,r+1)}{\circ}\ar[ld]\\
&\stackrel{(2,1,r+2)}{\circ}\ar@[green][rd]& \stackrel{(3,0,r+2)}{\circ}\ar@[purple][d]\\
&&\stackrel{(3,0,r)}{\circ}\\
}
}

Consider the projective resolution
\eqqc{prresS100}{\cdots \slrw{f^2} P^1\slrw{f^1} P^0 \lrw S(1,0,0)\lrw 0 }
of simple $S=S(1,0,0)$ corresponding to the vertex $(1,0,0)$.
Note that $\om{}S$ is generated as submodule by $\xc e_{1,0,0}$ and $\xa e_{1,0,0}$ in degree $1$, so $P^1= P(1,0,1)\oplus P(2,1,1)$ with $f(e_{1,0,1}) =\xc e_{1,0,0}$ and $f(e_{2,1,1}) =\xa e_{1,0,0}$.

We have $f(\xc e_{1,0,1}) =0$, $f(\xa e_{1,0,1}) =f(\xc e_{2,1,1})$ and $f(\xb_3 e_{2,1,1}) =0$.
So the element $\xc e_{1,0,1}, \xa e_{1,0,1}- \xc e_{2,1,1}, \xb_3 e_{2,1,1}$ are in the kernel of $f_2$,  they are elements of degree $2$ and generate $ \om{2}S$.

Now $P^2= P(1,0,2)\oplus P(2,1,2)\oplus P(3,0,2)$ with $f(e_{1,0,2}) =\xc e_{1,0,1}$, $f(e_{2,1,2}) =\xa e_{1,0,1}-\xc e_{2,1,1}$ and $f(e_{3,0,2}) =\xb_3 e_{2,1,1}$.
Then $f(\xc e_{1,0,2}) =0$, $$f(\xa e_{1,0,2}) = \xa\xc e_{1,0,1} = \xc\xa e_{1,0,1} =\xc \xa e_{1,0,1}-\xc e_{2,1,1} = f(\xc e_{2,1,2})$$ and $f(\xc e_{3,0,2}) =0$.
So $\xc e_{1,0,2},\xa e_{1,0,2}-\xc e_{2,1,2},\xc e_{3,0,2}$ are linearly independent degree $3$ elements in $\ker f_2$.
Note that $e_{3,0,1} \om{2}S=0$, thus $\xc\xb_3 e_{2,1,2}, \xb_3\xa e_{3,0,2}$ are both degree $4$ elements in $\ker f_3$.
It is easy to see that $\xb_3\xa e_{3,0,2}$ is not in the submodule generated by the elements $\xc e_{1,0,2},\xa e_{1,0,2}-\xc e_{2,1,2},\xc e_{3,0,2}$.
This shows that $\om{3}S$ has at least one generator of degree $4$ and $3$ generators of degree $3$.

Thus the resolution \eqref{prresS100} is not linear at $P^3$ and $\om{3}S$ is not semisimple.
So $\wtt{\wht{\LL}}$ is not almost Koszul, and $\wht{\LL}^{!,op}$ is not a $2$-slice algebra.
}\end{exa}

This example shows that the multi-layer algebra of an $n$-slice algebra of finite type may not be an $n$-slice algebra.
It is interesting to know if such algebra is $(n+1)$-hereditary?

\medskip

The following theorem indicating that the multi-layer algebra of an $n$-slice algebra of infinite type is an $(n+1)$-slice algebra of infinite type.

\begin{thm}\label{alg:inf}
Let $\GG$ be a nicely-graded $n$-slice algebra of infinite type, then $\wht{\GG}$ is a nicely-graded $(n+1)$-slice algebra of infinite type.
\end{thm}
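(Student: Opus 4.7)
The plan is to verify the three defining properties of a nicely-graded $(n+1)$-slice algebra of infinite type: (i) $\wht{\GG}$ is nicely-graded; (ii) $\wht{\GG}$ is an $(n+1)$-slice algebra, i.e., $\Delta\wht{\LL}$ is an $(n+1)$-translation algebra; and (iii) the higher preprojective algebra $\Pi(\wht{\GG})$ is infinite-dimensional. I would treat these three items in order, harvesting as much as possible from Sections \ref{mulq}--\ref{algebras}, and isolating exactly where the infinite-type hypothesis on $\GG$ is used.

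First, since $\GG$ is a nicely-graded $n$-slice algebra, $\LL=\GG^{!,op}$ is a nicely-graded $n$-properly-graded algebra and $\ttL=\Delta\tL$ is an $(n+1)$-translation algebra (in particular quadratic). Applying Theorem \ref{tower:np1} (together with Proposition \ref{quiver}) shows $\htQ$ is an $(n+1)$-nicely-graded quiver, and Proposition \ref{qdr} supplies the quadraticity needed to form $\wht{\GG}=\wht{\LL}^{!,op}=k\htQ/(\widehat{\rho}^{\perp})$, as recorded in Theorem \ref{qdual}. This handles (i) and the matrix/tensor descriptions of $\wht{\GG}$.

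Second, for (ii) I would appeal to Theorem \ref{tower:slicevv}: $\htQ$ is a complete $\ttv$-slice in $\zZ_\vv\ttQ$, and by Proposition \ref{0nap:extendible2} the smash product $\ttL\#' k\zZ^*$ is the graded self-injective algebra attached to the stable $(n+1)$-translation quiver $\zZ_\vv\ttQ$. Thus $\wht{\LL}$ is a $\ttv$-slice algebra of a stable $(n+1)$-translation quiver. The aim is to identify the bound quiver $\wtt{\htQ}$ of $\Delta\wht{\LL}$ with a stable $(n+1)$-translation quiver, whence $\Delta\wht{\LL}$ is an $(n+1)$-translation algebra by \cite{g16}. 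Concretely, I would read off the missing returning arrows of $\wtt{\htQ}$ from the $\ttv$-hammocks $H_{\ttv}^{(i,t,s)}$ described in Lemma \ref{tower:hammock}, and verify that each maximal bound path in $\wtt{\htQ}$ has length $n+2$ with Nakayama permutation induced by $\ttv$. This is the step where the infinite-type hypothesis enters: the analysis needs the $\ttv$-hammocks inside $\zZ_\vv\ttQ$ to be ``full,'' so that the projective resolutions of simples over $\Delta\wht{\LL}$ are linear at each step up to degree $n+2$. In the finite-type situation the hammocks get truncated and linearity fails at some intermediate step, exactly as witnessed by the non-linearity of the resolution of $S(1,0,0)$ at $P^3$ in Example \ref{exa:noslicealg}; the infinite-type hypothesis is precisely what rules this out.

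Third, for (iii), I would use the descriptions of $\wht{\GG}$ in Theorems \ref{qdual} and \ref{multidualalgtensor}. The diagonal idempotent decomposition $1=\sum_{r}E_{r,r}$ with $E_{r,r}\wht{\GG}E_{r,r}\cong\GG$ realizes $\GG$ as a subquotient of $\wht{\GG}$; tensoring with $\htU$ and $\tilde U$ and passing to $\Ext^{n+1}_{\wht{\GG}}(D\wht{\GG},\wht{\GG})$ produces a subalgebra of $\Pi(\wht{\GG})$ that surjects onto (or embeds) $\Pi(\GG)$. Since $\Pi(\GG)$ is infinite-dimensional by the infinite-type hypothesis, $\Pi(\wht{\GG})$ must be infinite-dimensional, so $\wht{\GG}$ is of infinite type.

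The main obstacle is step (ii): proving that $\wtt{\htQ}$ is a genuine stable $(n+1)$-translation quiver, or equivalently that $\Delta\wht{\LL}$ is almost Koszul of the required parameter. This is a combinatorial/homological calculation that hinges delicately on the infinite-type hypothesis, since the natural ``finite-type'' analogue is false (cf.\ Example \ref{exa:noslicealg}). I expect this to require a careful bookkeeping of bound paths that traverse the returning arrows $\beta_p$ and the loops $\gamma_{(i,t)}$, matching them with the $\ttv$-hammocks of $\zZ_\vv\ttQ$ computed in Lemma \ref{tower:hammock}, and then transferring the resulting linearity of syzygies from $\ttL\#' k\zZ^*$ back to $\Delta\wht{\LL}$ via the $\tau_\vv$-slice correspondence of Proposition \ref{znq}.
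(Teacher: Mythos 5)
Your outline of parts (i) and (iii) is broadly consistent with what is needed, but the heart of the theorem is your step (ii), and there you have only described a plan, not a proof. You write that establishing that $\wtt{\htQ}$ is a genuine stable $(n+1)$-translation quiver (equivalently, that $\Delta\wht{\LL}$ is almost Koszul of the right parameter) ``will require a careful bookkeeping of bound paths'' matched against the $\ttv$-hammocks of Lemma \ref{tower:hammock}. That bookkeeping is precisely the content you would have to supply, and it is exactly the point where Example \ref{exa:noslicealg} shows a naive argument fails; identifying where the infinite-type hypothesis ``should'' enter is not the same as using it. The paper avoids the direct resolution computation entirely by exploiting that infinite type means the relevant algebras are honestly Koszul (not merely $(p,q)$-Koszul for finite $p$): $\dtL$ is a Koszul self-injective algebra, so by Corollary 3.3 of \cite{gyz14} its trivial extension $\Delta\dtL=\ttL$ is again Koszul self-injective of Loewy length one higher, and by Theorem 5.3 of \cite{g16} the smash product $(\Delta\dtL)\#k\mathbb{Z}$ is a Koszul $(n+1)$-translation algebra with bound quiver $\zzv\ttQ$. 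The decisive step is then the identification $\zzs{n}\htQ\simeq\zzv\ttQ$, which identifies $\wtt{\htL}\#k\mathbb{Z}$ with this Koszul algebra and lets one descend (again via Theorem 5.3 of \cite{g16}) to conclude that $\wtt{\htL}$ itself is a Koszul $(n+1)$-translation algebra. This chain of transfer results is the missing idea in your proposal; without it, or without actually carrying out the linearity analysis you sketch, step (ii) is not established.

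Your step (iii) also diverges from the paper and is not fully justified as stated: you propose to embed $\Pi(\GG)$ into $\Pi(\wht{\GG})$ via the corner idempotents $E_{r,r}$, but it is not automatic that the higher preprojective algebra of a corner subalgebra of a triangular matrix algebra embeds into (or is a quotient of) the higher preprojective algebra of the whole algebra, and you give no argument. The paper instead gets infinite type for free from the same Koszulity chain: once $\wtt{\htL}$ is a Koszul $(n+1)$-translation algebra, Theorem 6.6 of \cite{g20} together with Propositions 2.2.1 and 2.9.1 of \cite{bgs96} shows the $(n+2)$-preprojective algebra of $\htG$ is Koszul, which is the definition of infinite type here. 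I would recommend reorganizing your argument around the Koszul duality/trivial extension/smash product square rather than around hammocks and resolutions.
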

\begin{proof}
Let $\LL =\GG^{!,op}$ be the quadratic dual of $\GG$.
Let $Q^{\perp}$ be the bound quiver of $\GG$, and  $Q$ be the bound quiver of $\LL$, then the trivial extension $\dtL$ is a Koszul stable $n$-translation algebra, that is, a Koszul self-injective  algebra of Loewy length $n+1$.
By Corollary 3.3 of \cite{gyz14}, the trivial extension $\Delta \dtL$ of $\dtL$ is a Koszul self-injective  algebra of Loewy length $n+2$, or, a Koszul $(n+1)$-translation algebra.
By the proof of Theorem 5.3 of \cite{g16}, $(\Delta \dtL)\# k \mathbb Z$ is also Koszul $(n+1)$-translation algebra.

In this case, $Q^{\perp}$ is a nice $n$-slice and $Q$ is nice $n$-properly-graded quiver.
The bound quiver of $\dtL$ is the returning arrow quiver $\tQ$, and the bound quiver of $\Delta \dtL$ is the returning arrow quiver $\ttQ$ of $\tQ$.
But we have that the bound quiver of $(\Delta \dtL)\# k \mathbb Z$ is $\zzv\ttQ$.
Note that $\htQ $ is a complete $\tau$-slice in $\zzs{n} \htQ$ and we have an isomorphism $\zzs{n} {\htQ} \simeq \zzv\ttQ$.
$\zzs{n} {\htQ}$ is the bound quiver of $\wtt{\htL}\# k \mathbb Z $.
This implies that $\wtt{\htL}\# k \mathbb Z$ is a Koszul $(n+1)$-translation algebra, so $\wtt{\htL}$ is also a Koszul $(n+1)$-translation algebra by Theorem 5.3 of \cite{g16}.
By Theorem \ref{tower:np1}, $\htQ^{\perp}$ is a nicely-graded $(n+1)$-slice, and by Theorem 6.6 of \cite{g20} and Proposition 2.2.1 and 2.9.1 of \cite{bgs96}, the $(n+2)$-preprojective algebra of $\htG$ is also Koszul, so $\wht{\GG}$ is an $(n+1)$-slice algebra of infinite type.
\end{proof}

We remark that in this case, Theorem \ref{tower:hammock} describes the $(n+1)$-almost split sequences in the $(n+1)$-preprojective and $(n+1)$-preinjective components of $\htG$, by Theorem 5.6 of \cite{gll19b}.

\medskip

The Picture \eqref{depictingqui} in Section \ref{mqtslice} also applied for the algebras $\LL$, $\tL$, $\ttL$, $\ttL\#\mathbb Z^*$  and $\htL$, and for their quadratic duals  $\GG$, $\tG$, $\ttG$, $(\ttL\#\mathbb Z^*)^{!,op}$ and $\htG$.

Start with a $n$-slice algebra $\GG$ of infinite type $\GG$, we have the following analog of Theorem \ref{tower:pgquiver}.
The algebra $\GG(t)$ constructed in this way is an $(n+1)$-slice algebra of infinite type by Theorem \ref{alg:inf}.
By this way, we get a tower of the higher slice algebras of infinite type as presented below.
The last assertion follows from Theorem 5.10 of \cite{gll19b}.

\begin{thm} \label{tower:slicealgebra}
Let  $\GG$ be a nice  $n$-slice algebra of infinite type with bound quiver $Q^{\perp}(0)= Q^{\perp}$, then there is a tower $\GG(0)=\GG, \GG(1), \cdots, \GG(t), \cdots$ of nicely-graded higher slice algebras such that for each $t$,

1. $\GG(t+1) = \htG(t)$ is obtained by multi-quiver construction from $\GG(t)$ for all $t$.

2. $\GG(t)$ is a nice $(n+t)$-slice algebra of infinite type with bound quiver $Q^{\perp}(t) =\htQ(t-1)^{\perp}$.

3. $\GG(t)$ is both a subalgebra and also a factor algebra of $\GG(t+1)$ for all $t$.

4. The category $\rMod \GG(t)$ of $\GG(t)$-modules is equivalent to the module category $\rMod(\dbA{n+t+1}{\GG(t)})$  of the diagram $\dbA{n+t+1}{\GG(t)}$, and the module category $\rmod \GG(t)$ of finitely generated $\GG(t)$-modules is equivalent to the module category $\rMod(\dbfA{n+t+1}{\GG(t)})$ of the diagram $\dbfA{n+t+1}{\GG(t)}$.

5. The Auslander-Reiten quivers of the $(n+t)$-preprojective component and of the $(n+t)$-preprojective component of $\GG(t)$ are truncations of $\zzs{n+t-1} Q(t)^{op,\perp}$.
\end{thm}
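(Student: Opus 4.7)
The plan is to proceed by induction on $t$, with the base case $t=0$ being $\GG(0)=\GG$ with its given bound quiver $Q^{\perp}(0)=Q^{\perp}$, nice and $n$-slice of infinite type by hypothesis. For the inductive step, I take $\LL(t)=\GG(t)^{!,op}$ with bound quiver $Q(t)$, form the multi-layer quiver $\wht{Q(t)}$ as in Section \ref{mulq}, and define $\GG(t+1):=\wht{\GG(t)}$ with bound quiver $Q^{\perp}(t+1):=\wht{Q(t)}^{\perp}$. This delivers parts (1) and the quiver identification in (2) tautologically.

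To complete part (2), I apply Theorem \ref{alg:inf} at each stage: since by the inductive hypothesis $\GG(t)$ is a nice $(n+t)$-slice algebra of infinite type, Theorem \ref{alg:inf} guarantees that $\GG(t+1)=\wht{\GG(t)}$ is an $(n+t+1)$-slice algebra of infinite type, while Proposition \ref{quiver} (equivalently, the complete $\ttv$-slice description of Theorem \ref{tower:slicevv} combined with Theorem \ref{tower:np1}) shows that $\wht{Q(t)}$ is $(n+t+1)$-nicely-graded, so $\GG(t+1)$ is \emph{nice}. Part (3) follows from the matrix description of Theorem \ref{qdual}: the diagonal of $M^{*}(\GG(t),\htU,\tilde{U})$ consists of $n+t+2$ copies of $\GG(t)$, so $\GG(t)$ embeds as a corner subalgebra via any single diagonal idempotent $E_{r,r}$, and it appears as a quotient by the two-sided ideal generated by the off-diagonal bimodule entries; alternatively one may invoke Proposition 3.1 of \cite{gx21} for the graded self-injective algebra $\wtt{\htL(t)}$, whose $\tau$-slice algebra recovers $\LL(t)$.

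Part (4) is an immediate application of Theorem \ref{eqmods} at stage $t$, since the bound quiver of $\wht{\GG(t)}$ is of the multi-layer shape required by that theorem and the diagonal category is $\rMod\GG(t)$ (respectively $\rmod\GG(t)$). Part (5) is the translation, via Theorem 5.10 of \cite{gll19b}, of higher Auslander-Reiten theory to the stable $(n+t)$-translation quiver $\zzs{n+t-1}Q(t)^{op,\perp}$; the induction hypothesis that $Q(t)$ is $(n+t)$-nicely-graded is precisely the input that theorem requires, and the $(n+t)$-preprojective and $(n+t)$-preinjective components are then truncations of that infinite translation quiver.

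The only step that is not a direct citation is confirming that \emph{both} the nicely-graded property and infinite type survive every step of the iteration simultaneously; this is the content of the combined application of Theorem \ref{alg:inf} and Proposition \ref{quiver}, so the whole theorem is essentially the bookkeeping of results already proved in Sections \ref{mulq}--\ref{algebras} plus Theorem \ref{alg:inf}. I would expect the only delicate point in writing it up to be making sure the recursion on the index shift $n\leadsto n+t$ is carried uniformly across all five assertions.
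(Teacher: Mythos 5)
Your proposal is correct and follows essentially the same route as the paper, which itself proves this theorem only by assembling citations: the iteration of the multi-layer construction, Theorem \ref{alg:inf} for preservation of the nice slice property and infinite type at each stage, the sub/quotient statement via the triangular matrix description (equivalently Proposition 3.1 of \cite{gx21}), Theorem \ref{eqmods} for the diagram module categories, and Theorem 5.10 of \cite{gll19b} for the preprojective and preinjective components. Your write-up is in fact more explicit about the induction and the index shift $n\leadsto n+t$ than the paper's own two-sentence justification.
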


Recall that a $n$-slice algebra of infinite type is an $n$-representation-infinite algebra, its natural to ask if we have a similar construction got higher representation-infinite algebras in general.

\medskip

By Theorem 2.10 of \cite{hio12}, the tensor product of higher representation-infinite algebras is a higher representation-infinite algebra.
But this is not the case for higher slice algebras of infinite type, as is shown in the following example.
\begin{exa}\label{exa:tensornoslice}{\em
Let $\GG$ be the $1$-slice algebra defined by the quiver
\eqqcn{}{\xymatrix@C=0.2cm@R0.5cm{
1\ar@/^/[rr]^{\xa}\ar@/_/[rr]_{\xb} && 2
},}
let $\check{\GG} = \GG \otimes \GG$ be the tensor algebra of $\GG$ with $\GG$.
$\GG$ is representation infinite hereditary algebra, hence it is $1$-representation-infinite.
Then by Theorem 2.10 of \cite{hio12}, $\check{\GG}$ is $2$-representation-infinite.

Note $\GG$ has a basis $ \{e_1,e_2,\xa,\xb\}$, $\check{\GG}$ has a basis , $ \{e_1\otimes e_1, e_1\otimes e_2 ,e_2\otimes e_1, e_2\otimes e_2 ,e_1\otimes \xa,e_1\otimes \xb ,e_2\otimes \xa,e_2\otimes \xb, \xa \otimes e_1, \xb \otimes e_1, \xa \otimes e_2, \xb \otimes e_2 ,\xa\otimes \xa, \xb\otimes \xb, \xa\otimes \xb, \xb\otimes \xa\}$ with \eqqcn{}{\arr{l}{e_1\otimes e_1+ e_1\otimes e_2 +e_2\otimes e_1+ e_2\otimes e_2 =1\\
(e_s\otimes e_t)( e_{s'}\otimes e_{t'}) = \delta_{s,s'} \delta_{t,t'}e_s\otimes e_t\\}}
Write $e_{s,t}= e_s\otimes e_t$ write $\xa_{u} =e_1\otimes \xa, \xb_u= e_1\otimes \xb , \xa_d = e_2\otimes \xa, \xb_d= e_2\otimes \xb, \xa_l = \xa \otimes e_1, \xb_l = \xb \otimes e_1, \xa_r= \xa \otimes e_2, \xb_r= \xb \otimes e_2  $ so $\check{\GG}$ is an algebra with quiver \eqqcn{}{\check{Q} \xymatrix@C=0.8cm@R0.5cm{
(1,1)\ar@/^/[rr]^{\xa_u}\ar@/_/[rr]_{\xb_u} \ar@/^/[d]^{\xa_l}\ar@/_/[d]_{\xb_l}&& (1,2) \ar@/^/[d]^{\xa_r}\ar@/_/[d]_{\xb_r}\\(2,1)\ar@/^/[rr]^{\xa_d}\ar@/_/[rr]_{\xb_d} && (2,2)
},}
with relations \eqqcn{}{\xa_r\xa_u -\xa_d\xa_l, \xb_r\xb_u -\xb_d\xb_l, \xa_d\xb_l-\xb_d\xa_u, \xb_d\xa_l-\xa_d\xb_l.}
So its quadratic dual quiver $\check{Q} = (\check{Q}_0,\check{Q}_1,\check{\rho})$ has the relation set  \eqqcn{}{\check{\rho}=\{\xa_r\xa_u +\xa_d\xa_l, \xb_r\xb_u +\xb_d\xb_l, \xa_d\xb_l+\xb_d\xa_u, \xb_d\xa_l+\xa_d\xb_l\}.}
The returning arrow quiver is \eqqcn{}{\wtb{Q} \xymatrix@C=0.8cm@R0.5cm{
(1,1)\ar@/^/[rrrrr]^{\xa_u}\ar@/_/[rrrrr]_{\xb_u} \ar@/^/[dd]^{\xa_l}\ar@/_/[dd]_{\xb_l} &&&&& (1,2) \ar@/^/[dd]^{\xa_r}\ar@/_/[dd]_{\xb_r}\\ & \xc_1\ar@/^/[ul]& \xc_2\ar@/^/[ull]& \xc_3\ar[ulll]& \xc_4\ar[ullll]&\\ (2,1)\ar@/^/[rrrrr]^{\xa_d}\ar@/_/[rrrrr]_{\xb_d} &&&&& (2,2)\ar@{-}@/_/[ul]\ar@{-}@/_/[ull]\ar@{-}[ulll]\ar@{-}[ullll]
},}
with $4$ returning arrows $\xc_1= (\xa_r\xa_u)^*,\xc_2= (\xb_r\xb_u)^*,\xc_3= (\xa_d\xb_l)^*$ and $\xc_4= (\xb_d\xa_l)^*$ from $(2,2)$ to $(1,1)$.
The relations are elements in \eqqcn{l}{\arr{l}{R = \{\xa_r\xa_u +\xa_d\xa_l, \xb_r\xb_u +\xb_d\xb_l, \xa_d\xb_l+\xb_d\xa_u, \xb_d\xa_l+\xa_d\xb_l, \\ \quad \xc_1\xb_r, \xc_1\xb_d, \xc_2\xa_r, \xc_2\xa_d,\xb_l\xc_1,\xb_u\xc_1, \xa_l\xc_2,\xa_u\xc_2, \\ \quad \xc_3\xb_d, \xc_3\xa_r, \xc_4\xa_d, \xc_4\xb_r, \xa_l\xc_3, \xb_u\xc_3, \xb_l\xc_4, \xa_u\xc_4\} ,}} and the elements in the set $ \wtb{Q}_4$ of paths of length $4$.
Note that  $\wtb{Q}_4 \not \subset (R)$ and thus the returning arrow quiver $\wtb{Q}$ is not a quadratic one.
This implies that $\check{\GG}$ is not an $2$-slice algebra.
}\end{exa}

{}

\end{document}